\newcommand\R{{\mathbf{R}}}
\newcommand\E{{\mathbf{E}}}
\newcommand\Z{{\mathbf{Z}}}
\newcommand\Ent{{\mathbf{H}}}
\renewcommand\P{{\mathbf{P}}}
\newcommand\eps{\varepsilon}
\newcommand\doubling{\sigma}
\newcommand\dist{\operatorname{dist}}
\newcommand\trans{\operatorname{dist}_{\operatorname{tr}}}
\newcommand\range{\operatorname{range}}
\theoremstyle{plain}
  \newtheorem{theorem}[subsection]{Theorem}
  \newtheorem{proposition}[subsection]{Proposition}
  \newtheorem{lemma}[subsection]{Lemma}
  \newtheorem{corollary}[subsection]{Corollary}
\theoremstyle{remark}
  \newtheorem{remark}[subsection]{Remark}
  \newtheorem{example}[subsection]{Example}
\theoremstyle{definition}
  \newtheorem{definition}[subsection]{Definition}
\begin{document}

\title{Sumset and inverse sumset theory for Shannon entropy}

\author{Terence Tao}
\address{Department of Mathematics, UCLA, Los Angeles CA 90095-1555}
\email{tao@math.ucla.edu}

\begin{abstract}  Let $G = (G,+)$ be an additive group.  The sumset theory of Pl\"unnecke and Ruzsa gives several relations between the size of sumsets $A+B$ of finite sets $A, B$, and related objects such as iterated sumsets $kA$ and difference sets $A-B$, while the inverse sumset theory of Freiman, Ruzsa, and others characterises those finite sets $A$ for which $A+A$ is small.  In this paper we establish analogous results in which the finite set $A \subset G$ is replaced by a discrete random variable $X$ taking values in $G$, and the cardinality $|A|$ is replaced by the Shannon entropy $\Ent(X)$.  In particular, we classify the random variable $X$ which have small doubling in the sense that $\Ent(X_1+X_2) = \Ent(X)+O(1)$ when $X_1,X_2$ are independent copies of $X$, by showing that they factorise as $X = U+Z$ where $U$ is uniformly distributed on a coset progression of bounded rank, and $\Ent(Z) = O(1)$.

When $G$ is torsion-free, we also establish the sharp lower bound $\Ent(X+X) \geq \Ent(X) + \frac{1}{2} \log 2 - o(1)$, where $o(1)$ goes to zero as $\Ent(X) \to \infty$.
\end{abstract}

\maketitle

\section{Introduction}

The purpose of this paper is to establish analogues of the Pl\"unnecke-Ruzsa-Freiman sumset and inverse sumset theory for finite subsets of discrete additive groups, in the setting of discrete random variables in such groups.

\subsection{Sumset and inverse sumset theory: a quick review}

To motivate our results we begin by recalling some of the key results in sumset and inverse sumset theory.  Let $G = (G,+)$ be an additive group.  For any finite non-empty sets $A, B$ in $G$, we define the sumset 
$$ A+B := \{a+b: a \in A, b \in B \}$$
and difference set
$$ A-B := \{a-b: a \in A, b \in B \}$$
and the iterated sumsets $2A = A+A$, $3A=A+A+A$, etc.  We use $|A|$ to denote the cardinality of a finite set $A$.

We have the trivial bounds
\begin{equation}\label{atriv}
|A|, |B| \leq |A+B| \leq |A| |B|
\end{equation}
and similarly for $A-B$.  In particular, we see that the \emph{doubling constant}
$$ \doubling[A] := \frac{|A+A|}{|A|}$$
is at least one.  It is easy to see that this doubling constant is precisely one if and only if $A$ is the translate of a finite subgroup of $G$.  Intuitively, one thus expects that if the doubling constant of $A$ is bounded, then $A$ should in some sense \emph{behave like} a translate of a finite subgroup; this is one of the main objectives of the Pl\"unnecke-Ruzsa sumset theory.  One is furthermore interested in \emph{classifying} those sets $A$ of small doubling constant; this is the main objectives of the Freiman-Ruzsa inverse sumset theory.

We now give some sample results in this theory.  One of the simplest is the \emph{Ruzsa triangle inequality}
\begin{equation}\label{ac}
 |A-C| \leq \frac{|A-B| |B-C|}{|B|},
\end{equation}
valid for all non-empty finite subsets $A,B,C$ of $G$ (see e.g. \cite{ruzsa}, \cite[Lemma 2.6]{tao-vu}).  In a similar spirit, one has
\begin{equation}\label{ac2}
|A+B| \leq \frac{|A-B|^3}{|A| |B|}
\end{equation}
(see e.g. \cite{ruzsa}, \cite[Corollary 2.12]{tao-vu}).  If $\doubling[A] \leq K$, then one has the \emph{Pl\"unnecke-Ruzsa inequalities}
\begin{equation}\label{ac3}
|nA-mA| \leq K^{n+m} |A|
\end{equation}
for all $n,m \geq 1$ (see e.g. \cite{ruzsa}, \cite[Corollary 6.28]{tao-vu}).  We refer the reader to \cite{ruzsa} or \cite{tao-vu} for further details of these and related estimates.

Another basic result is the \emph{Balog-Szemer\'edi-Gowers lemma} \cite{balog}, \cite{gowers-4}, which involves \emph{partial sumsets}
$$ A \stackrel{E}{+} B := \{ a+b: (a,b) \in E \}$$
for any subset $E$ of $A \times B$:

\begin{lemma}[Balog-Szemer\'edi-Gowers lemma]\label{bsg}  Suppose that $A, B$ are non-empty finite subsets of an additive group $G$, and let $E \subset A \times B$ be such that $|E| \geq |A| |B|/K$ and $|A \stackrel{E}{+} B| \leq K |A|^{1/2} |B|^{1/2}$ for some $K \geq 1$.  Then there exists subsets $A' \subset A, B' \subset B$ with $|A'| \gg |A|/K$, $|B'| \gg |B|/K$ such that $|A'+B'| \ll K^7 |A'|^{1/2} |B|^{1/2}$.
\end{lemma}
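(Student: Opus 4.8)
The plan is to separate a purely graph-theoretic fact about the bipartite graph $E$ from an additive step that converts that fact, via the identity
\[
a+b = (a+b'') - (a''+b'') + (a''+b),
\]
into a genuine sumset bound.

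\textbf{Step 1 (graph lemma).} First I would establish the following statement about $E \subset A \times B$: if $|E| \ge |A||B|/K$, then there exist $A' \subset A$ and $B' \subset B$ with $|A'| \gg |A|/K$ and $|B'| \gg |B|/K$ such that every pair $(a,b) \in A' \times B'$ is joined by at least $c\,|A||B|/K^{O(1)}$ paths of length three in $E$, i.e.\ pairs $(b'',a'')$ with $a \sim b''$, $b'' \sim a''$ and $a'' \sim b$ (all edges in $E$), where $c$ is an absolute constant. This is the usual ``dependent random choice'' argument: discard from $A$ the vertices of degree $< \tfrac12 |B|/K$ (losing at most half the edges); choose a random vertex $b_\ast \in B$ and let $A'$ be its neighbourhood among the surviving vertices; a first- and second-moment computation shows that with positive probability $|A'| \gg |A|/K$ while the number of pairs $(a_1,a_2) \in A' \times A'$ with fewer than $\gg |B|/K^{O(1)}$ common neighbours in $B$ is a small fraction of $|A'|^2$; deleting the few ``deficient'' vertices of $A'$, and then deleting from $B$ the vertices adjacent to fewer than $\gg |A|/K^{O(1)}$ of the remaining vertices, yields the desired $A', B'$, since for $a\in A'$, $b\in B'$ the number of length-three paths equals $\sum_{a''\,:\,a''\sim b}|N(a)\cap N(a'')| \gg |A||B|/K^{O(1)}$.

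\textbf{Step 2 (additive step).} Put $S := A \stackrel{E}{+} B$, so $|S| \le K|A|^{1/2}|B|^{1/2}$ by hypothesis. For each $s \in A'+B'$ fix a single representation $s = a+b$ with $(a,b)\in A'\times B'$, and call it $\phi(s)$. Each of the $\gg |A||B|/K^{O(1)}$ length-three paths $a \sim b'' \sim a'' \sim b$ produced by Step 1 gives
\[
s = (a+b'') - (a''+b'') + (a''+b) \in S - S + S,
\]
and the assignment $(s,(b'',a'')) \mapsto (a+b'',\,a''+b'',\,a''+b) \in S^3$ is injective: from a triple $(x,y,z)$ one recovers $s = x - y + z$, hence $(a,b) = \phi(s)$, hence $b'' = x - a$ and $a'' = z - b$. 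Summing over $s \in A'+B'$,
\[
|A'+B'|\cdot c\,|A||B|/K^{O(1)} \le |S|^3 \le K^3|A|^{3/2}|B|^{3/2},
\]
so $|A'+B'| \ll K^{O(1)}|A|^{1/2}|B|^{1/2}$. Finally $|A| \ll K|A'|$ from Step 1 gives $|A|^{1/2}|B|^{1/2} \ll K^{1/2}|A'|^{1/2}|B|^{1/2}$, and collecting the powers of $K$ lost in Steps 1--2 yields $|A'+B'| \ll K^7 |A'|^{1/2}|B|^{1/2}$, the unspecified constants being absorbed into the $\gg,\ll$ notation.

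\textbf{Main obstacle.} Step 2 is essentially the Ruzsa-type observation that a set with many representations inside $S-S+S$ has size at most $|S|^3$ divided by the minimum representation count, and is routine once Step 1 is available. The real work is Step 1: it is easy to find a dense $A'$ whose \emph{typical} pairs have many common neighbours, but one must pass to subsets so that \emph{every} pair of $A'\times B'$ is well connected while keeping $|A'| \gg |A|/K$ and $|B'| \gg |B|/K$; the quantitative bookkeeping of how many vertices and pairs have to be deleted, and in particular making sure the surviving path count stays above the threshold needed for the final exponent $7$, is the delicate point.
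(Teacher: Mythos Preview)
The paper does not actually prove Lemma~\ref{bsg}: it simply cites \cite[Theorem~2.29]{tao-vu}. Your sketch is the standard proof of that theorem---the dependent-random-choice graph lemma producing many paths of length three, followed by the injective map into $S^3$ via the identity $a+b=(a+b'')-(a''+b'')+(a''+b)$---and is correct in outline, so there is nothing to compare.
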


Here and in the sequel, we use $X \ll Y$ or $X = O(Y)$ to denote the estimate $|X| \leq CY$ for some absolute constant $Y$, and $X \asymp Y$ as shorthand for $X \ll Y \ll X$.  If we need the implied constant $C$ to depend in a parameter, we will indicate this by subscripts, thus for instance $O_K(1)$ denotes a quantity bounded in magnitude by $C_K$ for some $C_K$ depending only on $K$.

\begin{proof}  See \cite[Theorem 2.29]{tao-vu}.
\end{proof}

Now we turn to inverse theorems.  A basic concept here is that of a \emph{coset progression}, which unifies the concept of a coset and of an arithmetic progression.

\begin{definition}[Coset progression]\cite{gr-4}  A \emph{coset progression} in an additive group is any set of the form $H+P$, where $H$ is a finite subgroup of $G$, and $P$ is a generalised arithmetic progression, i.e. a set of the form
$$ P := \{ x + n_1 r_1 + \ldots + n_d r_d: n_1 \in [0,N_1),\ldots,n_d \in [0,N_d) \}$$
where $d \geq 0$ is an integer, $x,r_1,\ldots,r_d$ lie in $G$, $N_1,\ldots,N_d \geq 1$ are integers, and $[0,N) := \{0,\ldots,N-1\}$.  We call $d$ the \emph{rank} of the progression.  We say that the coset progression is \emph{$t$-proper} for some $t>0$ if the sums $h+x+n_1r_1+\ldots+n_d r_d$ for $h \in H$ and $n_i \in [0,tN_i)$ are distinct, and \emph{proper} if it is $1$-proper.
\end{definition}

It is easy to see that a coset progression of rank $d$ has doubling constant at most $2^d$.  More generally, if $A$ is a subset of a coset progression $H+P$ with $|A| \geq |H+P|/K$, then $A$ has doubling constant at most $2^d K$.  The following Freiman-type theorem, first proven in \cite{gr-4}, establishes a partial converse to this claim:

\begin{theorem}[Green-Ruzsa Freiman theorem]\label{grf} Let $G$ be an additive group, and let $A \subset G$ be a finite non-empty set with $\doubling[A] \leq K$ for some $K \geq 1$.  Then there exists a coset progression $H+P$ of rank $O(K)$ and size $|H+P| \leq \exp( O( K^{O(1)})) |A|$ such that $A \subset H+P$.
\end{theorem}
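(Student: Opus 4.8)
The plan is to follow Ruzsa's proof of Freiman's theorem over $\Z$, inserting the modifications of Green and Ruzsa \cite{gr-4} needed to accommodate torsion. Since $A$ is finite we may replace $G$ by the finitely generated subgroup $\langle A\rangle$ and translate so that $0\in A$; the only consequence of $\doubling[A]\le K$ that will be used is the Pl\"unnecke--Ruzsa inequality \eqref{ac3}, which gives $|kA-kA|\le K^{2k}|A|$ for all $k\ge 1$, and in particular $|8A-8A|\le K^{16}|A|$.

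First I would apply a Ruzsa-type modelling lemma: using the bound on $|8A-8A|$ one produces a subset $A'\subseteq A$ with $|A'|\gg|A|$ together with a Freiman $8$-isomorphism (a map which, with its inverse, respects all additive relations in at most $8$ terms) from $A'$ onto a subset $B$ of a \emph{finite} abelian group $Z$ with $|Z|\ll K^{O(1)}|A|$. Over $\Z$ one takes $Z=\Z/N\Z$ for a suitable $N$; in general one writes $\langle A\rangle\cong\Z^{d}\oplus F$ and quotients by a finite-index subgroup chosen to be injective on the image of $8A'-8A'$. The point is that $B$ then has density $\gg K^{-O(1)}$ in the finite group $Z$, so Fourier analysis on $Z$ becomes available, and any coset progression containing $B$ in $Z$ will pull back under the Freiman isomorphism to one of the same rank and comparable size containing $A'$ in $G$.

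The crux of the argument --- and the step I expect to be the main obstacle --- is a Bogolyubov--Ruzsa statement: $2B-2B$ contains a proper coset progression $H+P$ of rank $O(K^{O(1)})$ and size $|H+P|\ge\exp(-O(K^{O(1)}))\,|Z|$. One considers the function $1_B*1_{-B}*1_B*1_{-B}$, which is supported on $B+B-B-B=2B-2B$ and has nonnegative Fourier coefficients $|\widehat{1_B}|^4$; the set $S$ of characters $\xi$ at which $|\widehat{1_B}(\xi)|$ is comparable to its maximum $|B|$ has cardinality $O(K^{O(1)})$ by Parseval (and only $O(\log K)$ if Chang's spectral lemma is invoked), and one deduces that $2B-2B$ contains the Bohr set $\{x\in Z:\ |1-\xi(x)|\le\tfrac14\ \text{for all}\ \xi\in S\}$. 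In the cyclic case this Bohr set is essentially an arithmetic progression, but over a general finite abelian group it need not be; extracting from it a coset progression of rank at most $|S|$ and density $\exp(-O(|S|\log|S|))$ requires a geometry-of-numbers argument --- Minkowski's second theorem applied to the lattice generated by the characters in $S$, together with the decomposition of $Z$ into cyclic factors to separate the genuine-subgroup part $H$ from the progression part $P$. This is the technical heart of \cite{gr-4}, and the point at which the general group is genuinely harder than $\Z$.

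It remains to cover $B$ and to control the rank. Since $|B+(2B-2B)|\le|3B-2B|\le K^{O(1)}|B|\le\exp(O(K^{O(1)}))\,|H+P|$, the Ruzsa covering lemma --- if $|C+D|\le L|D|$ then $C$ lies in at most $L$ translates of $D-D$ --- produces a set $X$ with $|X|\le\exp(O(K^{O(1)}))$ and $B\subseteq X+H+(P-P)$. Carried out crudely this would make $X$ contribute its whole cardinality to the rank; the point, handled by an iterated covering argument in the style of Chang and of Green--Ruzsa, is to keep this contribution bounded, so that with sufficient care $B$ --- and then, after one further application of the covering lemma to pass from $A'$ back to $A$ --- is contained in a coset progression of rank $O(K)$ and size $\exp(O(K^{O(1)}))|A|$. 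Pulling this progression back through the Freiman $8$-isomorphism, which maps coset progressions to coset progressions of the same rank, then yields the statement.
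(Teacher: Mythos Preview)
The paper does not prove this theorem at all: its entire proof is the one-line citation ``See \cite[Theorem 5.44]{tao-vu}.'' The Green--Ruzsa theorem is quoted as background for the combinatorial inverse sumset theory, and the paper's own contribution lies elsewhere (the entropy analogues in Theorem \ref{iest}). So there is no in-paper argument to compare against.

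That said, your outline is a faithful sketch of the actual Green--Ruzsa proof as it appears in \cite{gr-4} and in \cite[Chapter 5]{tao-vu}: Ruzsa modelling to pass to a dense subset of a finite abelian group, a Bogolyubov--Ruzsa step (large spectrum, Bohr set, then geometry of numbers to extract a coset progression), and an iterated Chang-type covering argument to contain $B$ and then $A$. The places you flag as delicate --- the geometry-of-numbers extraction of $H+P$ from a Bohr set in a general finite abelian group, and the need for iterated rather than one-shot covering to keep the rank from blowing up --- are exactly the technical points where the proof lives. One caution: the phrase ``with sufficient care \ldots rank $O(K)$'' is doing real work that your sketch does not supply; the naive Bogolyubov step gives a spectrum of size $K^{O(1)}$, and bringing the rank down to the linear bound stated here requires Chang's spectral lemma together with the iterated covering, which you mention but do not carry out. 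For the purposes of this paper, though, the citation suffices.
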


\begin{proof} See \cite[Theorem 5.44]{tao-vu}.
\end{proof}

\subsection{Shannon entropy}

We now turn to the concept of Shannon entropy.

\begin{definition}[Shannon entropy]
Let $A$ be a (discrete) set.  Let $\Pr_c(A)$ denote the set of all probability measures on $A$ with compact (i.e. finite) support, or equivalently a function $p: A \to [0,1]$ which is non-zero for only finitely many values, and adds up to one.  Define an \emph{$A$-random variable} to be a random variable $X$ taking values in a finite subset $\range(X) := \{ x \in A: \P(x \in X) \neq 0 \}$, thus the distribution function $p_X(x) := \P( x \in X )$ of $X$ lies in $\Pr_c(A)$.  We write $X \equiv Y$ if $p_X = p_Y$, i.e. if $X, Y$ have the same distribution.  We refer to random variables taking values in a finite set as \emph{discrete random variables}.

The \emph{Shannon entropy} $\Ent(p)$ of a probability distribution $p \in \Pr_c(A)$ is given by the formula
\begin{equation}\label{hbox}
 \Ent(p) := \sum_{x\in A} F( p(x) )
 \end{equation}
where $F: \R^+ \to \R^+$ is the function 
\begin{equation}\label{F-def}
F(x) := x \log \frac{1}{x}
\end{equation}
with the convention that $F(0)=0$.  Given an $A$-random variable $X$, we then define $\Ent(X) := \Ent(p_X)$.  
\end{definition}

The basic theory of Shannon entropy is reviewed in Appendix \ref{shannon-sec}.  For now, we just remark that
\begin{equation}\label{jens}
0 \leq \Ent(X) \leq \log |\range(X)|
\end{equation}
for any discrete random variable $X$, with equality in the former inequality if and only if $X$ is deterministic (i.e. it only takes on one value), and equality in the latter inequality if and only if it is uniformly distributed in $\range(X)$; see Lemma \ref{jensen} for a more precise statement. 
In particular, a \emph{boolean} random variable (i.e. one which takes values in $\{0,1\}$) has entropy at most $\log 2$ with our choice of normalisation of entropy. One can view $G$-random variables as a generalisation of the concept of a finite non-empty subset of $G$, in which the weight (or probability) assigned to each element in the range is not necessarily uniform.

Given two $G$-random variables $X, Y$ (not necessarily independent), their sum $X+Y$ and $X-Y$ are also $G$-random variables, and $\range(X \pm Y) \subset \range(X) \pm \range(Y)$. From standard entropy inequalities one has the trivial upper bound
\begin{equation}\label{ent-sum}
 \Ent(X \pm Y) \leq \Ent(X) + \Ent(Y)
\end{equation}
while if $X$ and $Y$ are independent, one also has the trivial lower bound
\begin{equation}\label{ent-lower}
\Ent(X), \Ent(Y) \leq \Ent(X \pm Y);
\end{equation}
see Lemma \ref{triv}.  The lower bound \eqref{ent-lower} can of course fail if the independence hypothesis is dropped; for instance one clearly has $\Ent(X-X)=0$.

We can define the \emph{doubling constant} $\doubling[X]$ of a $G$-random variable by the formula
$$ \doubling[X] := \exp( \Ent(X_1+X_2) - \Ent(X) )$$
where $X_1,X_2$ are independent copies of $X$, thus $\doubling[X] \geq 1$ by \eqref{ent-lower}.  This quantity is related, but not identical, to the doubling constant $\doubling[A]$ of a set; indeed, from \eqref{jens} we see that 
\begin{equation}\label{doubla}
\doubling[X] \leq \doubling[A]
\end{equation}
whenever $X$ is uniformly distributed on a finite non-empty subset $A$ of $G$.  However, the doubling constant of a random variable can be significantly smaller than that of its range.  For instance, let $A$ be an interval $[0,N)$ together with $\sqrt{N}$ (say) other integers in general position, where $N$ is large.  Then the doubling constant of $A$ is about $\sqrt{N}$, but the uniform distribution on $A$ has doubling constant $O(1)$.  Thus we see that a small amount of ``noise'' (such as the $\sqrt{N}$ integers in general position) can significantly increase the doubling constant of a set, but have only a negligible impact on the doubling constant of a random variable.  Heuristically, one can thus think of entropy sumset theory as a ``noise-tolerant'' analogue of combinatorial sumset theory.

The analogue of a partial sumset $A \stackrel{E}{+} B$ here is the concept of a sum $X+Y$ of \emph{non-independent} random variables $X, Y$.  For instance, if $E \subset A \times B$ is a non-empty set, and $(X,Y)$ is the random variable chosen uniformly at random from $E$, then $X+Y$ is a random variable ranging in $A \stackrel{E}{+} B$.

There are several ways to define the \emph{distance} between two $G$-random variables $X, Y$ (or their associated distributions $p_X, p_Y$).  For instance, we can define their \emph{total variation distance}
\begin{equation}\label{tv-def}
 \dist_{TV}( X, Y ) = \dist_{TV}(p_X, p_Y) := \sum_{x \in G} |p_X(x) - p_Y(x)|;
\end{equation}
this is clearly a metric on $\Pr_c(G)$.  Another useful distance is the \emph{Rusza distance}
\begin{equation}\label{ruzsa-def}
 \dist_R( X, Y ) = \dist_R(p_X, p_Y) := \Ent( X' - Y' ) - \frac{1}{2} \Ent(X') - \frac{1}{2} \Ent(Y')
\end{equation}
where $X', Y'$ are independent copies of $X, Y$ respectively; this is not quite a metric (in particular, $\dist_R(X,X) > 0$ in general), but does obey the triangle inequality as we will see in Theorem \ref{ese} below.  A third distance of importance to us is the following \emph{transport distance}:

\begin{definition}[Transport metric]\label{trans-def}  Let $G$ be an additive group, and let $X, Y$ be $G$-random variables.  We define the \emph{entropy transport distance} $\trans(X,Y)$ from $X$ to $Y$ to be the infimum of $\Ent(Z)$, where $Z$ ranges over all $G$-random variables (not necessarily independent of $X$) such that $X+Z \equiv Y$.
\end{definition}

Observe that $\trans(X,Y)=0$ if and only if $Y$ has the distribution of a translate $X+c$ of $X$.  Up to this equivalence, it is easy to see that the transport distance is indeed a metric.  The notion of transport metric depends only on the distribution, so by abuse of notation we may define $\trans(p_X,p_Y) := \trans(X,Y)$.  The notion of two random variables being close in transport metric is roughly analogous to the notion of (mutual) \emph{$K$-control} of one set by another, introduced in \cite{tao-solvable}.

\begin{example} Let $N$ be a large even integer, let $X$ be the uniform distribution on $[0,N)$, and let $Y$ be the uniform distribution on the even numbers in $[0,N)$.  Then the total variation distance $\dist_{TV}(X,Y)$ is quite large (comparable to its maximal value of $2$).  On the other hand, the Ruzsa distance is quite small (of size $O(1)$).  The transport distance is also of size $O(1)$; indeed, one can transport $Y$ to $X$ by adding a uniform boolean variable $Z \in \{0,1\}$ which is independent of $Y$; conversely, one can transport $X$ to $Y$ by subtracting off the parity bit $Z$ of $X$ (which is \emph{not} independent of $X$).  In fact, the uniform distribution on any dense subset of $[0,N)$ lies within $O(1)$ of $X$ in the transport distance, although this is not as obvious to see; see Corollary \ref{coset-prog} below.
\end{example}

The Ruzsa distance, doubling constant, and transport distance interact well with each other.  For instance, we have the identity
\begin{equation}\label{dubdub}
 \doubling[X] = \exp( \dist_R( X, -X ) )
\end{equation}
and the Lipschitz type properties
\begin{equation}\label{rrt}
 |\dist_R(X',Y') - \dist_R(X,Y)| \leq \frac{3}{2} ( \trans(X,X') + \trans(Y,Y') )
\end{equation}
for any $G$-random variables $X, Y, X', Y'$, as can be seen by several applications of \eqref{ent-sum}.  In particular
\begin{equation}\label{doubtrans}
 |\log \doubling[X] - \log \doubling[X']| \leq 3 \trans(X,X').
\end{equation}
Thus we see that random variables which are close in transport distance are essentially equivalent from the perspective of their sumset theory.

\subsection{Main results}

We can now state our main results.  We begin with some sumset estimates, analogous to \eqref{ac}, \eqref{ac2}, \eqref{ac3}:

\begin{theorem}[Entropy sumset estimates]\label{ese}  Let $G$ be an additive group, and let $X, Y, Z$ be $G$-random variables.
\begin{itemize}
\item (Ruzsa triangle inequality)  We have
\begin{equation}\label{ruzsa-triangle}
 \dist_R(X,Z) \leq \dist_R(X,Y) + \dist_R(Y,Z).
\end{equation}
\item (Sum-difference inequality) One has
\begin{equation}\label{condit}
 \dist_R(X,-Y) \leq 3 \dist_R(X,Y).
\end{equation}
\item (Weak Pl\"unnecke-Ruzsa inequality) If $X_1,\ldots,X_n,X'_1,\ldots,X'_m$ are independent copies of $X$ for some integers $n,m \geq 0$, then
\begin{equation}\label{jest}
\Ent(X_1+\ldots+X_n-X'_1-\ldots-X'_m) \leq \Ent(X) + O( (n+m) \log \doubling[X] ).
\end{equation}
\end{itemize}
\end{theorem}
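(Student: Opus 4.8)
The plan is to prove the three parts in sequence, reducing each to standard entropy inequalities from Appendix \ref{shannon-sec} (submodularity, conditional entropy identities, and \eqref{ent-sum}--\eqref{ent-lower}), with the first part doing most of the work. For the \emph{Ruzsa triangle inequality} \eqref{ruzsa-triangle}, I would take independent copies $X', Y', Z'$ and try to bound $\Ent(X'-Z')$ from above using $\Ent(X'-Y')$ and $\Ent(Y'-Z')$. The key trick (the entropy analogue of the set-theoretic Ruzsa argument) is to consider the triple $(X'-Y', Y'-Z')$ together with the auxiliary variable $Y'$: one observes that the map $(a,b,c)\mapsto(a-b, b-c)$ from $(X',Y',Z')$-space lets us recover $X'-Z'$ as the sum of the two coordinates, so $\Ent(X'-Z') \le \Ent(X'-Y', Y'-Z') \le \Ent(X'-Y', Y'-Z', Y')$, and then one expands this last joint entropy two ways. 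Writing $W=(X',Y',Z')$, we have $\Ent(X'-Y', Y'-Z', Y') = \Ent(X',Y',Z') = \Ent(X')+\Ent(Y')+\Ent(Z')$ by independence; on the other hand $\Ent(X'-Y', Y'-Z', Y') \le \Ent(X'-Y') + \Ent(Y'-Z', Y') \le \Ent(X'-Y') + \Ent(Y'-Z') + \Ent(Y')$ by subadditivity. Combining and cancelling $\Ent(Y')$, then subtracting $\tfrac12\Ent(X') + \tfrac12\Ent(Y') + \tfrac12\Ent(Y') + \tfrac12\Ent(Z')$ from both sides and regrouping according to the definition \eqref{ruzsa-def}, yields exactly \eqref{ruzsa-triangle}. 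The bookkeeping with the $\tfrac12$ weights is the only slightly fiddly point.

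For the \emph{sum-difference inequality} \eqref{condit}, the goal is $\dist_R(X,-Y) \le 3\dist_R(X,Y)$, i.e. $\Ent(X'+Y') - \tfrac12\Ent(X') - \tfrac12\Ent(Y') \le 3(\Ent(X'-Y') - \tfrac12\Ent(X') - \tfrac12\Ent(Y'))$, which simplifies to $\Ent(X'+Y') \le 3\Ent(X'-Y') - \Ent(X') - \Ent(Y')$ — the entropy analogue of \eqref{ac2}. I would prove this by the same kind of joint-entropy expansion: introduce three independent copies and a clever change of variables that exhibits $X'+Y'$ inside a joint distribution whose entropy is controlled by three copies of a difference variable. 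Concretely, following the Ruzsa-calculus template, one takes independent $X_1, X_2$ (copies of $X$) and $Y'$ (copy of $Y$), considers the pair $(X_1 - Y', X_2 - Y')$, notes that its difference is $X_1 - X_2$ while combining with $X_1+X_2$ or with $Y'$ recovers the relevant sum, and plays submodularity of $\Ent(X_1-Y', X_2-Y', Y')$ against its two expansions. I expect the precise choice of auxiliary variable here to be the main obstacle: unlike the triangle inequality, getting the coefficient exactly $3$ requires choosing the conditioning variable so that all cross terms cancel, and one may need to symmetrise over $X_1 \leftrightarrow X_2$ or pass through $\dist_R$ of $X$ with a fresh independent copy as an intermediary. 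A fallback is to first establish $\dist_R(X,-X) \le$ (small multiple of) $\dist_R(X,X)$-type statements and then bootstrap via the triangle inequality \eqref{ruzsa-triangle} already proved, at the cost of possibly a worse constant, but the claim as stated with constant $3$ should come out of the direct three-variable computation.

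For the \emph{weak Pl\"unnecke--Ruzsa inequality} \eqref{jest}, the strategy is pure induction on $n+m$ using the first two parts. The base case $n+m \le 1$ is \eqref{jens}/\eqref{ent-lower} (or trivial). For the inductive step, note that by the triangle inequality \eqref{ruzsa-triangle} applied repeatedly, $\dist_R$ between a sum of $j$ copies of $X$ and a sum of $k$ copies of $-X$ (all independent) is at most $(j+k-1)$ times a single $\dist_R(X, \pm X)$, and by \eqref{dubdub} and \eqref{condit} each such single Ruzsa distance is $O(\log\doubling[X])$. Then one converts the Ruzsa-distance bound back into an entropy bound: from the definition \eqref{ruzsa-def}, $\Ent(S - T) = \dist_R(S,T) + \tfrac12\Ent(S) + \tfrac12\Ent(T)$ where $S = X_1+\cdots+X_n$ and $T = X'_1+\cdots+X'_m$, and one controls $\Ent(S), \Ent(T)$ themselves by the same induction (each is a sum of fewer copies). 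Carefully, one wants the final bound to be $\Ent(X) + O((n+m)\log\doubling[X])$ rather than something like $(n+m)\Ent(X)$, so the induction should be set up to track the deviation $\Ent(X_1+\cdots+X_n - X'_1 - \cdots - X'_m) - \Ent(X)$ directly and show it grows additively; the cleanest route is to prove the one-step estimate $\Ent(X + X') \le \Ent(X) + O(\log\doubling[X])$ for $X'$ an independent copy of the current sum appropriately, i.e. a submodularity/Ruzsa-triangle argument showing $\Ent(A+B) \le \Ent(A) + \dist_R(A,-B) + (\text{lower-order})$, and iterate. The main obstacle in this part is ensuring the constant in the $O(\cdot)$ stays absolute and does not secretly accumulate a factor growing with $n+m$ beyond the stated linear one — this is handled by always comparing against a \emph{single} copy of $X$ via the triangle inequality rather than chaining pairwise.
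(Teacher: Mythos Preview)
Your argument for the Ruzsa triangle inequality does not go through as written. From your chain
\[
\Ent(X'-Z') \le \Ent(X'-Y',\,Y'-Z',\,Y') \le \Ent(X'-Y') + \Ent(Y'-Z') + \Ent(Y')
\]
you can only conclude $\Ent(X'-Z') \le \Ent(X'-Y') + \Ent(Y'-Z') + \Ent(Y')$, with the \emph{wrong sign} on $\Ent(Y')$; and combining with the equality $\Ent(X'-Y',Y'-Z',Y')=\Ent(X')+\Ent(Y')+\Ent(Z')$ only yields the trivial $\Ent(X')+\Ent(Z')\le \Ent(X'-Y')+\Ent(Y'-Z')$. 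Neither gives $\Ent(X'-Z')+\Ent(Y')\le \Ent(X'-Y')+\Ent(Y'-Z')$. The missing ingredient is precisely the submodularity inequality (Lemma~\ref{submodularity}): since $(X'-Y',Y'-Z')$ and $(X',Z')$ each determine $X'-Z'$ while jointly determining $(X',Y',Z')$, submodularity gives
\[
\Ent(X',Y',Z') + \Ent(X'-Z') \le \Ent(X'-Y',Y'-Z') + \Ent(X',Z'),
\]
and now independence and subadditivity rearrange this to the desired bound. This is exactly the paper's argument; the point is that subadditivity alone cannot produce the $-\Ent(Y')$ gain.

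For the sum--difference inequality, your sketch with three independent variables $X_1,X_2,Y'$ and the pair $(X_1-Y',X_2-Y')$ will not reach the constant~$3$; indeed that setup only sees two copies of the difference law. The paper's proof introduces a genuinely different device you are missing: \emph{conditionally independent trials} $(X_1,Y_1),(X_2,Y_2)$ of $(X,Y)$ relative to $X-Y$ (so that $X_1-Y_1=X_2-Y_2$), together with a third independent copy $(X_3,Y_3)$, and then exploits the algebraic identity
\[
X_3+Y_3 = (X_3-Y_2) - (X_1-Y_3) + X_2 + Y_1
\]
inside a submodularity inequality. The conditional coupling is what makes three difference-type terms appear against one sum.

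For the weak Pl\"unnecke--Ruzsa bound, your proposed induction via the triangle inequality runs into a circularity you have not addressed: chaining $\dist_R$ through a single copy of $X$ gives estimates like $T_{n,0}\le T_{n-1,1}+\log\doubling[X]$ and $T_{n-1,1}\le T_{n,0}+\log\doubling[X]$ (writing $T_{j,k}:=\Ent(X_1+\cdots+X_j-X'_1-\cdots-X'_k)-\Ent(X)$), which combine to nothing. The paper avoids this by a direct argument: for independent copies $(X_i,Y_i)$, $0\le i\le n$, with $S_i=X_i+Y_i$, the cyclic identity $S_0+\cdots+S_n=(Y_0+X_1)+\cdots+(Y_{n-1}+X_n)+(Y_n+X_0)$ is fed into submodularity to obtain $\Ent(S_0+\cdots+S_n)\le (2n+1)\Ent(X+Y)-n\Ent(X)-n\Ent(Y)$ in one stroke, and then \eqref{entpm} handles the minus signs.
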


We prove these estimates in Section \ref{ese-sec}.  The estimate \eqref{jest} loses an absolute constant in comparision to (the logarithm of) \eqref{ac3}.  This is because we do not know how to adapt the graph-theoretic Pl\"unnecke inequality \cite{plun} to the entropy setting, and so must rely instead on some weaker but less deep arguments in \cite{tao-vu} to establish results analogous to \eqref{ac3} instead.

The analogue of the Balog-Szemer\'edi theorem is a little more technical to state, requiring the concept of \emph{conditional entropy} and \emph{conditionally independent trials}, and will be deferred to Section \ref{balog-sec}.

We turn now to an inverse theorem for entropy in the spirit of Theorem \ref{grf}.

\begin{theorem}[Inverse sumset theorem]\label{iest}  Let $G$ be an additive group, and let $X$ be a $G$-random variable.
\begin{itemize}
\item[(i)] $\doubling[X] = 1$ if and only if $X$ is the uniform distribution on a coset of a finite subgroup of $G$.
\item[(ii)] If $\doubling[X] \leq K$, then there exists a coset progression $H+P$ of rank $O_K(1)$ such that $\trans(X,U) \ll_K 1$, where $U$ is the uniform distribution on $H+P$.
\item[(iii)] If $\dist_R(X,Y) \leq K$, then $\trans(X,Y) \ll_K 1$ and $\doubling[X] \leq K^4$.
\end{itemize}
\end{theorem}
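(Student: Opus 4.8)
We treat the three parts in the order (i), (iii), (ii), the last being by far the most substantial.

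\textbf{Part (i).} The ``only if'' direction is a direct computation: if $X$ is uniform on a coset $g+H$ of a finite subgroup $H$, then $X_1+X_2=2g+(h_1+h_2)$ with $h_1,h_2$ independent uniform on $H$, and since the convolution of the uniform measure on $H$ with itself is again uniform on $H$, $X_1+X_2$ is uniform on $2g+H$, a set of cardinality $|H|$; hence $\Ent(X_1+X_2)=\log|H|=\Ent(X)$ and $\doubling[X]=1$. For the converse, from $\Ent(X_1+X_2)=\Ent(X)$ and the chain $\Ent(X_1+X_2)\ge\Ent(X_1+X_2\mid X_2)=\Ent(X_1)=\Ent(X)$ of standard entropy inequalities we deduce equality in the conditioning step, i.e. $X_1+X_2$ is independent of $X_2$; thus the translates $p_X(\cdot-a)$ agree for all $a\in A:=\range(X)$, so $p_X$ is invariant under the subgroup $H:=\langle A-A\rangle$. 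Finiteness of the support of $p_X$ forces $H$ to be finite, and an $H$-invariant probability measure supported in the single coset $a_0+H$ (which contains $A$) must be uniform on that coset.

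\textbf{Part (iii).} The doubling bound is purely formal: combining the Ruzsa triangle inequality \eqref{ruzsa-triangle}, the sum--difference inequality \eqref{condit}, the symmetries of $\dist_R$, and the identity \eqref{dubdub} gives $\log\doubling[X]=\dist_R(X,-X)\le\dist_R(X,-Y)+\dist_R(-Y,-X)\le 4\,\dist_R(X,Y)$, which yields the stated bound on $\doubling[X]$. The transport bound $\trans(X,Y)\ll_K 1$ is the content of an \emph{entropic Ruzsa covering lemma}: I would first note that $\dist_R(X,Y)\le K$ forces $|\Ent X-\Ent Y|\ll_K 1$ (via \eqref{ent-lower}), and that, writing $S:=X'-Y'$ for independent copies $X',Y'$, submodularity of Shannon entropy gives $I(X';S),I(Y';S)\ll_K 1$; then I would exploit this near-independence to build a coupling of $X$ and $Y$ in which $Y-X$ takes boundedly many values outside a bounded-entropy event, by a greedy/pigeonhole selection among the conditional laws $(X'\mid S=s)$ and $(Y'\mid S=s)$, which differ from one another by the constant $s$. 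I expect the bookkeeping here---choosing $s$ so that the two conditional laws are \emph{simultaneously} within $O_K(1)$ transport distance of $X$ and of $Y$---to be the delicate point.

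\textbf{Part (ii).} We may assume $\Ent(X)$ large. The plan has four stages. \emph{(a) Flattening.} Using a stability version of the inequality $\Ent(X_1+X_2)\ge\Ent(X)$, I would show that $\doubling[X]\le K$ forces $p_X$ to be essentially flat on a set $A$ with $|A|\asymp_K e^{\Ent X}$ carrying all but an $O_K(1)$-entropy (indeed $\ll_K 1$ in measure) portion of the mass; this reduces matters, at a cost of $\ll_K 1$ in transport distance, to the case $X=U_A$, the uniform distribution on $A$. \emph{(b) From entropy doubling to combinatorial doubling.} Now $\doubling[U_A]\ll_K 1$ by \eqref{doubtrans}; expanding $\Ent(U_A+U_A)=\log|A|+\log\doubling[U_A]$ in terms of the representation function of $A+A$ and applying Jensen's inequality shows the additive energy of $A$ satisfies $E(A)\gg_K|A|^3$, whence the Balog--Szemer\'edi--Gowers lemma (Lemma \ref{bsg}) produces $A'\subset A$ with $|A'|\gg_K|A|$ and $|A'+A'|\ll_K|A'|$. \emph{(c) Freiman.} Applying the Green--Ruzsa theorem (Theorem \ref{grf}) to $A'$ gives a coset progression $H+P$ of rank $O_K(1)$ with $A'\subset H+P$ and $|H+P|\ll_K|A'|\asymp_K e^{\Ent X}$. \emph{(d) Back to transport.} Since $A'$ is dense in $H+P$ one has $\dist_R(U_{A'},U_{H+P})\ll_K 1$ (a combinatorial computation via Pl\"unnecke--Ruzsa, as in Corollary \ref{coset-prog}), and since $A'$ nearly exhausts $A$ and $X\approx U_A$ one has $\dist_R(X,U_{A'})\ll_K 1$; the Ruzsa triangle inequality \eqref{ruzsa-triangle} then gives $\dist_R(X,U_{H+P})\ll_K 1$, and part (iii) converts this into $\trans(X,U_{H+P})\ll_K 1$.

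\textbf{Main obstacle.} I expect stage (a)---the quantitative flattening lemma---to be the crux: pinning down the shape of $p_X$ from the single scalar bound $\doubling[X]\le K$ needs more than soft manipulation of entropy inequalities, and it is also what makes stage (d) go through (ensuring the dense subset $A'$ genuinely approximates $X$ in Ruzsa distance, rather than merely a constant fraction of it). The entropic covering lemma behind (iii) is a secondary difficulty of the same flavour. The remaining ingredients---(i), the doubling estimate in (iii), and stages (b) and (c)---are either elementary or direct appeals to the cited combinatorial results.
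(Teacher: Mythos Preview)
Your treatment of (i) and of the doubling bound $\doubling[X]\le K^4$ in (iii) matches the paper. The substantive divergence is in how you propose to get the transport bound in (iii), and this is where there is a real gap.

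You plan to prove $\trans(X,Y)\ll_K 1$ directly from $\dist_R(X,Y)\le K$ via an ``entropic Ruzsa covering lemma'', using that for each $s$ the conditional laws $(X'\mid S=s)$ and $(Y'\mid S=s)$ differ by the constant shift $s$. The difficulty you flag as ``bookkeeping'' is in fact the whole problem: the bound $I(X';S)\ll_K 1$ tells you that for typical $s$ the law $(X'\mid S=s)$ is close to that of $X$ \emph{in Kullback--Leibler divergence}, but there is no general mechanism converting KL-closeness (or TV-closeness) into small entropic transport distance. A distribution $q$ can satisfy $\Ent(q)\ge\Ent(p_X)-O(1)$ and agree with $p_X$ on most of the mass, yet have $\trans(q,p_X)$ arbitrarily large, because the residual mass may be scattered over enormously many points. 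Your pigeonhole selection of a good $s$ produces exactly this situation, and no greedy patching resolves it without further structural input.

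The paper handles this by reversing your dependency between (ii) and (iii). It first proves the key special case of (iii) --- Proposition~\ref{sumtop}: if $U$ is uniform on a coset progression of bounded rank and $\dist_R(X,U)\le\log K$, then $\trans(X,U)\ll_{K,d} 1$ --- and this is where the real work lies. The argument uses the approximate-group structure of $H+P$ to ``quotient'' $G$ into cells of size $\asymp|H+P|$, shows that $X+U$ (and hence $X$) concentrates on $O_K(1)$ entropy's worth of cells, and then invokes a separate uniformisation result (Theorem~\ref{uni-thm} and Corollary~\ref{coset-prog}, proved by iterated $L^2$-flattening against random Bernoulli shifts) to transport the distribution within a cell to uniform. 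None of this machinery appears in your sketch. Once Proposition~\ref{sumtop} is in hand, (ii) follows from your stages (a)--(c) plus a Ruzsa-distance calculation (the paper's Proposition~\ref{main}), and the general transport bound in (iii) then follows from (ii) by the Ruzsa triangle inequality --- not the other way round.

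Two secondary issues in your plan for (ii). In stage (a) you claim the flattening reduces, at $O_K(1)$ transport cost, to $X=U_A$; this runs into the same obstacle as above (the tail of $X$ outside $A$ may be arbitrarily spread out), and the paper avoids it by working in Ruzsa distance throughout and only converting to transport at the end via Proposition~\ref{sumtop}. In stage (d), ``$A'$ nearly exhausts $A$'' is false --- BSG only gives $|A'|\gg_K|A|$, a positive proportion --- and you have no a priori control on $|A-A|$, so $\dist_R(X,U_{A'})\ll_K 1$ needs more care; the paper instead conditions $X$ to lie in $H+P$ and exploits that $H+P$ itself has bounded doubling.
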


Note that the uniform distribution on a coset progression $H+P$ of rank $d$ has doubling constant at most $2^d$, by \eqref{doubla}, so from \eqref{doubtrans} we obtain a partial converse to (ii): if $\trans(X,U) \leq K$ where $U$ is the uniform distribution on a coset progression of rank at most $K$, then $\doubling[X] \ll_K 1$.  Similarly, \eqref{rrt} gives a partial converse to (iii).  Thus, up to constants, Theorem \ref{iest} gives a satisfactory description of random variables with small doubling constant or Ruzsa distance. 

The implied constants in Theorem \ref{iest} can be explicitly computed from the proof, but are rather poor (being triple exponential in $K$).  We will not attempt to optimise these constants here.

We prove Theorem \ref{iest} in Section \ref{iest-sec}.

\subsection{The torsion-free case}

When $G$ is a torsion-free group (thus $nx \neq 0$ for all $x \neq 0$ in $G$ and all integers $n>0$), then the trivial doubling estimate $\doubling[A] \geq 1$ can be improved.  Indeed, one has
$$ |A+A| \geq 2|A|-1$$
for any finite non-empty subset $A$ of a torsion-free group $G$, since $A$ can be mapped onto the integers by a Freiman isomorphism (see \cite[Lemma 5.25]{tao-vu}).  In other words, one has
\begin{equation}\label{doub}
\doubling[A] \geq 2 - \frac{1}{|A|}.
\end{equation}
The example of an arithmetic progression (e.g $A = [0,n)$) shows that this estimate is sharp.

One can ask whether the same statement holds for entropy.  The following example shows that this is not quite the case.  Let $n$ be a large integer, and let $X_n$ be the sum of $n$ independent Bernoulli variables $\epsilon_1,\ldots,\epsilon_n \in \{-1,+1\}$ with an equal probability of each.  From the central limit theorem (or Stirling's formula), we know that $X_n$ is approximately distributed like a gaussian of mean zero and variance $n$, thus
$$ p_{X_n}(m) \approx \frac{1}{\sqrt{2\pi n}} e^{-m^2/2n}.$$
Approximating the Riemann sum by an integral, we then expect
$$ \Ent(X_n) \approx \int_\R F( \frac{1}{\sqrt{2\pi n}} e^{-x^2/2n} )\ dx = \log \sqrt{2\pi n} + \frac{1}{2}.$$
It is not hard to make this heuristic precise, and obtain the asymptotic
$$\Ent(X_n) = \log \sqrt{2\pi n} + \frac{1}{2} + o(1).$$
In particular, since $X_n + X'_n \equiv X_{2n}$ if $X'_n$ is an independent copy of $X_n$, we see that
$$ \doubling[X_n] = \sqrt{2} - o(1),$$
which is less than what one might have predicted from \eqref{doub}, \eqref{doubla}.  The point is that in the entropy setting one can construct ``approximate gaussian'' counterexamples whose closest analogue in the combinatorial setting, namely the arithmetic progressions, are less efficient by a constant factor.

It should not be surprising to experts in information theory that this gaussian-type bound is best possible:

\begin{theorem}\label{abb}  If $\eps > 0$, $G$ is torsion-free, and $X$ is a $G$-random variable, then
$$ \doubling(X) \geq \sqrt{2}-\eps,$$
provided $\Ent(X)$ is sufficiently large depending on $\eps$.
\end{theorem}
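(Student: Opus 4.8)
The plan is to reduce the statement to an entropic analogue of the classical entropy power inequality (EPI), which will give the constant $\sqrt 2$ as the sharp ratio. The key observation is that Theorem \ref{abb} is equivalent to the claim that $\Ent(X_1+X_2) \geq \Ent(X) + \frac12\log 2 - \eps'$ for independent copies $X_1,X_2$ of $X$, once $\Ent(X)$ is large. The natural strategy is to discretise the continuous EPI: a continuous random variable $Y$ with finite differential entropy $h(Y)$ satisfies $h(Y_1+Y_2) \geq h(Y) + \frac12\log 2$, with equality for Gaussians. One would like to transfer this to the discrete setting by smoothing $X$ into a continuous variable --- e.g.\ convolving the point masses of $X$ with a narrow uniform kernel on a small interval, or (since $G$ need not be $\Z$) first using a Freiman-type embedding to reduce to the case $G = \Z^D$ and then embedding $\Z^D \hookrightarrow \R^D$.

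First I would handle the reduction to $\Z$ (or $\Z^D$): since $\range(X)$ is finite and $G$ is torsion-free, the subgroup it generates is isomorphic to $\Z^D$ for some finite $D$, and by a generic linear projection $\Z^D \to \Z$ that is injective on the relevant finite set (and on its sumset), we may assume $X$ takes values in $\Z$ without changing any of the entropies involved. Next, I would pass from discrete to continuous entropy: let $\tilde X = X + W$ where $W$ is uniform on $[0,1)$ and independent of $X$; then $h(\tilde X) = \Ent(X)$ exactly (the density of $\tilde X$ is a step function of height $p_X(n)$ on each interval $[n,n+1)$, so $\int F = \sum F(p_X(n))$), and $\tilde X_1 + \tilde X_2 = X_1 + X_2 + (W_1+W_2)$ where $W_1 + W_2$ is the triangular density on $[0,2)$. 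The final step is to compare $h(\tilde X_1 + \tilde X_2)$ with $\Ent(X_1+X_2)$: since $\tilde X_1+\tilde X_2$ is $X_1+X_2$ convolved with a density supported on an interval of length $2$, its differential entropy exceeds $\Ent(X_1+X_2)$ by at most $\log 2$ (entropy of a sum is at most the sum of the summand entropy and $\log$ of the support size of the kernel, or more carefully, $h(X_1+X_2+V) \le \Ent(X_1+X_2) + \log 2$ when $V$ has support in an interval of length $2$; this costs a bounded additive constant, not an $\eps$). Plugging the continuous EPI $h(\tilde X_1+\tilde X_2) \geq h(\tilde X) + \frac12\log 2 = \Ent(X) + \frac12\log 2$ into this chain gives $\Ent(X_1+X_2) \geq \Ent(X) + \frac12\log 2 - \log 2$, which is the wrong constant.

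The subtlety --- and this is the main obstacle --- is that the crude $\log 2$ loss from the smoothing kernel must be avoided in the limit $\Ent(X)\to\infty$, which is precisely why the theorem only claims $\sqrt 2 - \eps$ rather than an exact inequality. The fix is to smooth at a scale $\delta \to 0$ that is small compared to the ``spread'' of $X$ but large compared to $1$: replace the lattice $\Z$ by $\delta^{-1}\Z$, i.e.\ consider $X$ as living on a grid of spacing comparable to its own fluctuations. Concretely, if $\Ent(X)$ is large then $X$ cannot be concentrated on a short progression, so after the reduction to $\Z$ one can rescale so that the ``effective support scale'' is some large $R = R(\Ent(X)) \to \infty$; smoothing by a uniform kernel of width $1$ on this rescaled picture then changes $h$ by $O(1/R) = o(1)$ rather than $O(1)$, because the kernel is now narrow relative to the density's variation. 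Making ``effective support scale'' precise is the delicate point: one wants a lower bound of the form $\Vol$-type spread $\gg \exp(\Ent(X))$ times something, or equivalently that after convolving $X$ with a width-$1$ kernel the density is already essentially continuous to within $o(1)$ in entropy, uniformly as $\Ent(X) \to \infty$; one route is to use the inverse theorem (Theorem \ref{iest}) to argue that either $\doubling[X]$ is already $\geq \sqrt 2$ (done) or else $X$ is controlled by a long coset progression in a torsion-free group, hence a genuine long arithmetic progression, on which the smoothing loss is negligible. I would carry out this last dichotomy carefully, since it is where the $o(1)$ (as opposed to an absolute constant) genuinely enters, and then invoke the continuous EPI as a black box to finish.
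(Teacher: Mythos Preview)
Your overall architecture matches the paper's proof exactly: reduce to $\Z$ (or $\Z^d$), smooth by convolving with a unit interval so that discrete and differential entropies agree, and invoke the continuous entropy power inequality \eqref{st} to get the constant $\frac{1}{2}\log 2$. You have also correctly located the only real difficulty, namely that the naive comparison $h(\tilde X_1+\tilde X_2)\le \Ent(X_1+X_2)+\log 2$ loses the entire gain, so one must show the smoothing error is $o(1)$ as $\Ent(X)\to\infty$.

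Where your proposal has a genuine gap is in how that $o(1)$ is obtained. Your heuristic ``the effective support scale $R$ is large, so width-$1$ smoothing costs $O(1/R)$'' presupposes that, after embedding in $\Z$, the density $p_{X_1+X_2}$ varies slowly at unit scale. Large entropy alone does not give this: the inverse theorem only places $X$ near the uniform measure on a generalised progression of some bounded rank $d$, and after a generic Freiman embedding into $\Z$ such a density can oscillate at unit scale (think of a rank-$2$ box $[0,N_1)\times[0,N_2)$ mapped to $\Z$ by $(a,b)\mapsto a+Mb$). The phrase ``hence a genuine long arithmetic progression'' tacitly assumes $d=1$, which the inverse theorem does not provide.

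The paper closes this gap with an extra ingredient you have not mentioned: after invoking Theorem~\ref{iest} to get $X=U+Z$ with $U$ uniform on a bounded-rank progression and $\Ent(Z)=O(1)$, one conditions on $Z$ lying in a finite set so that $p_{X''}\ll_\delta 1/|B|$ pointwise, and then uses a Fourier argument (Plancherel plus Kronecker approximation) to locate a specific nonzero $r\in\Z^d$ with $\dist_{TV}(X''_1+X''_2+r,\,X''_1+X''_2)\ll\mu$. One then chooses the Freiman isomorphism into $\Z$ so that this particular $r$ maps to a small integer $m$; smoothness of the \emph{sum} density along $m\Z$ is what makes the smoothing loss $O_\delta(\mu\log(1/\mu))$ rather than $O(1)$. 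When $m>1$ there is a further conditioning on residues mod $m$ before applying \eqref{st}. So your plan is the right one, but to make it go through you need to insert this ``smoothness lemma'' between the inverse theorem and the EPI step.
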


In asymptotic notation, Theorem \ref{abb} asserts that
$$ \doubling(X) \geq \sqrt{2} - o_{\Ent(X) \to \infty}(1).$$

We prove Theorem \ref{abb} in Section \ref{abb-sec} below.  This result combines the inverse theorem in Theorem \ref{iest} with an analogous inequality concerning the Shannon entropy 
$$\Ent_\R(X) :=  \int_\R F(p_X(x))\ dx$$ 
of a \emph{continuous} random variable $X$ taking values of $\R$, namely
\begin{equation}\label{st}
\Ent_\R( S + T ) \geq \frac{1}{2}(\Ent_\R(S) + \Ent_\R(T)) + \frac{1}{2} \log 2
\end{equation}
for independent continuous random variables $S, T$ (see \cite[Theorem 2]{abbn}).  The inverse sumset theory is necessary in order to approximate the discrete random variable by a continuous one in a certain sense.

In \cite{abbn}, the continuous entropy inequality
$$ \Ent_\R(X_1+\ldots+X_{n+1}) \geq \Ent_\R(X_1+\ldots+X_n) + \log \frac{\sqrt{n+1}}{\sqrt{n}}$$
was established, where $X_1,\ldots,X_{n+1}$ were independent copies of the same continuous random variable.  In view of Theorem \ref{abb}, it is thus natural to conjecture that
\begin{equation}\label{entxx}
 \Ent(X_1+\ldots+X_{n+1}) \geq \Ent(X_1+\ldots+X_n) + \log \frac{\sqrt{n+1}}{\sqrt{n}} - \eps
\end{equation}
for any $\eps > 0$ and any $G$-random variable $X$, if $G$ is torsion-free and $\Ent(X)$ is sufficiently large depending on $n$, $\eps$.  Unfortunately we were not able to establish this because the inverse theorem is not applicable in this setting, nevertheless we believe \eqref{entxx} to be true.

Finally, we remark that a number of additional entropy sumset inequalities were recently established in \cite{mmt}.  For instance, it was shown that
$$ \Ent(X+Y+Z) \leq \frac{1}{2} (\Ent(X+Y)+\Ent(Y+Z)+\Ent(Z+X))$$
for independent $G$-random variables $X,Y,Z$, which is an entropy analogue of the inequality
$$ |A+B+C| \leq |A+B|^{1/2} |B+C|^{1/2} |C+A|^{1/2}$$
(see e.g. \cite{gmr} or \cite{bela} for a proof). However, these bounds are primarily of interest in the regime where the doubling constants of the sets involved are large, and so are not directly related to the ones presented here.

\subsection{Acknowledgments}

The author is supported by NSF Research Award DMS-0649473, the NSF Waterman award and a grant from the MacArthur Foundation.  
The material in Sections \ref{ese-sec}, \ref{balog-sec} are based on some unpublished notes of the author with Van Vu.  We are indebted to the anonymous referee for a careful reading of the paper and many useful corrections and suggestions.

\section{Sumset estimates}\label{ese-sec}

In this section we establish the various sumset estimates claimed in the introduction, and in particular establish Theorem \ref{ese}.  The main tools will be entropy inequalities (in particular the submodularity inequality, Lemma \ref{submodularity}), elementary arithmetic identities, and independent and conditionally independent trials.

Readers who are familiar with the combinatorial analogues of these inequalities are invited to ``pretend'' that all of the random variables below are uniformly distributed on various finite sets, and in particular on finite groups, in order to see the analogy between both the statements and the proofs of the combinatorial and the entropy estimates.  Indeed, the arguments here were discovered by the reverse of this procedure, in which the author searched for the nearest entropy-theoretic analogue to each step in the combinatorial arguments.  For instance, if the combinatorial argument required one to pick an object $a$ from a finite set $A$, the entropy-based argument would instead consider an analogous random variable that was naturally associated to $A$; if the combinatorial argument required two objects to be related in some way, this usually manifested itself as a coupling of random variables (e.g. by the use of conditionally independent trials); and so forth.

We begin with the trivial sum set estimates.

\begin{lemma}[Trivial sumset estimate]\label{triv}  If $X, Y$ are two $G$-random variables, and $Z$ is a discrete random variable, then
$$ \Ent(X+Y|Z) \leq \Ent(X|Z) + \Ent(Y|Z).$$
If furthermore $X, Y$ are conditionally independent relative to $Z$, then
$$
\max(\Ent(X|Z), \Ent(Y|Z)) \leq \Ent(X+Y|Z).$$
In particular we have the inequalities \eqref{ent-sum}, \eqref{ent-lower}, and $\dist_R(X,Y) \geq 0$ for all $G$-random variables $X,Y$.
\end{lemma}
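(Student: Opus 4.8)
The plan is to deduce both inequalities, and hence all the stated consequences, from three standard facts about conditional Shannon entropy collected in Appendix~\ref{shannon-sec}. First, entropy does not increase under a deterministic map: $\Ent(f(W)\,|\,Z)\le\Ent(W\,|\,Z)$ for any function $f$, with equality when $f$ is injective on $\range(W)$. Second, conditioning on extra data does not increase entropy: $\Ent(W\,|\,V,Z)\le\Ent(W\,|\,Z)$. Third, conditional subadditivity $\Ent(W,V\,|\,Z)\le\Ent(W\,|\,Z)+\Ent(V\,|\,Z)$, which is immediate from the submodularity inequality (Lemma~\ref{submodularity}) applied to the tuples $(W,Z)$ and $(V,Z)$; in fact the last two facts both follow from submodularity.

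For the upper bound I would simply note that $X+Y$ is a deterministic function of the pair $(X,Y)$, so
\[
\Ent(X+Y\,|\,Z)\le\Ent(X,Y\,|\,Z)\le\Ent(X\,|\,Z)+\Ent(Y\,|\,Z),
\]
using the function bound followed by conditional subadditivity; no independence is needed here. The same computation, with $x\mapsto -x$ being a bijection of $G$, handles $X-Y$ in the unconditional consequence \eqref{ent-sum}.

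For the lower bound, assume $X,Y$ are conditionally independent given $Z$. The key observation is that once $Y$ and $Z$ are fixed the map $x\mapsto x+Y$ is a bijection, so $X+Y$ and $X$ determine each other; concretely $X=(X+Y)-Y$. Hence
\[
\Ent(X+Y\,|\,Z)\ge\Ent(X+Y\,|\,Y,Z)=\Ent(X\,|\,Y,Z)=\Ent(X\,|\,Z),
\]
where the inequality is ``conditioning reduces entropy'', the first equality is the function bound applied in both directions, and the last equality is conditional independence. Exchanging the roles of $X$ and $Y$ gives $\Ent(X+Y\,|\,Z)\ge\Ent(Y\,|\,Z)$, and the two bounds yield the claimed maximum. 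Taking $Z$ deterministic turns this into \eqref{ent-lower} for $X+Y$, and since $\Ent(-Y)=\Ent(Y)$ and $x\mapsto x-Y$ is again a bijection given $Y$, the same argument gives \eqref{ent-lower} for $X-Y$.

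Finally, for $\dist_R(X,Y)\ge0$: applying \eqref{ent-lower} to the independent pair $X',-Y'$ (with $X',Y'$ independent copies of $X,Y$) gives $\Ent(X'-Y')\ge\Ent(X')$ and $\Ent(X'-Y')\ge\Ent(-Y')=\Ent(Y')$, so averaging, $\Ent(X'-Y')\ge\tfrac12\Ent(X')+\tfrac12\Ent(Y')$, which is exactly $\dist_R(X,Y)\ge0$. There is no genuine obstacle in this lemma --- it is ``trivial'' as advertised --- so the only real work is to have the \emph{conditional} forms of these routine entropy inequalities stated in the appendix and to cite them correctly; everything else is bookkeeping.
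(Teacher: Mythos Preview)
Your proof is correct and follows essentially the same route as the paper's: the upper bound via $(X,Y)$ determining $X+Y$ plus subadditivity, and the lower bound via the chain $\Ent(X+Y\mid Z)\ge\Ent(X+Y\mid Y,Z)=\Ent(X\mid Y,Z)=\Ent(X\mid Z)$. The only cosmetic difference is that the paper first conditions on $Z$ to reduce to the unconditional case and then cites \eqref{ent-sum-}, \eqref{entyx}, \eqref{ento}, \eqref{fsqueeze}, whereas you work directly with the conditional versions (\eqref{fsqueeze}, \eqref{ent-subadd}) throughout.
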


\begin{proof} By conditioning on $Z$ we may assume that $Z$ is deterministic, thus the task reduces to showing \eqref{ent-sum} and \eqref{ent-lower}.  The former inequality follows from \eqref{ent-sum-} and \eqref{entyx} since $(X,Y)$ determines $X+Y$.    To prove the latter inequality, observe 
from \eqref{ento}, \eqref{fsqueeze}, and the independence of $X,Y$ that
$$ \Ent(X+Y) \geq \Ent(X+Y|Y) = \Ent(X|Y) = \Ent(X)$$
and similarly $\Ent(X+Y) \geq \Ent(Y)$, and the claim follows.
\end{proof}

Now we establish the Ruzsa triangle inequality \eqref{ruzsa-triangle}, which we rewrite as
$$ \Ent(X-Z) \leq \Ent(X-Y) + \Ent(Y-Z) - \Ent(Y)$$
where $X,Y,Z$ are independent $G$-random variables.  Observe that $(X-Y,Y-Z)$ and $(X,Z)$ both determine $X-Z$, while $(X-Y,Y-Z)$ and $(X,Z)$ jointly determine $(X,Y,Z)$.  By the submodularity inequality (Lemma \ref{submodularity}) we conclude that
$$ \Ent(X,Y,Z) + \Ent(X-Z) \leq \Ent(X-Y,Y-Z) + \Ent(X,Z).$$
Applying \eqref{ent-sum-} and the independence hypotheses we obtain the claim.

To prove \eqref{condit}, we introduce the idea of \emph{conditionally independent trials}.  Given two random variables $X, Y$ (not necessarily independent), we can produce two conditionally independent trials $X_1, X_2$ of $X$ relative to $Y$, defined by declaring $(X_1|Y=y)$ and $(X_2|Y=y)$ to be independent trials of $(X|Y=y)$ for all $y \in \range(Y)$, thus in particular $X_1 \equiv X_2 \equiv X$, and $X_1,X_2$ are conditionally independent relative to $Y$.  Observe from conditional independence that
$$ \Ent(X_1,X_2|Y) = \Ent(X_1|Y) + \Ent(X_2|Y) = 2\Ent(X|Y)$$
and thus
\begin{equation}\label{entity}
 \Ent(X_1,X_2,Y) = 2 \Ent(X,Y) - \Ent(Y).
 \end{equation}

Let $X, Y$ be independent $G$-random variables.  Let $(X_1,Y_1), (X_2,Y_2)$ be conditionally independent trials of $(X,Y)$ relative to $X-Y$; since $(X,Y)$ determines $X-Y$, we conclude that $X_1-Y_1=X_2-Y_2$.  Let $(X_3,Y_3)$ be another trial of $(X,Y)$, independent of $X_1,X_2,Y_1,Y_2$, then we have the identity
$$ X_3+Y_3 = (X_3-Y_2) - (X_1-Y_3) + X_2 + Y_1.$$
Thus $(X_3-Y_2,X_1-Y_3,X_2,Y_1)$ and $(X_3,Y_3)$ each determine $X_3+Y_3$, while $(X_3-Y_2,X_1-Y_3,X_2,Y_1)$ and $(X_3,Y_3)$ together determine $(X_1,X_2,X_3,Y_1,Y_2,Y_3)$; applying the submodularity inequality (Lemma \ref{submodularity}) we conclude
$$ \Ent(X_1,X_2,X_3,Y_1,Y_2,Y_3) + \Ent(X_3+Y_3) \leq \Ent(X_3-Y_2,X_1-Y_3,X_2,Y_1) + \Ent( X_3, Y_3 ).$$
But from \eqref{entity}, \eqref{ent-sum-}, and the independence hypotheses we have
\begin{align*}
\Ent(X_1,X_2,X_3,Y_1,Y_2,Y_3) &= 2 \Ent(X,Y) - \Ent(X-Y) + \Ent(X) + \Ent(Y) \\
\Ent(X_3+Y_3) &= \Ent(X+Y) \\
\Ent(X_3-Y_2,X_1-Y_3,X_2,Y_1) &\leq 2 \Ent(X-Y) + \Ent(X) + \Ent(Y) \\
\Ent(X_3,Y_3) &= \Ent(X)+\Ent(Y)
\end{align*}
and thus
\begin{equation}\label{entpm}
 \Ent(X+Y) \leq 3\Ent(X-Y) - \Ent(X) - \Ent(Y)
\end{equation}
which rearranges to form \eqref{condit}.

Finally, we establish \eqref{jest}.  Let $X, Y$ be independent $G$-random variables, and let $(X_0,Y_0),\ldots,(X_n,Y_n)$ be independent trials of $(X,Y)$.  Set $S_i = X_i + Y_i$ for $0 \leq i \leq n$.  We observe the identity
$$ S_0 + \ldots + S_n = (Y_0+X_1) + (Y_1+X_2) + \ldots + (Y_{n-1}+X_n) + (Y_n + X_0).$$
In particular, we see that $(X_0,Y_0,S_1,\ldots,S_n)$ and $(Y_0+X_1,\ldots,Y_{n-1}+X_n,Y_n+X_0)$ both determine $S_0+\ldots+S_n$, while $(X_0,Y_0,S_1,\ldots,S_n)$ and $(Y_0+X_1,\ldots,Y_{n-1}+X_n,Y_n+X_0)$ jointly determine $(X_0,\ldots,X_n,Y_0,\ldots,Y_n)$.  Applying the submodularity inequality (Lemma \ref{submodularity}) we conclude
$$ \Ent(X_0,\ldots,X_n,Y_0,\ldots,Y_n) + \Ent(S_0+\ldots+S_n) \leq \Ent(X_0,Y_0,S_1,\ldots,S_n) + \Ent(Y_0+X_1,\ldots,Y_{n-1}+X_n,Y_n+X_0).$$
But from \eqref{ent-sum-} and the independence hypotheses we have
\begin{align*}
\Ent(X_0,\ldots,X_n,Y_0,\ldots,Y_n) &= (n+1) (\Ent(X) + \Ent(Y)) \\
\Ent(X_0,Y_0,S_1,\ldots,S_n) &= \Ent(X) + \Ent(Y) + n \Ent(X+Y) \\
\Ent(Y_0+X_1,\ldots,Y_{n-1}+X_n,Y_n+X_0) &\leq (n+1) \Ent(X+Y);
\end{align*}
Putting all this together we obtain the inequality
$$ \Ent(S_0+\ldots+S_n) \leq (2n+1) \Ent(X+Y) - n \Ent(X) - n \Ent(Y).$$
In particular, if $X_1,\ldots,X_{2n+2}$ are independent copies of $X$ then the above inequality (setting $Y$ to be another independent copy of $X$) gives
$$ \Ent(X_1+\ldots+X_{2n+2}) \leq \Ent(X) + (2n+1) \log \doubling[X];$$
applying \eqref{ent-lower} one concludes that
$$ \Ent(X_1+\ldots+X_n) = \Ent(X) + O( n \log \doubling[X] )$$
for any $n \geq 1$.  Applying \eqref{entpm} one then concludes that
$$ \Ent(X_1+\ldots+X_n - X'_1 - \ldots - X'_m) = \Ent(X) + O( (n+m) \log \doubling[X] )$$
for any $n,m \geq 1$, and the claim \eqref{jest} follows.  The proof of Theorem \ref{ese} is now complete.

\section{An entropy version of the Balog-Szemer\'edi-Gowers lemma}\label{balog-sec}

We now state an entropy analogue of the Balog-Szemer\'edi-Gowers lemma.  In the combinatorial setting, one had the notion of a \emph{refinement} $A'$ of a set $A$, which was a subset $A'$ of $A$ which still had size comparable to $A$.  In the entropy setting, the corresponding notion is that of a \emph{conditioning} of a random variable $X$ relative to some other related random variable $Y$, such that $\Ent(X|Y)$ was still close to $\Ent(X)$.  The entropy Balog-Szemer\'edi-Gowers lemma then asserts that if two weakly dependent random variables $X, Y$ have a sum of small entropy, then there exist conditionings of $X, Y$ (which capture most of the entropy) whose \emph{independent} sum still has small entropy.

In fact, the conditioning can be given explicitly:

\begin{theorem}[Entropy Balog-Szemer\'edi-Gowers lemma]\label{ent-bsg}  Let $G$ be an additive group, and let $X, Y$ be $G$-random variables which are weakly dependent in the sense that
\begin{equation}\label{entk}
 \Ent(X,Y) \geq \Ent(X) + \Ent(Y) - \log K
 \end{equation}
for some $K \geq 1$.  Suppose also that
\begin{equation}\label{entk2}
 \Ent(X+Y) \leq \frac{1}{2} \Ent(X) + \frac{1}{2} \Ent(Y) + \log K.
 \end{equation}
Then if we let $(X_1,Y), (X_2,Y)$ be conditionally independent trials of $(X,Y)$ conditioning on $Y$, and then let $(X_1, X_2, Y)$ and $(X_1, Y')$ be conditionally independent trials of $(X_1,X_2,Y)$ and $(X_1,Y)$ conditioning on $X_1$, then $X_2$ and $Y'$ are conditionally independent relative to $X_1, Y$, with
\begin{align}
\Ent( X_2 | X_1, Y ) &\geq \Ent(X) - \log K \label{loga}\\
\Ent( Y' | X_1, Y ) &\geq \Ent(Y) - \log K \label{logb}\\
\Ent( X_2 + Y' | X_1, Y ) &\leq \frac{1}{2} \Ent(X) + \frac{1}{2} \Ent(Y) + 7 \log K.\label{seven}
\end{align}
\end{theorem}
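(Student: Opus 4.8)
The plan is to mimic the combinatorial BSG argument, where one counts paths of length two in a bipartite ``popularity'' graph, but to carry out all the counting in terms of entropy via the submodularity inequality (Lemma \ref{submodularity}) and the machinery of conditionally independent trials. First I would verify the conditional independence claim and the two easy lower bounds \eqref{loga}, \eqref{logb}. The construction says: take conditionally independent trials $(X_1,Y),(X_2,Y)$ of $(X,Y)$ relative to $Y$, so that $\Ent(X_1,X_2,Y) = 2\Ent(X,Y) - \Ent(Y)$ by \eqref{entity}; then take conditionally independent trials $(X_1,X_2,Y)$ and $(X_1,Y')$ relative to $X_1$, so that $X_2$ (through the triple) and $Y'$ are conditionally independent given $X_1$, hence also given $(X_1,Y)$ since $Y$ is a function of the first triple. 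For \eqref{loga}: by construction $\Ent(X_2|X_1,Y)=\Ent(X_2|Y) - I(X_2;X_1|Y)$; since $X_2\equiv X$ conditionally on $Y$ we get $\Ent(X_2|Y)=\Ent(X|Y)$, and the mutual information term is at most $\Ent(X_1|Y)-\Ent(X_1|X_2,Y)$; rearranging using $\Ent(X_1,X_2,Y)=2\Ent(X,Y)-\Ent(Y)$ and the hypothesis \eqref{entk} $\Ent(X,Y)\geq\Ent(X)+\Ent(Y)-\log K$ yields $\Ent(X_2|X_1,Y)\geq 2\Ent(X,Y)-\Ent(X,Y)-\Ent(Y)=\Ent(X,Y)-\Ent(Y)=\Ent(X|Y)\geq \Ent(X)-\log K$ after one more application of \eqref{entk} written as $\Ent(X|Y)\geq\Ent(X)-\log K$. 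The bound \eqref{logb} for $Y'$ is analogous (now conditioning the second split on $X_1$ and using that $(X_1,Y')\equiv(X_1,Y)$, together with $\Ent(Y|X_1)\geq\Ent(Y)-\Ent(X_1|Y)-\log K$ type bookkeeping).

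The substantive part is \eqref{seven}, the upper bound on $\Ent(X_2+Y'|X_1,Y)$. Here I would exploit an algebraic identity that, conditionally on everything, expresses $X_2+Y'$ as a combination of sums of the form $X_i+Y_j$ whose entropies are all controlled by \eqref{entk2}. The key observation is that $X_1-Y$ serves as a ``pivot'': conditionally on $X_1$ and on $Y$, the variable $X_2$ ranges like $X$ given $Y$, and $Y'$ ranges like $Y$ given $X_1$; introducing an auxiliary independent trial $(X_3,Y_3)$ of $(X,Y)$ one writes something like
\begin{equation*}
X_2+Y' = (X_2+Y) - (X_3+Y) + (X_3+Y')
\end{equation*}
is not quite right because $(X_3,Y')$ need not be a legitimate sum; instead one uses that, up to conditioning, $Y'$ has the same distribution as $Y$ given $X_1$, and $X_1$ itself is (conditionally on $Y$) a copy of $X$, so one can route through $X_1+Y$ and use the triangle-type inequalities already proved. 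Concretely, I would apply submodularity to a carefully chosen pair of ``determining sets'': one set built from the two-step sums $\{X_1+Y, X_2+Y, X_1+Y'\}$ (each of entropy $\le \frac12\Ent(X)+\frac12\Ent(Y)+\log K$ by \eqref{entk2}) together with a few of the marginal variables, and the other set built around $X_2+Y'$ and enough marginals to recover the joint distribution. Tracking the entropy deficits — each use of \eqref{entk} costs a $\log K$ and each use of \eqref{entk2} costs a $\log K$ — and counting that one needs three sum-terms plus two weak-dependence corrections plus two more from the two rounds of conditionally independent trials, produces the constant $7\log K$.

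The main obstacle I expect is getting the algebraic identity and the associated submodularity application exactly right so that (a) the ``left'' tuple and the ``right'' tuple genuinely each determine $X_2+Y'$, (b) jointly they determine the full joint variable $(X_1,X_2,Y,Y')$ (or enough of it), and (c) the resulting entropy terms are each either a marginal entropy of $X$ or $Y$, a joint entropy $\Ent(X,Y)$ (bounded via \eqref{entk}), or a sum entropy $\Ent(X+Y)$ (bounded via \eqref{entk2}), with the bookkeeping collapsing to the clean bound $\frac12\Ent(X)+\frac12\Ent(Y)+7\log K$. The subtlety is that $Y'$ is correlated with $X_1$ but conditionally independent of $X_2$ given $(X_1,Y)$, so one must be careful about which variables are being conditioned on at each stage; the safe route is to do all computations as conditional entropies relative to $(X_1,Y)$ from the outset, reducing the problem to an \emph{unconditional} BSG-type estimate for the conditionally-independent pair $(X_2,Y')$ given that their individual conditional entropies are large and the conditional entropy of $X_2+Y$ (equivalently $X_1+Y'$, by symmetry of the construction) is small, and then iterating the Ruzsa-type inequalities \eqref{condit}, \eqref{ruzsa-triangle} from Theorem \ref{ese} to move from a one-sided smallness hypothesis to the two-sided conclusion \eqref{seven}.
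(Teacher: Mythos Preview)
Your proposal has the right overall shape---submodularity plus algebraic identities plus conditionally independent trials---but it is missing the specific decomposition that makes \eqref{seven} work, and two of your concrete suggestions are dead ends.

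First, the easy parts are simpler than you make them. By construction $X_1$ and $X_2$ are conditionally independent given $Y$, so $\Ent(X_2\mid X_1,Y)=\Ent(X_2\mid Y)=\Ent(X\mid Y)\geq\Ent(X)-\log K$ directly from \eqref{entk}; no mutual-information bookkeeping is required. Likewise $Y$ and $Y'$ are conditionally independent given $X_1$, so $\Ent(Y'\mid X_1,Y)=\Ent(Y'\mid X_1)=\Ent(Y\mid X)\geq\Ent(Y)-\log K$.

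For \eqref{seven}, the identity you try, $X_2+Y'=(X_2+Y)-(X_3+Y)+(X_3+Y')$ with an auxiliary independent $(X_3,Y_3)$, does not help: there is no reason for $\Ent(X_3+Y')$ to be small, since $X_3$ and $Y'$ are not coupled through the hypothesis \eqref{entk2}. Your fallback of invoking the Ruzsa inequalities from Theorem \ref{ese} is also problematic, because those are stated for \emph{independent} variables while the whole point here is that $X_2$ and $Y'$ are only conditionally independent given $(X_1,Y)$, and one must stay inside that conditioning.

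What the paper does is a two-step argument whose pivot is $X_1-X_2$, not $X_1-Y$. Step one is a \emph{weak} BSG lemma: $\Ent(X_1-X_2\mid Y)\leq\Ent(X)+4\log K$. To prove it one introduces a \emph{further} conditionally independent trial---a copy $\tilde Y$ of $Y$ relative to $(X_1,X_2)$---and observes that both $(X_1,X_2,Y)$ and $(X_1+\tilde Y,\,X_2+\tilde Y,\,Y)$ determine $(X_1-X_2,Y)$, while jointly they determine $(X_1,X_2,Y,\tilde Y)$; submodularity plus two uses of \eqref{entk} and two of \eqref{entk2} gives the $4\log K$. Step two uses the identity $X_2+Y'=(X_1+Y')-(X_1-X_2)$: now $(X_2,Y',Y)$ and $(X_1-X_2,\,X_1+Y',\,Y)$ each determine $(X_2+Y',Y)$ and jointly determine $(X_1,X_2,Y,Y')$. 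Submodularity, the weak lemma, the fact that $(X_1,Y')\equiv(X,Y)$ so $\Ent(X_1+Y')=\Ent(X+Y)$, and one more use each of \eqref{entk}, \eqref{entk2} give $\Ent(X_2+Y'\mid Y)\leq\tfrac12\Ent(X)+\tfrac12\Ent(Y)+7\log K$; conditioning further on $X_1$ only decreases this. Your list $\{X_1+Y,\,X_2+Y,\,X_1+Y'\}$ is close in spirit, but without isolating $X_1-X_2$ as an intermediate object you will not get a clean submodularity application, and the constant will not close.
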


This should be compared with Lemma \ref{bsg}.  The appearance of the exponent $7$ in both statements is not coincidental, as the proofs are fundamentally the same.

\begin{remark} Let $E \subset A \times B$ be a regular bipartite graph between two finite non-empty sets $A, B$ in $G$, thus the $A$-degree $|\{ b \in B: (a,b) \in E \}|$ is independent of $a \in A$, and similarly the $B$-degree $|\{ a \in A: (a,b) \in E \}|$ is independent of $b \in B$.  Let $(X,Y)$ be an element of $E$ chosen uniformly at random.  Then the random variables $(Y', X_1, Y, X_2)$ defined above are drawn uniformly from the space of all paths $(b, a, b', a')$ of length three in $E$, thus $(a,b), (a,b'), (a',b') \in E$.  It may be helpful to keep this example in mind when going through the proof of Theorem \ref{ent-bsg}.  Not surprisingly, paths of length three also play a major role in the proof of Theorem \ref{bsg}.
\end{remark}

We now establish the theorem.  By construction, $Y'$ and $X_2,Y$ are conditionally independent relative to $X_1$, and thus $X_2$ and $Y'$ are conditionally independent relative to $X_1,Y$ as claimed.  Also, since $X_1$ is conditionally independent of $X_2$ relative to $Y$, one has
$$ \Ent(X_2|X_1,Y) = \Ent(X_2|Y) = \Ent(X|Y)$$
and \eqref{loga} follows from \eqref{entk}.  Similarly, since $Y, Y'$ are conditionally independent relative to $X_1$, one has
$$ \Ent(Y'|X_1,Y) = \Ent(Y'|X_1) = \Ent(Y|X)$$
and \eqref{logb} follows from \eqref{entk}.

The only remaining claim to establish is \eqref{seven}.  We need a preliminary lemma:

\begin{lemma}[Weak Balog-Szemer\'edi-Gowers lemma]\label{weak-bsg} We have
$$ \Ent(X_1-X_2|Y) \leq \Ent(X) + 4 \log K.$$
\end{lemma}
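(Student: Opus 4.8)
The plan is to bound $\Ent(X_1-X_2|Y)$ by relating it, via submodularity, to the entropy of the pair $(X_1+Y, X_2+Y)$, which we can control using \eqref{entk} and \eqref{entk2}. First I would observe that since $(X_1,Y),(X_2,Y)$ are conditionally independent trials of $(X,Y)$ relative to $Y$, the identity \eqref{entity} gives $\Ent(X_1,X_2,Y) = 2\Ent(X,Y)-\Ent(Y)$, and hence by \eqref{entk},
\begin{equation}\label{prop-xxy}
\Ent(X_1,X_2,Y) \geq \Ent(X_1)+\Ent(X_2)+\Ent(Y)-2\log K = 2\Ent(X)+\Ent(Y)-2\log K,
\end{equation}
using $X_1\equiv X_2\equiv X$. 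Also $\Ent(X_1,X_2,Y)\leq \Ent(X)+\Ent(X)+\Ent(Y)$ trivially, so this pins down $\Ent(X_1,X_2,Y)$ to within $2\log K$ of the independent value.

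Next I would set up the submodularity step. Conditioning everything on $Y$, the two random variables $X_1+Y$ (really just $X_1$, but the additive form is what matters under the abstract identity) — more precisely, I work with the pair $(X_1+Y, X_2+Y)$: note that $(X_1+Y,X_2+Y)$ together with $Y$ determines $(X_1,X_2,Y)$, while $(X_1+Y,X_2+Y)$ alone determines $(X_1+Y)-(X_2+Y)=X_1-X_2$. Applying the submodularity inequality (Lemma \ref{submodularity}) to the pair $(X_1-X_2)$ and $Y$ as functions of $(X_1+Y,X_2+Y)$, or more directly working relative to $Y$ throughout, gives
\begin{equation}\label{prop-sub}
\Ent(X_1-X_2|Y) + \Ent(X_1+Y,X_2+Y \mid Y') \leq \ldots
\end{equation}
— here I have to be careful, so let me instead argue conditionally on $Y$ from the start: for each fixed value $y$ of $Y$, $X_1$ and $X_2$ are independent copies of $(X|Y=y)$, so by the Ruzsa-type manipulation, $(X_1-X_2)$ and $(X_1,X_2)$ are related by submodularity through $(X_1+Y, X_2+Y)$ viewed as $(X_1,X_2)$ shifted by $y$. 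Concretely: $(X_1-X_2, X_2)$ determines $X_1$ hence determines $(X_1,X_2)$; and $(X_1,X_2)$ determines $X_1-X_2$; so $\Ent(X_1,X_2|Y) \leq \Ent(X_1-X_2|Y) + \Ent(X_2|Y)$, i.e. $\Ent(X_1-X_2|Y) \geq \Ent(X_1,X_2|Y)-\Ent(X|Y)$ — that's the wrong direction. The right move is: $(X_1+Y, X_2+Y)$ and $Y$ jointly determine $(X_1,X_2,Y)$; $(X_1+Y,X_2+Y)$ determines $X_1-X_2$; and $Y$ determines $Y$; so submodularity yields $\Ent(X_1,X_2,Y) + \Ent(\emptyset) \leq \Ent(X_1+Y,X_2+Y) + \Ent(Y)$, giving $\Ent(X_1+Y,X_2+Y) \geq \Ent(X_1,X_2,Y)-\Ent(Y) \geq 2\Ent(X)-2\log K$ by \eqref{prop-xxy}. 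On the other hand $\Ent(X_1+Y,X_2+Y) \leq \Ent(X_1+Y)+\Ent(X_2+Y) = 2\Ent(X+Y) \leq \Ent(X)+\Ent(Y)+2\log K$ by \eqref{entk2} and $X\equiv X_i$. Combining, and then subtracting, $\Ent(X_1-X_2|Y)$ is squeezed: since $(X_1+Y,X_2+Y)$ determines both $X_1-X_2$ and (conditionally on $Y$) the pair $(X_1,X_2)$, one gets $\Ent(X_1-X_2) \leq \Ent(X_1+Y,X_2+Y) - $ [a term at least $\Ent(X)-\Ent(Y)-O(\log K)$]; I would make this precise by the chain rule $\Ent(X_1+Y,X_2+Y|Y) = \Ent(X_1-X_2|Y) + \Ent(X_1+Y|X_1-X_2,Y)$ and bounding $\Ent(X_1+Y|X_1-X_2,Y) = \Ent(X_1|X_1-X_2,Y) \geq \Ent(X_1,X_2,Y) - \Ent(X_1-X_2,Y) \geq \Ent(X)-\Ent(Y)-2\log K - [\text{the } \Ent(X_1-X_2|Y) \text{ term, absorbed}]$. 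Unwinding the bookkeeping, the two-sided estimates on $\Ent(X_1+Y,X_2+Y)$ differing by $2\log K$ each, together with $\Ent(X_1+Y,X_2+Y|Y) \geq \Ent(X_1+Y,X_2+Y) - \Ent(Y)$, collapse to $\Ent(X_1-X_2|Y) \leq \Ent(X) + 4\log K$.

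The main obstacle is exactly this last bookkeeping: keeping the conditioning on $Y$ consistent while applying submodularity, and making sure the three sources of $\log K$ loss (one from \eqref{entk}, one from \eqref{entk2} applied twice, and one from passing between conditional and unconditional entropy of the sum-pair) add up to the stated $4\log K$ rather than something larger. I would therefore carry out the argument entirely conditionally on $Y$: fix $y$, work with independent copies $X_1,X_2$ of $(X|Y=y)$, apply the Ruzsa triangle / submodularity identity "$\Ent(X_1,X_2) \leq \Ent(X_1-X_2) + \min(\Ent(X_1),\Ent(X_2))$" in the form that bounds $\Ent(X_1-X_2|Y=y)$ above by $\Ent(X_1,X_2|Y=y) - \Ent(X_1|Y=y) + $ correction, then take the average over $y$ weighted by $p_Y$ and feed in \eqref{prop-xxy} and the conditional version of \eqref{entk2}. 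Only at the end do I invoke that $\Ent(X_i|Y) = \Ent(X|Y) \geq \Ent(X) - \log K$ and $\Ent(X_1+X_2|Y) = \Ent(X+Y|Y) \leq \Ent(X+Y) \leq \frac12\Ent(X)+\frac12\Ent(Y)+\log K$, wait — I must double-check whether $\Ent(X_1+X_2|Y)$ or $\Ent((X_1+Y)+(X_2+Y)|Y)$ is the quantity appearing; since $X_1+X_2 = (X_1+Y)+(X_2+Y) - 2Y$ and we are conditioning on $Y$, these have equal conditional entropy, so \eqref{entk2} can be brought to bear after noting $X_i + Y \equiv X+Y$ is not literally true but $\Ent(X_i+Y) = \Ent(X+Y)$ does hold since $(X_i,Y)\equiv(X,Y)$. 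Assembling these, the inequality $\Ent(X_1-X_2|Y) \leq \Ent(X)+4\log K$ follows.
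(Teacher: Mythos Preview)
Your argument has a genuine gap: it never actually brings the hypothesis \eqref{entk2} to bear on $\Ent(X_1-X_2\mid Y)$. The only place \eqref{entk2} enters is through $\Ent(X_1+Y,X_2+Y)\leq 2\Ent(X+Y)$, which bounds $\Ent(X_1-X_2)$ (unconditioned) by $\Ent(X)+\Ent(Y)+2\log K$; this is useless when $\Ent(Y)$ is large. Your subsequent chain-rule manipulations, e.g.\ writing $\Ent(X_1,X_2\mid Y)=\Ent(X_1-X_2\mid Y)+\Ent(X_1\mid X_1-X_2,Y)$ and then expressing the last term as $\Ent(X_1,X_2,Y)-\Ent(X_1-X_2,Y)$, collapse to the tautology $\Ent(X_1,X_2\mid Y)=\Ent(X_1,X_2\mid Y)$: the $\Ent(X_1-X_2\mid Y)$ terms cancel and nothing is learned. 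The root problem is that $(X_1+Y,X_2+Y)$ determines $X_1-X_2$ but \emph{not} $Y$, so you cannot use it inside a submodularity inequality whose output is $\Ent(X_1-X_2,Y)$.

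The paper's proof supplies exactly the missing idea: introduce a second copy $Y'$ of $Y$, conditionally independent of $Y$ given $(X_1,X_2)$. Then $(X_1+Y',X_2+Y',Y)$ determines $(X_1-X_2,Y)$, and together with $(X_1,X_2,Y)$ it determines $(X_1,X_2,Y,Y')$; submodularity now gives
\[
\Ent(X_1,X_2,Y,Y')+\Ent(X_1-X_2,Y)\leq \Ent(X_1,X_2,Y)+\Ent(X_1+Y',X_2+Y',Y).
\]
Since $(X_i,Y')\equiv(X,Y)$ one has $\Ent(X_1+Y',X_2+Y',Y)\leq 2\Ent(X+Y)+\Ent(Y)$, and now \eqref{entk2} applies without an unwanted $\Ent(Y)$ surviving. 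You should also note that the quantity $\Ent(X_1+X_2\mid Y)$ you invoke at the end has nothing to do with $\Ent(X+Y)$: given $Y=y$, $X_1+X_2$ is a sum of two independent copies of $(X\mid Y=y)$, not a shift of a single copy, so \eqref{entk2} does not control it.
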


\begin{proof}  Let $(X_1,X_2,Y), (X_1,X_2,Y')$ be two conditionally independent copies of $(X_1,X_2,Y)$ relative to $(X_1,X_2)$.  Observe that $(X_1,X_2,Y)$, $(X_1+Y', X_2+Y',Y)$ both determine $(X_1-X_2, Y)$, and that $(X_1,X_2,Y)$ and $(X_1+Y', X_2+Y',Y)$ jointly determine $(X_1,X_2,Y,Y')$.  Applying the submodularity inequality (Lemma \ref{submodularity}) we conclude that
$$ \Ent( X_1,X_2,Y,Y' ) + \Ent(X_1-X_2, Y) \leq \Ent(X_1,X_2,Y) + \Ent(X_1+Y', X_2+Y',Y).$$
But from \eqref{ent-sum}, \eqref{entity}, \eqref{eident}, one has
\begin{align*}
\Ent( X_1,X_2,Y,Y' ) &= 2 \Ent(X_1,X_2,Y) - \Ent(X_1,X_2) \\
&\geq 4 \Ent(X,Y) - 2 \Ent(Y) - 2 \Ent(X) \\
\Ent(X_1-X_2,Y) &= \Ent(X_1-X_2|Y) + \Ent(Y) \\
\Ent(X_1,X_2,Y) &= 2 \Ent(X,Y) - \Ent(Y) \\
\Ent(X_1+Y',X_2+Y',Y) &\leq 2 \Ent(X+Y) + \Ent(Y)
\end{align*}
and thus
$$ \Ent(X_1-X_2|Y) \leq 2\Ent(X+Y) + \Ent(Y) + 2 \Ent(X) - 2 \Ent(X,Y),$$
and the claim then follows from \eqref{entk}, \eqref{entk2}.
\end{proof}

Now observe that $(X_2, Y', Y)$ and $(X_1-X_2,X_1+Y', Y)$ both determine $(X_2+Y',Y)$, and that $(X_2, Y', Y)$ and $(X_1-X_2,X_1+Y', Y)$ jointly determine $(X_1,X_2,Y,Y')$.  Applying the submodularity inequality (Lemma \ref{submodularity}) we conclude that
$$ \Ent(X_1,X_2,Y,Y') + \Ent(X_2+Y',Y) \leq \Ent(X_2, Y', Y) + \Ent(X_1-X_2,X_1+Y', Y).$$
But from \eqref{ent-sum}, \eqref{eident}, and (a generalisation of) \eqref{entity}, one has
\begin{align*}
\Ent(X_1,X_2,Y,Y') &= \Ent(X_1,X_2,Y) + \Ent(X_1,Y') - \Ent(X_1) \\
&= 2\Ent(X,Y) - \Ent(Y) + \Ent(X,Y) - \Ent(X) \\
\Ent(X_2+Y',Y) &= \Ent(X_2+Y'|Y) + \Ent(Y) \\
\Ent(X_2,Y',Y) &\leq \Ent(X_2,Y) + \Ent(Y') \\
&= \Ent(X,Y) + \Ent(Y) \\
\Ent(X_1-X_2,X_1+Y',Y) &\leq \Ent(X_1-X_2|Y) + \Ent(Y) + \Ent(X_1+Y') \\
&= \Ent(X_1-X_2|Y) + \Ent(Y) + \Ent(X+Y)
\end{align*}
Substituting these bounds, we obtain
$$ \Ent(X_2+Y'|Y) \leq \Ent(X_1-X_2|Y) + 2\Ent(Y) + \Ent(X) + \Ent(X+Y) - 2\Ent(X,Y).$$
Applying Lemma \ref{weak-bsg}, \eqref{entk}, \eqref{entk2} we conclude that
$$ \Ent(X_2+Y'|Y) \leq \frac{1}{2} \Ent(X) + \frac{1}{2} \Ent(Y) + 7 \log K$$
and the claim follows from \eqref{entyx}.

\section{Uniformisation}

The main purpose of this section is to establish the following uniformisation bound on groups, as well as an analogous result for coset progressions (see Corollary \ref{coset-prog}).

\begin{theorem}[Uniformisation on a group]\label{uni-thm} Let $G$ be a finite group, let $p_U := \frac{1}{|G|}$ be the uniform distribution on $G$, and let $p \in \Pr_c(G)$ be another distribution, such that
$$ \Ent(p) \geq \log |G| - \log K$$
for some $K \geq 10$.  Then 
$$
\trans(p,p_U) \ll \log K.
$$
\end{theorem}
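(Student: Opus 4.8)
The plan is to build an explicit coupling. It suffices to produce $G$-random variables $X \equiv p$ and $W \equiv p_U$ with $\Ent(W-X) \le \log K + O(1)$: taking $Z := W-X$ we then have $X+Z \equiv p_U$, so $\trans(p,p_U) \le \Ent(Z) \ll \log K$. We may assume $K \le |G| =: N$, since if $K > |G|$ then $\log|G| < \log K$ and the trivial coupling with $W$ uniform and independent of $X$ (so that $Z = W-X$ is uniform) already gives $\trans(p,p_U) \le \log N < \log K$. The coupling is built by viewing $p$ as a ``supply'' measure and $p_U$ as a ``demand'' measure on $G$, each of total mass $1$, and transporting supply onto demand over $d = O(\log N)$ rounds; the variable $W-X$ records, for each infinitesimal unit of transported mass, the group element by which that unit was shifted.

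The core is a one-round covering/routing step. Since $\Ent(p) \ge \log N - \log K$, the inequality $\Ent(p) \le \log|\mathrm{supp}(p)|$ (where $\mathrm{supp}(p) := \{x\in G : p(x)>0\}$) forces $|\mathrm{supp}(p)| \ge N/K$, i.e. $p$ has support of density $\ge 1/K$. We deplete the residual supply in each round in proportion to $p$ itself, so that the residual supply measure stays a scalar multiple of $p$ and hence always has this same support. A first-moment computation then shows that a uniformly random set $T \subseteq G$ with $|T| = CK$ has, for all but an $e^{-C}$-fraction of $w$ (weighted by the residual demand), a point of $\mathrm{supp}(p)$ in the fibre $w-T$; for $C$ a suitable absolute constant, a routine max-flow / Hall argument upgrades this to an honest fractional transport plan that moves at least half of the residual demand using only shifts $x \mapsto x+z$ with $z \in T$ and without overdrawing the residual supply. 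Iterating, the residual demand at least halves each round, so after $d = O(\log N)$ rounds it has mass $\le 1/N$, and we finish with the trivial all-shifts coupling on this negligible remainder.

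For the entropy bound, let $I$ be the (random) round at which a unit of mass is moved; then $P(I=i) =: m_i$ satisfies $m_i \asymp 2^{-i}$ (with one final term of mass $\le 1/N$), and conditioned on $I=i$ the displacement $W-X$ lies in a set $T_i$ with $|T_i| = O(K)$. Since $\Ent(W-X) \le \Ent(I) + \Ent(W-X \mid I) \le \Ent\big((m_i)_i\big) + \sum_i m_i \log|T_i|$, and $\sum_i m_i = 1$, $\sum_i m_i \log|T_i| \le \log(O(K)) = \log K + O(1)$, while $\Ent((m_i)_i) = \sum_i m_i\log(1/m_i) \ll \sum_i i\,2^{-i} = O(1)$ by the geometric decay (the final round of mass $\le 1/N$ spread over $\le N$ shifts contributes $\le \tfrac{1}{N}\log N \le 1$), we conclude $\Ent(W-X) \le \log K + O(1)$, which is $\ll \log K$ since $K \ge 10$.

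The step I expect to be the genuine obstacle is the routing half of the covering step: guaranteeing a feasible transport of a constant fraction of the residual demand along the $T$-shifts even though the residual supply, while spread over a set of density $\ge 1/K$, can be very non-uniform on that set, so that individual demand vertices may see only a minuscule amount of supply in their fibre. Making this rigorous requires either interleaving the peeling with a preliminary ``flattening'' phase (route mass out of the heaviest atoms of $p$ first, until the residual supply is comparable to uniform on its support) or a direct min-cut estimate exploiting the exact uniformity of the demand together with the density-$\ge 1/K$ support of $p$; the passage from the probabilistic covering property of $T$ to an exact fractional plan is then a standard, if tedious, application of the max-flow--min-cut theorem. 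A second, easier point to be careful about is that the deplete-proportionally device is what keeps the residual support large throughout, so that the same random-$T$ covering bound is available at every round.
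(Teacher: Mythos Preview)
Your approach has a genuine gap at exactly the point you flag: the routing step does not follow from the covering property, and neither proposed patch repairs it. The support bound $|\mathrm{supp}(p)|\ge N/K$ is far weaker than the entropy hypothesis and does nothing to prevent the \emph{mass} of $p$ from concentrating on a few atoms.

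Concretely, take $G=\Z/N\Z$, put mass $\tfrac12$ at a single point $x_0$ and mass $1/N$ on each of $N/2$ further points. Then $\Ent(p)=\tfrac12\log 2+\tfrac12\log N$, so the hypothesis holds with $K\asymp\sqrt N$ and indeed $|\mathrm{supp}(p)|\asymp N/K$. Under proportional depletion, round one would have to push $\alpha/2$ out of $x_0$; but $x_0$ reaches only $|T|=CK\asymp\sqrt N$ demand sites, each absorbing $\le 1/N$, so at most $O(N^{-1/2})$ leaves $x_0$ and $\alpha\ll 1$. If instead you abandon proportionality and spend the light half of the supply first, round one succeeds but then the residual supply is essentially $\tfrac12\delta_{x_0}$ against uniform residual demand of mass $\tfrac12$; every subsequent round moves at most $|T|/(2N)=O(N^{-1/2})$, so the geometric decay $m_i\asymp 2^{-i}$ and the bound $\Ent(I)=O(1)$ both collapse. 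Your ``flattening phase'' is not a fix but a restatement: transporting the atom at $x_0$ to uniform on the support is a problem of the same strength as the one you are trying to solve, and the min-cut you invoke is precisely the max-flow just shown to be $O(K/N)$ once the light mass is gone.

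The paper's argument is quite different and sidesteps routing altogether. The engine is an $L^2$-flattening lemma: convolving with a uniformly random Bernoulli shift $\tfrac12(\delta_0+\delta_h)$ halves $\E\|p-p_U\|_{\ell^2}^2$ at entropy cost $\log 2$. What replaces your missing step is a \emph{level-set decomposition}: split $G$ into $A_k=\{x:2^{2^{k-1}}\le Np(x)<2^{2^k}\}$, use the Jensen-type bound $\sum_k 2^k\,\P(X\in A_k)\ll\log K$ to control the masses, and flatten each conditioned piece $(X\mid X\in A_k)$ with $O\bigl(2^k+\log\tfrac{1}{\P(X\in A_k)}\bigr)$ Bernoulli convolutions down to $\|\cdot\|_{\ell^2}\ll|G|^{-1/2}$; summing the weighted costs gives $O(\log K)$, and a separate short argument passes from $\ell^2$-bounded to exactly uniform at $O(1)$ further cost. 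In the example above the atom at $x_0$ sits at level $2^k\asymp\log N\asymp 2\log K$, and the decomposition flattens it in $O(\log K)$ convolutions rather than trying to route it through a thin shift set.
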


Since $\Ent(p_U) = \log |G|$, we see that this is sharp up to constants.  One can view this theorem as a special case of Theorem \ref{iest}, but with significantly better dependence on the constants.

We establish this theorem by a sequence of partial results.  We first record a simple lemma that allows us to ``divide and conquer'' the problem of estimating the transport distance between two random variables.

\begin{lemma}[Transport splitting lemma]\label{trans-split}  Let $G$ be a group, let $X, Y$ be $G$-random variables, and let $S$ be another discrete random variable; we do not assume $X,Y,S$ to be independent.  Then
$$ \trans(X,Y) \leq \Ent(S) + \sum_{s \in \range(S)} p_S(s) \trans( (X|S=s), (Y|S=s) ).$$
\end{lemma}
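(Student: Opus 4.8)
The plan is to construct, for the fixed random variable $X$, an auxiliary random variable $W$ with $X+W \equiv Y$, built by first recording which cell of the partition induced by $S$ we are in, and then transporting within that cell using the conditional distributions. Concretely, I would first use the definition of the transport metric to pick, for each $s \in \range(S)$, a $G$-random variable $Z_s$, defined on the same probability space as $(X \mid S=s)$, with $(X\mid S=s) + Z_s \equiv (Y\mid S=s)$ and $\Ent(Z_s) \leq \trans((X\mid S=s),(Y\mid S=s)) + \eps$ for an arbitrarily small $\eps>0$ (the infimum in Definition \ref{trans-def} need not be attained, so one works with a near-optimal choice and lets $\eps \to 0$ at the end). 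The subtlety here is that $Z_s$ is allowed to be correlated with $X$, which is exactly what we need, since the conditional transport is about the coupling inside each cell.

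Next I would assemble these pieces. Let $S'$ be an independent copy of $S$ coupled so that, conditionally on $S' = s$, the pair $(X, Z)$ has the law of $((X\mid S=s), Z_s)$ — i.e. I bring in an external randomization that selects a cell according to $p_S$ and then runs the corresponding conditional transport. Set $W := Z$. One checks by conditioning on $S'$ that $X + W$ has the same distribution as $Y$: conditionally on $S'=s$ it is distributed as $(Y\mid S=s)$, and averaging over $s$ with weights $p_S(s)$ recovers the law of $Y$ since $S$ and $Y$ have the correct joint law. Hence $\trans(X,Y) \leq \Ent(W)$.

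It remains to bound $\Ent(W)$. The key step is the inequality
$$ \Ent(W) \leq \Ent(S') + \Ent(W \mid S') = \Ent(S) + \sum_{s \in \range(S)} p_S(s)\, \Ent(Z_s), $$
which is just the chain rule $\Ent(W,S') = \Ent(S') + \Ent(W\mid S')$ together with $\Ent(W) \leq \Ent(W,S')$ (from \eqref{ento}/\eqref{entyx} in the appendix, i.e.\ adding a variable cannot decrease entropy), and the fact that $\Ent(W\mid S'=s) = \Ent(Z_s)$ by construction. Substituting $\Ent(Z_s) \leq \trans((X\mid S=s),(Y\mid S=s)) + \eps$ and letting $\eps \to 0$ gives exactly the claimed bound.

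The main obstacle — really the only point requiring care — is the bookkeeping of the coupling: one must make sure the external cell-selector $S'$ is introduced so that $X$ still has its original (marginal) distribution while $(X\mid S'=s)$ matches $(X\mid S=s)$, so that applying the conditional transport $Z_s$ on top of it produces $(Y\mid S=s)$ in cell $s$ and hence $Y$ overall. This is a routine disintegration argument, but it is where one should be explicit, since $X, Z, S'$ are genuinely dependent. Everything else is the chain rule and monotonicity of Shannon entropy, which are recorded in the appendix.
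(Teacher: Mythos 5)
Your proof is correct and follows essentially the same route as the paper: choose near-optimal conditional transports $Z_s$, glue them along the partition induced by $S$, and bound the entropy of the glued variable via $\Ent(Z) \leq \Ent(S) + \Ent(Z|S)$ and the chain rule, letting $\eps \to 0$ at the end. The only quibble is the phrase ``independent copy of $S$'' for your cell-selector $S'$ --- as your own description of the coupling makes clear, $S'$ cannot be independent of $X$ (it is in effect $S$ itself, which is how the paper phrases it), but this is a verbal slip rather than a gap.
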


\begin{proof}  Let $\eps > 0$.  For each $s \in \range(S)$, we can use Definition \ref{trans-def} to select a random variable $Z_s$ conditioned to the event $S=s$ of entropy $\Ent(Z_s) \leq \trans( (X|S=s), (Y|S=s) ) + \eps$ such that $(X+Z_s|S=s) \equiv (Y|S=s)$.  If we then let $Z$ be the random variable whose conditioning to $S=s$ equals $Z_s$, then $X+Z \equiv Y$, and (by \eqref{ento})
$$ \Ent(Z) \leq \Ent(S) + \Ent(Z|S) = \Ent(S) + \sum_{s \in \range(S)} p_S(s) \Ent(Z|S=s).$$
Putting all this together, we conclude that
$$ \trans(X,Y) \leq \Ent(S) + \sum_{s \in \range(S)} p_S(s) \trans( (X|S=s), (Y|S=s) ) + |\range(S)| \eps.$$
Since $\eps$ was arbitrary, the claim follows.
\end{proof}

Next, we show that one can converge exponentially fast to the uniform distribution in the $L^2$ sense.

\begin{lemma}[$L^2$ flattening lemma]\label{lflat}  Let $G$ be a finite group, let $p_U := \frac{1}{|G|}$ be the uniform distribution on $G$, and let $p \in \Pr_c(G)$ be another distribution.  Then for any integer $k \geq 1$, one can find a distribution $p_k \in \Pr_c(G)$ such that
$$ \trans( p, p_k ) \leq k \log 2$$
and
$$ \| p_k - p_U \|_{\ell^2(G)} \leq 2^{-k/2} \| p - p_U \|_{\ell^2(G)}.$$
\end{lemma}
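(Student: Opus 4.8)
The plan is to produce $p_k$ by iterated convolution of $p$ with itself, but symmetrised so that the transport cost is controlled. The key observation is that if $p * q$ denotes the distribution of $X + Y$ for independent $X \sim p$, $Y \sim q$, then convolution with $p$ (or with its ``average'' against a uniform bit) moves us closer to $p_U$ in $\ell^2$, while the transport distance only grows by a bounded amount per step. Concretely, I would first record the operator-theoretic fact that, on a finite abelian group (and one reduces to the abelian case, or works directly with the regular representation for the general case — but for flattening one only needs the convolution structure, which makes sense for any finite group), convolution by $p$ on the orthogonal complement of the constants is a contraction: writing $p = p_U + f$ with $\sum_x f(x) = 0$, one has $p * q - p_U = f * (q - p_U)$, so $\|p*q - p_U\|_{\ell^2} \le \|\widehat{f}\|_{\ell^\infty} \|q - p_U\|_{\ell^2} \le \|f\|_{\ell^1} \|q-p_U\|_{\ell^2}$ in the abelian case, and more robustly $\|p * q - p_U\|_{\ell^2} \le \|q - p_U\|_{\ell^2}$ since convolution by a probability measure is an $\ell^2$-contraction on the whole space and fixes $p_U$.

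That gives contraction but not the crucial factor $2^{-k/2}$. To get a genuine \emph{strict} contraction by a fixed ratio at \emph{every} step, regardless of how close $p$ already is to spread out, the trick is to convolve not with $p$ itself but with $\frac{1}{2}(\delta_0 + p_g)$-type averages, or more simply: at each step replace the current distribution $q$ by $\frac12 q + \frac12 (q * q)$, or by the distribution of $X + \eps Y$ where $\eps$ is a uniform bit — any device that, in Fourier terms, replaces each character value $\widehat{q}(\xi)$ by something of strictly smaller modulus unless $\widehat{q}(\xi) = 0$. The cleanest choice: let $q_{j+1}$ be the law of $X_j + \eps_j (X'_j - X''_j)$ where $X_j, X'_j, X''_j \sim q_j$ independent and $\eps_j$ a uniform $\{0,1\}$ bit independent of everything; then $\widehat{q_{j+1}}(\xi) = \widehat{q_j}(\xi) \cdot \frac12(1 + |\widehat{q_j}(\xi)|^2)$, and since $|\widehat{q_j}(\xi)| \le 1$ we get $|\widehat{q_{j+1}}(\xi)| \le |\widehat{q_j}(\xi)|$, but this still is not uniformly $\le \tfrac12$. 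The honest fix is to convolve with $p_U$ on a subgroup, or simply to accept a slightly different formulation; since the statement as written demands $2^{-k/2}$, I would instead define $p_k$ to be the law of $\frac{1}{2^k}(X_1 + \cdots)$...

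Reconsidering: the correct and simple construction realising exactly the stated bound is to take $p_1$ to be the law of $X + \eps$ where $\eps$ is uniform on $\{0, h\}$ for a well-chosen $h$, but even better — \emph{average over a random group element}. Let me commit to the following: at step $j$, let $g_j$ be uniform on $G$ and set $p_{j+1}$ to be a suitable mix. Actually the textbook move is: if $p = p_U + f$, then the law of $X + Y$ with $X \sim p$, $Y \sim \frac12 \delta_0 + \frac12 p_U$ has error $\frac12 f$, halving the $\ell^1$-normalised... no. The genuinely correct halving: convolve with $\frac12(\delta_0 + u_K)$ where $u_K$ is uniform on a large subgroup $K$ with $\widehat{u_K} = \mathbf 1_{K^\perp}$; then the error on $K^\perp$ is unchanged but on the complement it is killed, which does not monotonically halve either. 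Given the delicacy, the \textbf{main obstacle} I anticipate is precisely pinning down a one-step operation that (a) costs at most $\log 2$ in transport distance — which forces it to be ``convolve with a boolean variable'' or ``condition on one bit'' — and (b) contracts the $\ell^2$ distance to $p_U$ by the fixed factor $2^{-1/2}$ uniformly. I would resolve this by a dyadic/spectral decomposition of $G$: choose at each stage a subgroup (or, in the non-abelian case, use that $\ell^2(G)$ with the regular representation decomposes), split off one ``bit'' of information about which coset $X$ lies in, replace $X$ within its coset by the uniform distribution there — paying $\log 2$ for the forgotten bit — and verify via Parseval that the mass of $p - p_U$ supported on the corresponding spectral block is eliminated while the rest is untouched, so that after $k$ steps at most a $2^{-k}$ fraction of the $\ell^2$-energy survives, giving the $2^{-k/2}$ bound on the norm. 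The transport splitting lemma (Lemma \ref{trans-split}) is exactly the tool to bound the cost of each such conditioning step by $\log 2$ plus the conditional transport cost (which is $0$ once we pass to the uniform distribution on the coset), and summing the $k$ steps gives $\trans(p, p_k) \le k \log 2$.
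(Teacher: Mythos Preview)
You actually write down the correct construction --- ``take $p_1$ to be the law of $X + \eps$ where $\eps$ is uniform on $\{0,h\}$ for a well-chosen $h$'' --- and then abandon it. That \emph{is} the paper's proof: for $k=1$, set $p_1(x) = \tfrac12(p(x)+p(x-h))$, which is $p$ convolved with a Bernoulli variable of entropy $\log 2$, so $\trans(p,p_1)\le \log 2$. The missing step is how to choose $h$, and the answer is the first-moment method: writing $f := p - p_U$ (so $\sum_x f(x)=0$), one computes
\[
\E_h \|p_1 - p_U\|_{\ell^2(G)}^2 \;=\; \E_h \sum_x \tfrac14\bigl(f(x)+f(x-h)\bigr)^2 \;=\; \tfrac12\|f\|_{\ell^2(G)}^2 + \tfrac12 \E_h \sum_x f(x)f(x-h),
\]
and the cross term vanishes because $\E_h f(x-h) = \tfrac{1}{|G|}\sum_y f(y)=0$. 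Hence some $h$ achieves $\|p_1-p_U\|_{\ell^2}^2 \le \tfrac12\|p-p_U\|_{\ell^2}^2$, and induction on $k$ finishes.

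The approach you ultimately commit to --- peeling off a bit by conditioning on a coset of an index-$2$ subgroup and ``replacing $X$ within its coset by the uniform distribution there'' --- has genuine gaps. First, the transport-splitting lemma gives $\trans(X,Y) \le \Ent(S) + \sum_s p_S(s)\,\trans\bigl((X|S=s),(Y|S=s)\bigr)$, and the conditional term $\trans\bigl((X|S=s),\,\text{uniform on coset }s\bigr)$ is \emph{not} zero; that is precisely the uniformisation cost you are trying to bound, so the argument is circular. Second, an arbitrary finite group need not have any subgroup of index $2$ (e.g.\ $\Z/p\Z$ for odd prime $p$), so the coset-peeling scheme cannot even get started in general. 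Third, even when such a chain exists, there is no reason each step should kill at least half of the $\ell^2$ energy of $p-p_U$. The remedy is exactly the averaging trick you skipped: no spectral analysis or subgroup structure is needed, because averaging over \emph{all} $h\in G$ already halves the squared $\ell^2$ distance.
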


\begin{proof}  By induction it suffices to verify the case $k=1$.  We use the first moment method.  Let $h$ be chosen uniformly at random from $G$, and let
$$ p_1(x) := \frac{1}{2} (p(x) + p(x-h)).$$
Clearly $p_1$ is the convolution of $p$ with a Bernoulli variable of entropy $\log 2$, and so $\trans(p,p_1) \leq \log 2$.  On the other hand, a straightforward calculation using $\sum_{x \in G} p(x)=1$ reveals the identity
$$ \E_h \| p_1 - p_U \|_{\ell^2(G)}^2 = \frac{1}{2} \| p - p_U \|_{\ell^2(G)}^2$$
and the claim follows.
\end{proof}

We now combine these lemmas to pass to an $\ell^2$-bounded random variable.

\begin{lemma}[Entropy-uniform to $\ell^2$-bounded]\label{entl2}  Let $G$ be a finite group, and let $p \in \Pr_c(G)$ be such that
$$ \Ent(p) \geq \log |G| - \log K$$
for some $K \geq 10$.  Then there exists $q \in \Pr_c(G)$ with
$$
\trans(p,q) \ll \log K
$$
and
$$
\| q \|_{\ell^2(G)} \ll 1/|G|^{1/2}.$$
\end{lemma}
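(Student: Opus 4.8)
The plan is to combine the $L^2$ flattening lemma (Lemma \ref{lflat}) with the transport splitting lemma (Lemma \ref{trans-split}), using the entropy hypothesis $\Ent(p) \geq \log|G| - \log K$ to control the (rare) set where the flattened distribution is still much larger than $1/|G|$.  The key quantitative point is that flattening drives the $\ell^2$-distance to uniform down geometrically, at a cost of only $\log 2$ per step in transport distance; so after $k \asymp \log K$ steps we obtain $p_k$ with $\trans(p,p_k) \ll \log K$ and $\|p_k - p_U\|_{\ell^2}^2 \leq 2^{-k}\|p - p_U\|_{\ell^2}^2 \leq 2^{-k}/|G|$ (using the trivial bound $\|p-p_U\|_{\ell^2}^2 \leq \|p\|_{\ell^2}^2 \leq \|p\|_{\ell^1}\|p\|_{\ell^\infty} \leq 1$, or more simply $\|p-p_U\|_{\ell^2} \leq \|p\|_{\ell^2} \leq 1$).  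Choosing $k$ to be a suitable multiple of $\log K$ makes $\|p_k - p_U\|_{\ell^2}^2 \leq K^{-C}/|G|$ for as large a constant $C$ as we wish.

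However, $p_k$ need not itself be pointwise bounded by $O(1/|G|)$: small $\ell^2$-distance to uniform only controls the \emph{bulk}, not the tail.  To fix this, first I would observe that flattening cannot have destroyed much entropy: since $\trans(p,p_k) \ll \log K$ and entropy changes by at most the transported entropy (from \eqref{ent-sum}, applied to $p_k = p + Z$ and to $p = p_k - Z$), we still have $\Ent(p_k) \geq \log|G| - O(\log K)$.  Now decompose $G$ according to whether $p_k(x)$ is ``large'' (say $p_k(x) > M/|G|$ for a threshold $M$ to be chosen $\asymp K^{O(1)}$) or not.  The $\ell^2$ bound gives $\sum_{x:\, p_k(x) > M/|G|} (p_k(x) - 1/|G|)^2 \leq K^{-C}/|G|$, which by Cauchy--Schwarz forces the large set to have total mass $\sum_{x:\, p_k(x) > M/|G|} p_k(x)$ that is very small — of size $O(K^{-C'})$ or so — provided $M$ is a suitable power of $K$.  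I would then apply the transport splitting lemma (Lemma \ref{trans-split}) with $S$ the indicator of the event $\{p_k(X) > M/|G|\}$: conditioned on the (overwhelmingly likely) event that $p_k(X)$ is not large, $p_k$ restricted and renormalised is pointwise $\ll M/|G| = K^{O(1)}/|G|$, and we simply \emph{accept} this restricted distribution as our final $q$ (transport distance zero from it to itself); conditioned on the rare event, we transport arbitrarily within $G$, at cost at most $\log|G|$, but weighted by the tiny probability $O(K^{-C'})$, and $\Ent(S) = O(1)$.  For this last contribution to be $\ll \log K$ we need the rare-event probability times $\log|G|$ to be controlled, which is where the entropy hypothesis re-enters: $\Ent(p_k) \geq \log|G| - O(\log K)$ together with the mass on the large set being $\leq \delta$ forces (by the standard estimate $\Ent(p_k) \leq (1-\delta)\log(|G|/(1-\delta)) + \delta \log(|G|/\delta) + O(1) \leq \log|G| - \delta\log(1/\delta) + O(1)$, i.e. bounding the entropy of a distribution with a $\delta$-fraction of mass concentrated somewhere) — actually more directly, since $q$ is $\ll K^{O(1)}/|G|$ pointwise it has $\Ent(q) \geq \log|G| - O(\log K)$ on its own, so it suffices to bound $\delta \log|G|$, and $\delta$ is controlled by comparing $\Ent(p_k)$ to $\Ent(q)$ via the splitting, closing the loop.

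Concretely, the renormalised restriction $q$ has $\|q\|_{\ell^\infty} \leq \frac{1}{1-\delta}\cdot \frac{M}{|G|} \ll \frac{K^{O(1)}}{|G|}$, hence $\|q\|_{\ell^2(G)}^2 \leq \|q\|_{\ell^1}\|q\|_{\ell^\infty} \ll K^{O(1)}/|G|$, giving $\|q\|_{\ell^2(G)} \ll K^{O(1)}/|G|^{1/2}$; and the transport distance $\trans(p,q) \leq \trans(p,p_k) + \trans(p_k,q) \ll \log K + \Ent(S) + \delta\log|G| \ll \log K$ once $\delta \log|G|$ is shown to be $O(\log K)$.  The main obstacle — and the step requiring the most care — is precisely this tail control: squeezing out the fact that the flattened $p_k$, while $\ell^2$-close to uniform, cannot have more than an $O(K^{-c})$ fraction of its mass living on a pointwise-heavy set, and then ensuring that discarding that mass costs only $O(\log K)$ in transport rather than something like $\delta\log|G|$ with $\delta$ only polynomially small in $K$ but $|G|$ arbitrarily large.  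The resolution is to take the flattening parameter $k$ large enough (a large multiple of $\log K$) that $\delta$ beats any fixed power of $K$, but that alone is not enough since $\log|G|$ is unbounded; the genuine fix is that we do \emph{not} bound $\delta\log|G|$ by brute force but instead relate it to the entropy deficit of $q$, which is $O(\log K)$ by the pointwise bound — so the rare branch in the splitting lemma can in fact be absorbed.  I would organise the write-up so that the threshold $M = K^{A}$ and flattening length $k = B\log K$ are chosen at the end, after all the inequalities have been collected, with $A, B$ absolute constants.
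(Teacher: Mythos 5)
There is a genuine gap at the very first step, and it is fatal to the whole strategy. Your chain $\|p_k - p_U\|_{\ell^2}^2 \leq 2^{-k}\|p-p_U\|_{\ell^2}^2 \leq 2^{-k}/|G|$ requires $\|p - p_U\|_{\ell^2(G)}^2 \leq 1/|G|$, but the trivial bound you cite only gives $\|p - p_U\|_{\ell^2}^2 \leq 1$, and the stronger bound is false under the hypotheses. For example (up to harmless adjustments of constants), take $G = \Z/N\Z$ and $p = \delta \cdot 1_{x=0} + \frac{1-\delta}{N-1} 1_{x \neq 0}$ with $\delta \asymp \log K/\log N$: then $\Ent(p) \geq \log N - \log K$, but $\|p - p_U\|_{\ell^2}^2 \asymp \delta^2 \asymp (\log K/\log N)^2$, which exceeds $K^{-C}/|G|$ for every fixed $C$ once $N$ is large. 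Consequently, after $k \asymp \log K$ flattening steps you only reach $\|p_k - p_U\|_{\ell^2}^2 \ll K^{-O(1)}(\log K/\log N)^2$, nowhere near $K^{-C}/|G|$; to actually reach $O(1/|G|)$ you would need $k \gtrsim \log|G|$ steps, at which point $\trans(p,p_k) \gtrsim \log |G|$ and the transport budget is blown. Everything downstream (the Chebyshev/Cauchy--Schwarz bound on the mass of the heavy set, the absorption of the rare branch) depends on the squared $\ell^2$ distance carrying that factor of $1/|G|$, so the argument does not close.

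The missing idea is that the flattening must be applied \emph{separately} to the level sets of $p$, with the number of steps adapted to the height of each level, so that the expensive flattening of the very heavy levels is weighted by their small probability. Concretely, the paper conditions on the events $E_k = \{X \in A_k\}$ with $A_k := \{x: 2^{2^{k-1}}/|G| \leq p(x) < 2^{2^k}/|G|\}$, flattens the $k$-th conditioned piece for $O(2^k + \log\frac{1}{\P(E_k)})$ steps (enough, since that piece has $\ell^2$ norm at most $2^{2^k}/(\P(E_k)|G|^{1/2})$), glues via Lemma \ref{trans-split}, and then sums the weighted costs $\P(E_k)(2^k + \log\frac{1}{\P(E_k)})$ using the refined Jensen bound $\sum_k 2^k \P(X \in A_k) \ll \log K$ of Lemma \ref{jensen} --- an input your argument never invokes, and which is exactly what converts the entropy hypothesis into an affordable total transport cost. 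With that decomposition in hand the resulting $q$ is automatically $\ell^2$-bounded, and no separate tail-removal step of the kind you describe in your second paragraph is needed.
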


\begin{proof}  The basic idea here is to flatten all the regions of $G$ in which $X$ has an abnormally high probability density.

By Lemma \ref{jensen}, we have
\begin{equation}\label{gack}
 \sum_{k=1}^\infty 2^k \P( X \in A_k ) \ll \log K
\end{equation}
where $A_k$ are the sets 
$$ A_k := \{ x \in G: \frac{2^{2^{k-1}}}{|G|} \leq p(x) < \frac{2^{2^k}}{|G|} \}$$
for $k \geq 1$, and then set $A_0 := G \backslash \bigcup_{k=1}^\infty A_k$, thus the $A_0, A_1, \ldots$ partition $G$ (and thus only finitely many are non-empty).

Let $X$ be a random variable with distribution $p$.
For each $k \geq 0$, let $E_k$ be the event that $X \in A_k$, thus the $E_k$ partition probability space.  From \eqref{gack} we have
\begin{equation}\label{gack-2}
\sum_{k=1}^\infty 2^k \P(E_k) \ll \log K.
\end{equation}

Suppose that $k \geq 1$ is such that $E_k$ has positive probability.  Then we can define $X_k$ to be the random variable $X_k := (X|E_k)$.  Observe that $p_{X_k}$ is bounded above by $\frac{2^{2^k}}{\P(E_k) |G|}$, so we have the crude bound
$$ \| p_{X_k} \|_{\ell^2(G)} \leq \frac{2^{2^k}}{\P(E_k) |G|^{1/2}}.$$
Applying Lemma \ref{lflat} (with $k$ replaced by (say) $2 \times (2^k + \log \frac{1}{\P(E_k)})$), one can thus find $q_k \in \Pr_c(G)$ such that
\begin{equation}\label{soso}
 \trans(p_{X_k}, q_k) \ll 2^k + \log \frac{1}{\P(E_k)}
 \end{equation}
and
\begin{equation}\label{qkpu}
 \| q_k - p_U \|_{\ell^2(G)} \leq 1 / |G|^{1/2}.
\end{equation}
(Indeed, one could even gain a factor of $2^{-2^k}$ on the right-hand side of \eqref{qkpu}, though this turns out to be unnecessary for the current argument.)

Now set $q \in \Pr_c(G)$ to be the probability distribution
\begin{equation}\label{pxy}
 q = 1_{E_0} p_X + \sum_{k=1}^\infty \P( E_k ) q_k .
\end{equation}
Observe that $p_X$ is bounded by $2/|G|$ on $E_0$.  From \eqref{pxy}, \eqref{qkpu} and the triangle inequality we conclude that
$$ \| q \|_{\ell^2(G)} \ll 1/|G|^{1/2}.$$
From Lemma \ref{trans-split} (setting $S$ to be the random variable induced by the partition $E_k$), we see that
\begin{equation}\label{entys}
\trans(p,q) \leq \sum_{k=0}^\infty \P(E_k) \log \frac{1}{\P(E_k)} + \sum_{k=1}^\infty \P(E_k) \trans(p_{X_k}, q_k).
\end{equation}
From \eqref{entys}, \eqref{soso}, \eqref{gack-2} we conclude that
$$ \trans(p,q) \ll \log K + \sum_{k=0}^\infty \P(E_k) \log \frac{1}{\P(E_k)}.$$
But from \eqref{gack-2}, $\P(E_k) \ll 2^{-k} \log K$, and so 
$$\P(E_k) \log \frac{1}{\P(E_k)} \ll (1+k) 2^{-k} \log K,$$
and thus
$$ \sum_{k=0}^\infty \P(E_k) \log \frac{1}{\P(E_k)} \ll \log K$$
and the claim follows.
\end{proof}

From the triangle inequality, it is now clear that Theorem \ref{uni-thm} follows from Lemma \ref{entl2}, Lemma \ref{lflat}, and

\begin{lemma}[$\ell^2$-bounded to uniform]  Let $G$ be a finite group, and let $p \in \Pr_c(G)$ be such that $\| p - p_U \|_{\ell^2(G)} \leq 1/|G|^{1/2}$.  Then $\trans(p, p_U) \ll 1$.
\end{lemma}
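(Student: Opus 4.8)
The plan is to construct an explicit $G$-random variable $Z$ with $X+Z\equiv p_U$ when $X$ has distribution $p$, and to bound $\Ent(Z)$ by an absolute constant; this bounds $\trans(p,p_U)$ by Definition \ref{trans-def}. The only facts I will use from the hypothesis are $\sum_x (p(x)-1/|G|)_+^2\le 1/|G|$ and $\sum_y (1/|G|-p(y))_+^2\le 1/|G|$, both immediate from $\|p-p_U\|_{\ell^2(G)}^2\le 1/|G|$, together with $\delta:=\sum_x (p(x)-1/|G|)_+=\tfrac12\|p-p_U\|_{\ell^1(G)}\le\tfrac12|G|^{1/2}\|p-p_U\|_{\ell^2(G)}\le\tfrac12$ by Cauchy--Schwarz.

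I would build the coupling $\pi$ (with first marginal $p$, second marginal $p_U$) greedily, taking $Z=Y-X$ for $(X,Y)\sim\pi$. Step $0$ peels off the diagonal: put $Z=0$ with the maximal allowed probability $M:=\sum_x\min(p(x),1/|G|)=1-\delta$, leaving a residual excess $e(x):=(p(x)-1/|G|)_+$ supported on $B^+:=\{p>1/|G|\}$ and a residual deficit $d(y):=(1/|G|-p(y))_+$ supported on $B^-:=\{p<1/|G|\}$; these are disjointly supported, each of total mass $\delta$, and $d\le 1/|G|$ pointwise. Steps $j\ge 1$ route the excess onto the deficit: with residuals $e^{(j)},d^{(j)}$ of common total mass $\rho_j$ (so $\rho_1=\delta$), I pick $z_j\in G$ maximising $\mu_j:=\sum_x\min(e^{(j)}(x),d^{(j)}(x+z_j))$ (the maximum exists since $G$ is finite), transport $\mu_j$ units of mass in direction $z_j$, and iterate. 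Residuals only decrease, $\rho_j\to 0$ (immediate once the key bound below is in hand), so the limit is a bona fide coupling, and since $F$ is subadditive ($F(a+b)\le F(a)+F(b)$) the resulting $Z$ obeys $\Ent(Z)\le F(M)+\sum_{j\ge1}F(\mu_j)$.

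The heart of the argument, and the step I expect to be the main obstacle, is the lower bound
$$\mu_j\ \ge\ \tfrac14\,\rho_j^3\qquad(j\ge 1).$$
I would obtain it by first averaging over the direction: $\mu_j\ge\frac1{|G|}\sum_{x,y}\min(e^{(j)}(x),d^{(j)}(y))$, because the average of $\sum_x\min(e^{(j)}(x),d^{(j)}(x+z))$ over $z\in G$ equals the right-hand side. Then I split $\{e^{(j)}>0\}$ into $\{e^{(j)}\ge 1/|G|\}$ and $\{0<e^{(j)}<1/|G|\}$ and argue on whichever carries at least half of the mass $\rho_j$. In the first case Cauchy--Schwarz against $\sum_x (e^{(j)}(x))^2\le 1/|G|$ forces that set to have cardinality $\ge\rho_j^2|G|/4$, and there $\min(e^{(j)}(x),d^{(j)}(y))=d^{(j)}(y)$ (as $d^{(j)}\le 1/|G|$), so $\sum_{x,y}\min\ge(\rho_j^2|G|/4)\cdot\rho_j$. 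In the second case the elementary inequality $\min(a,b)\ge |G|ab$ for $a,b\le 1/|G|$ lets me factor the double sum into $|G|\cdot(\text{mass of }e^{(j)}\text{ there})\cdot\rho_j\ge\tfrac12|G|\rho_j^2$, which is $\ge\tfrac14|G|\rho_j^3$ since $\rho_j\le 1$. Either way $\mu_j\ge\tfrac14\rho_j^3$; in particular $\rho_{j+1}\le\rho_j-\tfrac14\rho_j^3$, forcing $\rho_j\to 0$.

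Finally I would assemble the entropy bound. From $\mu_j\ge\tfrac14\rho_j^3$ we get $\log(1/\mu_j)\le\log 4+3\log(1/\rho_j)$, hence
$$\sum_{j\ge1}F(\mu_j)\ \le\ (\log 4)\sum_{j\ge1}\mu_j\ +\ 3\sum_{j\ge1}(\rho_j-\rho_{j+1})\log\tfrac1{\rho_j}\ \le\ (\log 4)\,\rho_1\ +\ 3\int_0^{\rho_1}\log\tfrac1\rho\,d\rho,$$
using $\sum_{j\ge1}\mu_j=\rho_1$ and monotonicity of $\log(1/\cdot)$ for the Riemann-sum comparison. As $\rho_1=\delta\le\tfrac12$ and $F(M)=F(1-\delta)\le\delta$, the right-hand side of $\Ent(Z)\le F(M)+\sum_{j\ge1}F(\mu_j)$ is an absolute constant, so $\trans(p,p_U)\le\Ent(Z)\ll 1$. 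The remaining work is routine: verifying the averaging identity, the two $\min$-inequalities, the subadditivity of $F$, and the convergence of $\pi$ to a valid coupling.
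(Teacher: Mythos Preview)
Your argument is correct and complete; the key inequality $\mu_j\ge\tfrac14\rho_j^3$ and the subsequent entropy summation both check out (the averaging identity, the two case inequalities for $\min$, the pointwise monotonicity $e^{(j)}\le e^{(1)}$, $d^{(j)}\le d^{(1)}\le 1/|G|$, and the Riemann-sum bound are all fine).

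Your route is genuinely different from the paper's. The paper does not build an explicit single coupling. Instead it defines $C_G:=\sup\{\trans(p,p_U):\|p-p_U\|_{\ell^2}\le|G|^{-1/2}\}$ and proves a recursive inequality $C_G\le\tfrac12 C_G+O(1)$. It first applies the $L^2$ flattening lemma (Lemma \ref{lflat}) to replace $p$ by some $q$ with $\|q-p_U\|_{\ell^2}\le 2^{-k/2}|G|^{-1/2}$ at transport cost $O(k)$, then writes $q=p_U+\sigma q_+-\sigma q_-$ with $\sigma\ll 2^{-k/2}$, couples via the splitting Lemma \ref{trans-split}, flattens $q_\pm$ back into the $\ell^2$-ball, and invokes $C_G$ on those. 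Choosing $k$ large closes the bootstrap. So the paper's proof is recursive and leans on random averaging (convolution with Bernoulli shifts), whereas yours is a direct, deterministic greedy transport that never leaves the given $p$. Your approach is more self-contained and yields an explicit coupling; the paper's approach is more modular, reusing the flattening lemma and the transport splitting lemma that are already needed elsewhere in the section, and it avoids the cubic-decay analysis by trading it for a doubling-type recursion.
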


\begin{proof}  The idea is to manually transport away the most severe irregularities in the distribution of $p$ to obtain a new distribution that is significantly closer to uniform in the $\ell^2$ norm, and then iterate.

Let $C_G$ be the supremum of $\trans(p,p_U)$ for all $p \in \Pr_c(G)$ with $\| p - p_U \|_{\ell^2(G)} \leq 1/|G|^{1/2}$.  It is easy to see that $C_G$ is finite for any fixed finite $G$; our task is to show that $C_G \ll 1$ (uniformly in $G$).

Let $k \geq 1$ be a large integer to be chosen later.  Let $p \in \Pr_c(G)$ be such that $\| p - p_U \|_{\ell^2(G)} \leq 1/|G|^{1/2}$, then by Lemma \ref{lflat} one can find $q \in \Pr_c(G)$ with $\trans(p,q) \leq k \log 2$ and
\begin{equation}\label{kewpie}
\| q - p_U \|_{\ell^2(G)} \leq 2^{-k/2} / |G|^{1/2}.
\end{equation}
If $q \equiv p_U$, then we have $\trans(p,p_U) \leq k \log 2$, so suppose instead that $q$ is not identically equal to $p_U$.  Then the quantity
$$ \sigma := \sum_{x \in G: q(x) > p_U(x)} q(x) - p_U(x) = \sum_{x \in G: q(x) < p_U(x)} p_U(x) - q(x)$$
is non-zero; from \eqref{kewpie} and the Cauchy-Schwarz inequality we also have
\begin{equation}\label{sigma-bound}
\sigma < 2^{-k/2}.
\end{equation}

Let $q_+, q_- \in \Pr_c(G)$ be the probability distributions defined by
$$ q_+(x) := \frac{1}{\sigma} 1_{q(x) > p_U(x)} (q(x) - p_U(x))$$
and
$$ q_-(x) := \frac{1}{\sigma} 1_{q(x) < p_U(x)} (p_U(x) - q(x)),$$
thus
$$ q = p_U + \sigma q_+ - \sigma q_-.$$
We can thus build a random variable $X$ with distribution $q$ by creating a boolean random variable $S \in \{0,1\}$ with $p_S(1) = \sigma$, then setting $(X|S=1)$ to have distribution $q_+$ and $(X|S=0)$ to have distribution $\frac{1}{1-\sigma} (p_U - \sigma q_-)$.  If we let $Y$ be a random variable with $(Y|S=1)$ having distribution $q_-$ and $(Y|S=0)$ having distribution $\frac{1}{1-\sigma} (p_U - \sigma q_-)$, we see that $Y \equiv p_U$.  From Lemma \ref{trans-split} we conclude that
$$ \trans(q,p_U) \leq \Ent(S) + \sigma \trans(q_+,q_-) \ll \sigma \log \frac{1}{\sigma} + \sigma \trans(q_+,q_-).$$
Now we estimate $\trans(q_+,q_-)$.  From \eqref{kewpie} we see that
$$ \| q_+ \|_{\ell^2(G)}, \|q_-\|_{\ell^2(G)} \ll \frac{2^{-k/2}}{\sigma |G|^{1/2}}.$$
Applying Lemma \ref{lflat}, one can find $r_+, r_- \in \Pr_c(G)$ with $\|r_\pm - p_U \|_{\ell^2(G)} \leq 1/|G|^{1/2}$ such that
$$ \trans(q_\pm, r_\pm) \ll 1 + \log \frac{2^{-k/2}}{\sigma}.$$
By definition of $C_G$, we then have
$$ \trans(r_\pm, p_U) \ll C_G.$$
Putting all this together using the triangle inequality, we see that
$$ \trans(p,p_U) \ll k + \sigma \log \frac{1}{\sigma} + \sigma( C_G + \log \frac{2^{-k/2}}{\sigma} ).$$
Taking the worst-case value of $\sigma$ using \eqref{sigma-bound} we conclude
$$ \trans(p,p_U) \ll k + 2^{-k/2} C_G $$
and thus on taking suprema in $p$
$$ C_G \ll k + 2^{-k/2} C_G.$$
Setting $k$ sufficiently large we conclude
$$ C_G \leq \frac{1}{2} C_G + O(1)$$
and the claim follows.
\end{proof}

\begin{corollary}[Uniformisation on coset progressions]\label{coset-prog}  Let $H+P$ be a proper coset progression of rank $d$ in some additive group $G$, and let $p \in \Pr_c(H+P)$ be such that
$$ \Ent(p) \geq \log |H+P| - \log K$$
for some $K \geq 10$.  Let $p_U$ be the uniform distribution on $H+P$.  Then
\begin{equation}\label{transpu}
\trans(p, p_U) \ll \log K + d.
\end{equation}
\end{corollary}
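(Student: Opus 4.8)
The plan is to deduce Corollary~\ref{coset-prog} from Theorem~\ref{uni-thm} by passing from the coset progression $H+P$ to an honest finite group in which $H+P$ sits as a dense subset, applying the group uniformisation bound there, and then transporting back. The key structural fact about a proper coset progression of rank $d$ is that it is (Freiman) isomorphic to a box $H \times [0,N_1) \times \cdots \times [0,N_d)$, and such a box embeds with bounded density into the product group $G' := H \times \Z/M_1\Z \times \cdots \times \Z/M_d\Z$ where each $M_i := 2N_i$ (say); indeed $|G'| = 2^d |H+P|$ by properness, so $H+P$ is a subset of $G'$ of density $2^{-d}$. Crucially, the transport metric is preserved under Freiman isomorphisms of the ambient domain, since it is defined purely in terms of sums staying inside the relevant set and of entropies of the difference variables, all of which are transported verbatim by the isomorphism (one must check the isomorphism is defined on $2(H+P)$, which it is, up to shrinking constants, because the progression is proper and we can work on a slightly smaller sub-box if needed).

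First I would set up the embedding $\iota: H+P \hookrightarrow G'$ as above and push forward the distribution $p$ to a distribution $\tilde p \in \Pr_c(G')$ supported on $\iota(H+P)$. Then $\Ent(\tilde p) = \Ent(p) \geq \log|H+P| - \log K = \log|G'| - \log(2^d K)$, so writing $K' := 2^d K \geq 10$ we are exactly in the hypothesis of Theorem~\ref{uni-thm}, which gives $\trans(\tilde p, p_{U'}) \ll \log K' = \log K + d\log 2 \ll \log K + d$, where $p_{U'}$ is the uniform distribution on all of $G'$. Next I would transport from the uniform distribution on $G'$ down to the uniform distribution on the dense subset $\iota(H+P)$: since $\iota(H+P)$ has density $2^{-d}$ in $G'$, the uniform distribution $p_{U'}$ on $G'$ can be written as a mixture $2^{-d}\, p_{\iota(U)} + (1-2^{-d})\, p'$ for some $p' \in \Pr_c(G')$, and applying Lemma~\ref{trans-split} with $S$ the indicator of landing in $\iota(H+P)$ gives $\trans(p_{\iota(U)}, p_{U'}) \leq \Ent(S) + (\text{conditional transport terms})$; the conditional transport from the $\iota(H+P)$-part of $p_{U'}$ to $p_{\iota(U)}$ is zero since they agree, so this is just $\Ent(S) = O(1)$. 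Actually it is cleaner to go the other direction and bound $\trans(p_{\iota(U)}, p_{U'})$ directly: $p_{U'}$ is the convolution of $p_{\iota(U)}$ (after a harmless translation) with something of bounded entropy, so one gets an $O(d)$ or $O(1)$ bound; I would use whichever orientation Lemma~\ref{trans-split} most directly supplies, noting that transport is not symmetric so one must be a little careful which of $\trans(p_{U'}, p_{\iota(U)})$ or $\trans(p_{\iota(U)}, p_{U'})$ one actually needs.

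Chaining these with the triangle inequality for $\trans$ (valid up to translations, which is all we need since the final statement is about distributions), I obtain $\trans(\tilde p, p_{\iota(U)}) \leq \trans(\tilde p, p_{U'}) + \trans(p_{U'}, p_{\iota(U)}) \ll \log K + d$, and then pulling back along $\iota^{-1}$ gives $\trans(p, p_U) \ll \log K + d$ as desired.

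The main obstacle I anticipate is the bookkeeping around properness and Freiman isomorphisms: the naive map $H+P \to H \times \Z/2N_1\Z \times \cdots$ is only a Freiman isomorphism of order $2$ (i.e.\ respects a single sumset $2(H+P)$), and one needs to confirm that this is exactly the structure the transport metric respects — namely that if $X$ takes values in $H+P$ and $X+Z \equiv Y$ with $Y$ also in $H+P$, then the pushforwards satisfy the same relation in $G'$, which uses that $X+Z$ lands in $H+P \subset 2(H+P)$ and the isomorphism is additive there. A secondary technical point is the orientation/asymmetry of the transport distance, and ensuring the coset-progression is genuinely proper and not merely $t$-proper for small $t$; if the statement only assumed $t$-properness one would first shrink $P$ slightly at the cost of an $O(d)$ additive loss in entropy, which is absorbable, but as stated we are given properness outright so this does not arise.
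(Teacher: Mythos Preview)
Your approach is essentially the paper's: embed $H+P$ into $G' = H \times \Z/2N_1\Z \times \cdots \times \Z/2N_d\Z$, apply Theorem~\ref{uni-thm} there, and pull the transport back using that the shift variable necessarily lies in $B-B$ and hence sees no wraparound. Two small simplifications relative to your write-up: first, the paper avoids your separate bound on $\trans(p_{U'}, p_{\iota(U)})$ by simply applying Theorem~\ref{uni-thm} a \emph{second} time to $\tilde p_U$ (which also satisfies $\Ent(\tilde p_U) \geq \log|G'| - O(d)$) and then invoking the triangle inequality; second, your worry about asymmetry is unfounded, since $\trans$ is a genuine metric (if $X+Z \equiv Y$ then on the same coupling $Y + (-Z) \equiv X$). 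For the pullback, note that the paper works with the honest group homomorphism $\phi: H \times \Z^d \to G$ rather than a Freiman isomorphism: one lifts $Z \in B-B \subset G'$ uniquely to $H \times \Z^d$ (no wraparound) and then pushes forward by $\phi$, which only requires $1$-properness (injectivity of $\phi$ on $B$) and not the $2$-properness a Freiman-isomorphism argument would demand.
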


\begin{proof}  We can view $H+P$ as the homomorphic image of $B := H \times [0,N_1) \times \ldots \times [0,N_d)$ for some integers $N_1,\ldots,N_d \geq 1$.  Let $\tilde p \in \Pr_c(B)$ be the pullback of $p$ to $B$, and similarly define $\tilde p_U$.  We can then embed $B$ in the finite group $G := H \times \Z/(2N_1\Z) \times \ldots \times \Z/(2N_d\Z)$.  Observe that
$$ \Ent(\tilde p), \Ent(\tilde p_U) \geq \log |G| - \log K - O(d)$$
and so by Theorem \ref{uni-thm}, 
$$ \trans(\tilde p, p_G), \trans(\tilde p_U, p_G) \ll \log K + d$$
where $p_G$ is the uniform distribution on $G$.  Thus by the triangle inequality
$$ \trans(\tilde p, \tilde p_U) \ll \log K + d.$$
Observe that as $\tilde p, \tilde p_U$ both range in $B$, the shifts needed to transport $\tilde p$ to $\tilde p_U$ range in $B-B$ and so do not encounter the ``wraparound'' effects of the cyclic groups $\Z/(2N_1\Z), \ldots, \Z/(2N_d\Z)$.  Thus we can push this transport bound back to $H+P$ and establish \eqref{transpu} as desired.
\end{proof}

\section{The inverse entropy theorem}\label{iest-sec}

We now prove Theorem \ref{iest}.  We begin with the easy claim (i).  If $X$ is the uniform distribution on a coset of a finite group then $X+X$ is uniformly distributed on another coset of this group, and so $\doubling[X]=1$ as claimed.  Now suppose that $\doubling[X]=1$, thus $\Ent(X_1+X_2)=\Ent(X)$, where $X_1,X_2$ are independent copies of $X$.  Inspecting the proof of Lemma \ref{triv} we conclude that $\Ent( X_1+X_2 ) = \Ent(X_1+X_2|X_2)$, which by the discussion after \eqref{ento} implies that $X_1+X_2$ and $X_2$ are independent, or equivalently that the distribution of $(X_1+X_2|X_2=x)$ is independent of $x \in \range(X)$.  This implies that the probability distribution of $X$ is invariant under translations in $\range(X)-\range(X)$, which quickly implies that $\range(X)-\range(X)$ is a finite subgroup of $G$, and that $X$ is uniformly distributed on a coset of this subgroup, as desired.

Now we prove the more difficult claims (ii), (iii).  We begin with a special case of (iii), in which $Y$ is already uniform.

\begin{proposition}\label{sumtop}  Let $X$ be a $G$-random variable, and let $H+P$ be a coset progression of rank $d$.  Let $U$ be the uniform distribution on $H+P$, and suppose that $\dist_R(X,U) \leq \log K$.  Then $\trans(X,U) \ll_{K,d} 1$.
\end{proposition}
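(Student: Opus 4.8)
The plan is to reduce Proposition~\ref{sumtop} to the uniformisation result Corollary~\ref{coset-prog}, by first showing that $X$ must already be concentrated (up to bounded transport cost) on a translate of a coset progression of comparable size. Write $\dist_R(X,U) = \Ent(X'-U') - \frac12\Ent(X') - \frac12 \Ent(U') \leq \log K$, where $X',U'$ are independent copies. Since $\Ent(U') = \log|H+P|$, the hypothesis controls $\Ent(X'-U')$ from above in terms of $\Ent(X')$ and $\log|H+P|$. First I would use \eqref{ent-lower} (applied to the independent sum $X' + (-U')$) to get $\Ent(X') \leq \Ent(X'-U') \leq \frac12\Ent(X') + \frac12\log|H+P| + \log K$, hence $\Ent(X') \leq \log|H+P| + 2\log K$; symmetrically $\log|H+P| \leq \Ent(X') + 2\log K$, so $\Ent(X)$ and $\log|H+P|$ agree up to $O(\log K)$.

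Next I would exploit that $X' - U'$ has entropy close to $\log|H+P|$ while ranging in $\range(X) + (H+P) - (H+P) \subset \range(X) + (H+P)$ shifted appropriately; more usefully, condition on $U'$: since $U'$ is uniform on $H+P$ and independent of $X'$, we have $\Ent(X'-U' \mid U') = \Ent(X')$, and the small doubling-type hypothesis forces the distribution of $X' - U'$ to not spread out much beyond that of $X'$ translated. The cleanest route is to apply the inverse machinery already built: the Ruzsa distance bound gives $\doubling[X] \leq K^{O(1)}$ via \eqref{rrt} and \eqref{dubdub} (comparing $\dist_R(X,-X)$ to $\dist_R(X,U)$ and $\dist_R(U,-U)$, the latter being $O(d)$ since $U$ is uniform on a coset progression), so in particular $X$ itself has bounded doubling. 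But rather than invoke the full Theorem~\ref{iest}(ii) (which would be circular), I would argue directly: using the sum-difference inequality \eqref{condit} and the Ruzsa triangle inequality \eqref{ruzsa-triangle} from Theorem~\ref{ese}, one shows $\Ent(X + U) = \Ent(X) + O(\log K) + O(d)$ as well, i.e. adding an independent uniform-on-$H+P$ copy barely increases entropy.

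The key step is then a \emph{covering} argument: because $\Ent(X+U) \leq \log|H+P| + O(\log K + d)$ and $X+U$ contains, for each fixed value $x$ of $X$, a uniform copy of $x + H + P$, a pigeonhole/Plünnecke-type estimate on the fibers shows that $\range(X)$ (after restricting to a high-probability subset, costing $O(\log K)$ in transport via Lemma~\ref{trans-split}) is covered by $O_{K,d}(1)$ translates of $H+P$; equivalently, $X$ is, up to $O_{K,d}(1)$ transport cost, supported on a single coset progression $H + P'$ with $P' \supseteq P$ of rank $d$ and size $O_{K,d}(1)\cdot|H+P|$, on which its entropy is within $O_{K,d}(1)$ of the log-cardinality. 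At that point Corollary~\ref{coset-prog} applies to give $\trans(X, U_{H+P'}) \ll_{K,d} 1$, and since $U_{H+P'}$ and $U = U_{H+P}$ differ by transport cost $\ll_{K,d} 1$ (one is a restriction-up-to-density of the other, handled again by Corollary~\ref{coset-prog} run on $H+P'$), the triangle inequality for the transport metric closes the argument.

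The main obstacle I anticipate is the covering step: extracting from the single inequality $\Ent(X+U) \approx \log|H+P|$ the conclusion that $\range(X)$ lies in boundedly many translates of $H+P$. In the combinatorial world this is exactly the Ruzsa covering lemma ($|X + U| \leq K|U|$ implies $X \subseteq X - U + U$ is covered by $\leq K$ translates of $U-U$), and the entropy analogue will require care: one must pass to conditional entropies, use that $X + U$ being "small" means most fibers of the map $(x,u)\mapsto x+u$ over $x+H+P$ are nearly full, and then run a greedy translate-selection argument phrased in terms of conditionally independent trials. Controlling the accumulation of constants through this greedy step (each selected translate costs a factor in $K$ and $2^d$) is what makes the final bounds triple-exponential, but for the qualitative statement $\trans(X,U) \ll_{K,d} 1$ only finiteness of the number of covering translates is needed.
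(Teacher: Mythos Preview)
Your overall plan---reduce to Corollary~\ref{coset-prog} by first showing that $X$ is, up to bounded transport cost, supported on a single coset progression of size $\asymp_{K,d}|H+P|$---matches the paper's, and your preliminary bounds $\Ent(X)=\log|H+P|+O_{K}(1)$ and $\Ent(X+U)=\log|H+P|+O_{K,d}(1)$ are correct. (The paper gets the second directly from the Ruzsa hypothesis applied to $X$ and $-U$, which is a translate of $U$, without the detour through $\doubling[X]$ or \eqref{condit}.)

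Where you diverge from the paper is the covering step, and here there is a real gap. You aim for a cardinality statement---after a cheap restriction, $\range(X)$ lies in $O_{K,d}(1)$ translates of $H+P$---via a greedy selection ``phrased in terms of conditionally independent trials.'' But entropy bounds do not yield cardinality bounds on the number of occupied translates: a quotient variable $Y$ with $\Ent(Y)=O(1)$ may take infinitely many values (e.g.\ $p_Y(i)\asymp 2^{-i}$), and the transport cost of collapsing its tail onto finitely many atoms is not controlled by $\Ent(Y)$ alone. Your greedy translate-selection has no clean entropy incarnation, and conditionally independent trials play no role here.

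The paper sidesteps this entirely. It fixes a maximal packing $S\subset G$ with $\{s+H+P\}_{s\in S}$ disjoint, refines the resulting cover to a partition $G=\bigsqcup_{s}A_s$ with $s+(H+P)\subset A_s\subset s+(H+P)-(H+P)$, and defines a single quotient variable $Y$ by $Y:=s$ when $X+U\in A_s$. The core of the proof is then showing $\Ent(Y)=O_{K,d}(1)$: from the entropy bound on $X+U$ one extracts $\sum_s c_s\log(1/c_{s'(s)})=O(1)$, where $c_s=\P(X+U\in A_s)$ and $s'(s)$ is a nearby packing point carrying comparable mass, and then bootstraps this to $\sum_s c_s\log(1/c_s)=O(1)$ by a case split on the ratio $c_{s'(s)}/c_s$, using that $s\mapsto s'(s)$ has $O_d(1)$-bounded fibres. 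Once $\Ent(Y)=O(1)$ is in hand, one has $\trans(X,X-Y)\leq\Ent(Y)$, and $X-Y$ lands \emph{deterministically} in the fixed coset progression $(H+P)-2(H+P)$ with entropy $\geq\log|H+P|-O(1)$, so Corollary~\ref{coset-prog} finishes. No restriction, no greedy iteration, no counting of translates: one low-entropy shift replaces your entire covering machinery.
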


\begin{proof}  By translating $H+P$ if necessary we may assume $0 \in H+P$.  We allow all implied constants to depend on $K,d$.  The basic idea here is to treat $H+P$ as an approximate group, and somehow pass to a ``quotient'' of $G$ by $H+P$.  The reader is encouraged to consider the special case $P=\{0\}$, in which this quotienting idea can be made precise.

Let $S$ be a maximal subset of $G$ with the property that the translates $s+H+P$ of $H+P$ for $s \in S$ are all disjoint.  (For $G$ infinite, the existence of such an $S$ is guaranteed by Zorn's lemma.)  
Clearly the translates $s+(H+P)-(H+P)$ cover $G$.  From this, the disjointness of the $s+H+P$, and the greedy algorithm, we can thus partition $G$ into sets $A_s$ for $s \in S$, where
$$ s+(H+P) \subset A_s \subset s+(H+P)-(H+P).$$
One should view the partition $A_s$ as a crude approximation of the (non-existent) quotient of $G$ by $H+P$.

Take $U$, $X$ to be independent.  From \eqref{ruzsa-def} and the hypothesis $\dist_R(X,U) \leq \log K$ (and the fact that $-U$ is equivalent to a translate of $U$) one has
$$ \Ent(X+U) \leq \frac{1}{2} \Ent(X) + \frac{1}{2} \Ent(U) + O(1).$$
Of course, $\Ent(U) = \log |H+P|$.  Applying \eqref{ent-lower}, we conclude that
$$ \Ent(X) = \log |H+P| + O(1)$$
and
$$ \Ent( X+U ) = \log |H+P| + O(1)$$
and thus
$$ \sum_{s \in S} \sum_{x \in A_s} p_{X+U}(x) \log \frac{1}{p_{X+U}(x)} = \log |H+P| + O(1).$$
On the other hand, if $s \in S$ and $x \in A_s$, one clearly has
\begin{align*}
 p_{X+U}(x) &\leq \frac{1}{|H+P|} \P( X \in s + (H+P) - 2(H+P) ) \\
 &\leq \frac{1}{|H+P|} \P( X+U \in s + 2(H+P) - 2(H+P) )
\end{align*}
and thus
$$ \log \frac{1}{p_{X+U}(x)} \geq \log |H+P| + \log \frac{1}{\P( X+U \in s + 2(H+P) - 2(H+P)  )}.$$
Since $\sum_{s \in S} \sum_{x \in A_x} p_{X+U}(x) = 1$, we conclude that
$$ \sum_{s \in S} \sum_{x \in A_s} p_{X+U}(x) \log \frac{1}{\P( X+U \in s + 2(H+P) - 2(H+P)  )} \leq O(1)$$
or equivalently
$$ \sum_{s \in S} c_s \log \frac{1}{\P( X+U \in s + 2(H + P) - 2(H+P) )} \leq O(1)$$
where $c_s := \P(X+U \in A_s)$.
Observe that $s+2(H+P)-2(H+P)$ can be covered by at most $O(1)$ sets $A_{s'}$, where $s' \in s + 3(H+P) - 3(H+P)$.  (Indeed, all such $A_{s'}$ are disjoint, have cardinality comparable to $|H+P|$, and are contained in $s+4(H+P)-4(H+P)$ which has cardinality $O(|H+P|)$.) Thus, by the pigeonhole principle, for every $s \in S$ there exists $s'(s) \in S \cap (s + 3(H+P) - 3(H+P))$ such that
$$ \P( X+U \in s + 2(H + P) - 2(H+P) ) \ll c_{s'(s)}$$
and thus
\begin{equation}\label{soo}
 \sum_{s \in S} c_s \log \frac{1}{c_{s'(s)}} \leq O(1).
\end{equation}
Let $Y$ be the random variable $Y := s$, where $s$ is the unique $s \in S$ such that $X+U \in A_s$.  Then $X-Y$ takes values in $(H+P)-2(H+P)$.  We now claim that 
\begin{equation}\label{entyo}
\Ent(Y) \leq O(1), 
\end{equation}
or in other words that
\begin{equation}\label{soo2}
\sum_{s \in S} F(c_s) \leq O(1).
\end{equation}
This is almost \eqref{soo}, but we have to replace $s'(s)$ by $s$.  To do this, we let $C > e$ be a large quantity to be chosen later, and split the sum in \eqref{soo2} into three terms: one where $c_{s'(s)} \geq 1/e$, one where $1/e > c_{s'(s)} \geq C c_s$ and one where $c_{s'(s)} \leq C c_s$.

In the first case, observe that there are only $O(1)$ possible values of $s'(s)$; each one of these is associated to $O(1)$ possible values of $s$ (since $s \in s'(s) + 3(H+P)-3(H+P)$ and the $s+H+P$ are disjoint), so the net contribution to \eqref{soo2} here is $O(1)$.

In the second case, we observe from \eqref{fax} that
$$ F(c_s) \leq 2 F(1/C) F(C c_s) \leq 2 F(1/C) F(C c_{s'(s)}),$$ 
so the contribution of this term to \eqref{soo2} is at most
$$ 2 F(1/C) \sum_{s \in S} F(c_{s'(s)}).$$
But each $s'$ can arise from at most $O(1)$ choices of $S$, so we can bound this contribution by at most
$$ \frac{1}{2} \sum_{s' \in S} F(c_{s'}) = \frac{1}{2} \Ent(Y) $$
if $C = O(1)$ is chosen appropriately.

For the third case, we see that
$$ c_s \log \frac{1}{c_s} \leq c_s \log \frac{1}{c_{s'(s)}} + c_s \log C$$
and so by \eqref{soo2} the net contribution of this case is
$$ \leq O(1) + \log C = O(1).$$
Thus $\Ent(Y) \leq \frac{1}{2} \Ent(Y) + O(1)$, and the claim \eqref{entyo} follows.  In particular
$$ \trans(X, X-Y) \ll 1.$$
But by \eqref{ent-sum} one has
$$ \Ent(X-Y) \geq \Ent(X) - O(1) \geq \log |H+P| - O(1).$$
The random variable $X-Y$ ranges in $(H+P)-2(H+P)$, which is a coset progression of rank $d$ and cardinality $O( |H+P| )$.  Applying Corollary \ref{coset-prog} to $X-Y$, we conclude that
$$ \trans(X-Y,U_{(H+P)-2(H+P)}) \ll 1$$
where $U_{(H+P)-2(H+P)}$ is the uniform distribution on $(H+P)-2(H+P)$.  Direct computation shows that
$$ \trans(U_{(H+P)-2(H+P)}, U) \ll 1$$
and the claim follows from another application of the triangle inequality.
\end{proof}

In view of the above proposition, it suffices to show that

\begin{proposition}\label{main}  If $\doubling[X] \leq K$, then there exists a coset progression $H+P$ of rank $O_K(1)$ such that $\dist_R(X,U) \ll_K 1$, where $U$ is the uniform distribution on $H+P$.
\end{proposition}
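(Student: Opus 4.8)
The plan is to deduce Proposition \ref{main} from the combinatorial Green-Ruzsa Freiman theorem (Theorem \ref{grf}) by ``uniformising away the noise'' in $X$.

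\emph{Reduction to a combinatorial set.} It suffices to produce a finite non-empty set $A \subseteq G$ with small \emph{combinatorial} doubling $\doubling[A] \ll_K 1$ and with $\dist_R(X, U_A) \ll_K 1$, where $U_A$ is the uniform distribution on $A$. Indeed, Theorem \ref{grf} then embeds $A$ in a coset progression $H+P$ of rank $O_K(1)$ with $|H+P| \le \exp(O_K(1))|A|$ (which we may take to be proper, at the cost of $O_K(1)$ in the rank and $\exp(O_K(1))$ in the size); since $U_A$ is a distribution on $H+P$ with $\Ent(U_A) = \log|A| \ge \log|H+P| - O_K(1)$, Corollary \ref{coset-prog} gives $\trans(U_A, U) \ll_K 1$ and hence $\dist_R(U_A, U) \ll_K 1$ by \eqref{rrt}, with $U := U_{H+P}$; and finally the Ruzsa triangle inequality \eqref{ruzsa-triangle} gives $\dist_R(X, U) \le \dist_R(X, U_A) + \dist_R(U_A, U) \ll_K 1$. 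Note that once $\dist_R(X, U_A) \ll_K 1$ is known, $\doubling[U_A] \ll_K 1$ is automatic (via \eqref{ruzsa-triangle} and \eqref{dubdub}, using $\dist_R(X, -X) = \log\doubling[X] \le \log K$); the real content is the \emph{combinatorial} doubling bound.

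\emph{Stage 1: stripping the low-probability tails.} First one replaces $X$ by a conditioning $X'$ of $X$ which is essentially uniform on its range, i.e.\ $|\range(X')| \le \exp(\Ent(X') + O_K(1))$, while keeping $\dist_R(X, X') \ll_K 1$ (and hence $\doubling[X'] \ll_K 1$). This is done by dyadically decomposing $\range(X)$ according to the size of $p_X(x)$: the small-doubling hypothesis, fed through \eqref{jest} and the submodularity inequality, prevents $\log\tfrac{1}{p_X(X)}$ from being spread over more than $O_K(1)$ dyadic scales about its mean $\Ent(X)$, so a bounded window of scales near the top ($p_X(x) \asymp_K e^{-\Ent(X)}$) carries a $\gg_K 1$ fraction of the mass; conditioning $X$ to that window gives $X'$. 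Passing from $X'$ to $U_{A_0}$ with $A_0 := \range(X')$ costs a further $O_K(1)$ in $\dist_R$, so we are reduced to the case $X = U_{A_0}$ with $|A_0| \asymp_K e^{\Ent(X)}$ and $\doubling[U_{A_0}] \ll_K 1$ --- although $A_0$ may still have large \emph{combinatorial} doubling.

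\emph{Stage 2: extracting combinatorial structure, and the main obstacle.} From $\doubling[U_{A_0}] \le K'$, i.e.\ $\Ent(U_{A_0}+U_{A_0}) \le \log|A_0| + \log K'$, and the inequality (R\'enyi-$2$ entropy) $\le$ (Shannon entropy), one gets that $A_0$ has additive energy $\ge |A_0|^3/K'$; the combinatorial Balog-Szemer\'edi-Gowers lemma (Lemma \ref{bsg}) together with Theorem \ref{grf} then yields a coset progression $Q$ of rank $O_K(1)$, with $|Q| \asymp_K |A_0|$, containing a $\gg_K 1$ fraction of $A_0$. One would like to take $A := A_0 \cap Q$; this has $\doubling[A] \ll_K 1$ (it is dense in $Q$), and the only remaining point is to show that $A_0 \setminus Q$ is \emph{small}, in fact of density $O_K(1/\Ent(X))$, so that (conditioning on whether $(U_{A_0})_1 \in A$) $\dist_R(U_{A_0}, U_A) \ll_K 1 + \tfrac{|A_0 \setminus A|}{|A_0|}\log|A_0| \ll_K 1$. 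This last step is the crux of the whole argument: the bare output of Lemma \ref{bsg} only controls a constant fraction of $A_0$, so one must use the entropy hypothesis more sharply --- writing it as $\E_y[\log r_{A_0}(y)] \ge \log|A_0| - \log K'$, where $y = a+b$ for $(a,b)$ uniform in $A_0^2$ and $r_{A_0}(y)$ is the number of representations, one observes that any element of $A_0$ lying outside $Q$ (and not absorbable into $Q$, which we may assume by maximality of $Q$) contributes $\gg \log|A_0| \asymp \Ent(X)$ to this expectation through its cross-sums $a + A_0$, forcing the density of $A_0 \setminus Q$ down to $O_K(1/\Ent(X))$ and completing the construction of $A$.
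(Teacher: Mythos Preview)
Your overall architecture---find a level set $A$ on which $p_X$ is roughly constant, show $A$ has large additive energy, apply Balog--Szemer\'edi--Gowers and then Green--Ruzsa to get a coset progression $Q$---is the same as the paper's. The divergence, and the gap, is in how you close the argument.

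\medskip

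\textbf{The gap.} Your ``crux'' step asserts that $|A_0\setminus Q|/|A_0|\ll_K 1/\log|A_0|$, arguing that each $a\in A_0\setminus Q$ ``contributes $\gg\log|A_0|$'' to the deficit in $\E_{(a,b)}[\log r_{A_0}(a+b)]\ge\log|A_0|-\log K'$. This is not justified. Take $A_0=B_1\cup B_2$, two arithmetic progressions of length $N/2$ with a generic offset; then $\doubling[U_{A_0}]=O(1)$, and a single application of BSG+Freiman may well return $Q\supseteq B_1$ only. For $a\in B_2$ one has $\E_{b}[\log r_{A_0}(a+b)]\approx\log(N/2)$, so $a$ contributes essentially \emph{zero} deficit, not $\gg\log|A_0|$. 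You try to repair this with ``maximality of $Q$'', but that notion is never defined: maximal among what class, and why does the BSG+Freiman output admit such a maximal choice of rank $O_K(1)$ that absorbs all structured pieces? Making this precise would amount to an iterative covering argument at least as delicate as the proposition itself, and the sketch you give (``not absorbable into $Q$'') does not supply one. Even if the density claim happens to be true, your argument does not prove it.

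\medskip

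\textbf{What the paper does instead.} The paper never attempts to show that $X$ (or $A_0$) is mostly concentrated in $H+P$. After obtaining $H+P$ with $\P(X\in H+P)\gg_K 1$ and $|H+P|\asymp_K e^{\Ent(X)}$, it takes independent copies $X_1,X_2$, sets $X'_2:=(X_2\mid X_2\in H+P)$, and uses the \emph{lower} bound $\Ent(X_1+X_2\mid X_2\notin H+P)\ge\Ent(X_1)$ from Lemma~\ref{triv} in the conditioning
\[
\Ent(X_1+X_2)\ \ge\ \P(X_2\in H+P)\,\Ent(X_1+X'_2)\ +\ \P(X_2\notin H+P)\,\Ent(X_1).
\]
Rearranging with $\Ent(X_1+X_2)\le\Ent(X_1)+\log K$ and $\P(X_2\in H+P)\gg_K 1$ gives $\Ent(X_1+X'_2)\le\Ent(X_1)+O_K(1)$, i.e.\ $\dist_R(X,-X'_2)=O_K(1)$. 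Since $X'_2$ lives in $H+P$ with $\Ent(X'_2)=\log|H+P|+O_K(1)$, one gets $\dist_R(X'_2,U)=O_K(1)$ trivially (as $X'_2-U$ ranges in $(H+P)-(H+P)$, of size $\ll_K|H+P|$), and the triangle inequality finishes. The point is that by using a lower bound on the ``bad'' conditional entropy rather than an upper bound, one needs only $\P(X_2\in H+P)\gg_K 1$---no quantitative smallness of the complement is required. This trick would also rescue your argument at Stage~2 (and incidentally makes the detour through $U_{A_0}$ in Stage~1 unnecessary: one can work with $X$ throughout).
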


Indeed, (ii) follows immediately from Proposition \ref{main} and Proposition \ref{sumtop}, while if we are in the situation of (iii), then from Theorem \ref{ese} one has 
\begin{align*}
\dist_R(X,-X) &\leq \dist_R(X,Y) + \dist_R(X,-Y) \\
&\leq 4 \dist_R(X,Y) \\
&\leq 4 \log K
\end{align*}
and thus from \eqref{dubdub} we have $\doubling[X] \leq K^4$.  By Proposition \ref{main}, one can then find a uniform distribution $U$ on a coset progresion $H+P$ on rank $O_K(1)$ such that $\dist_R(X,U) \ll_K 1$, hence $\trans(X,U) \ll_K 1$ by Proposition \ref{sumtop}; meanwhile, from the Ruzsa triangle inequality \eqref{ruzsa-triangle} one has $\dist_R(Y,U) \ll_K 1$ and so $\trans(Y,U) \ll_K 1$, and so by the triangle inequality one has $\trans(X,Y) \ll_K 1$ as claimed.

It remains to establish Proposition \ref{main}.  We begin with an approximate formula for $\Ent(X+Y)-\Ent(X)$ when $X, Y$ are independent.

\begin{lemma}[Sumset entropy increase formula]\label{xysim} Let $X,Y$ be independent.  Then
$$ \sum_{y \in \range(Y)} p_Y(y) \sum_{z \in \range(X+Y)} p_{X+y}(z) \log_+ \frac{p_{X+y}(z)}{p_{X+Y}(z)}  = \Ent(X+Y) - \Ent(X) + O(1),$$
where $\log_+ x := \max(\log x, 0)$.
\end{lemma}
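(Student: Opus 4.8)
The plan is to recognise that, with the truncation $\log_+$ replaced by the plain logarithm, the left-hand side is simply $\Ent(X+Y) - \Ent(X+Y\mid Y)$ written out, and that the truncation perturbs this by a quantity lying in $[0,1]$. First I would do the bookkeeping. Since $X,Y$ are independent, conditioning on $Y=y$ leaves $X+Y$ distributed as the translate $X+y$, so by translation invariance of entropy $\Ent(X+Y\mid Y=y) = \Ent(X)$ and hence $\Ent(X+Y\mid Y) = \Ent(X)$. Using the standard identity $\Ent(A) - \Ent(A\mid B) = \sum_b p_B(b) \sum_a p_{A\mid B=b}(a)\log\frac{p_{A\mid B=b}(a)}{p_A(a)}$ (valid for any jointly distributed discrete $A,B$; see Appendix \ref{shannon-sec}) with $A = X+Y$, $B=Y$, and noting that $p_{X+Y\mid Y=y} = p_{X+y}$ together with $p_{X+Y}(z) = \sum_{y} p_Y(y) p_{X+y}(z)$, one obtains
$$ \Ent(X+Y) - \Ent(X) \;=\; \sum_{y\in\range(Y)} p_Y(y) \sum_{z\in\range(X+Y)} p_{X+y}(z)\log\frac{p_{X+y}(z)}{p_{X+Y}(z)}, $$
where the inner sum may be extended to all of $\range(X+Y)$ because $\range(X+y)\subseteq\range(X+Y)$ and the extra terms carry the factor $p_{X+y}(z)=0$. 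This is exactly the asserted expression, but with $\log$ in place of $\log_+$.

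It then remains to bound the discrepancy
$$ \Delta \;:=\; \sum_{y} p_Y(y) \sum_{z} p_{X+y}(z)\left( \log_+\frac{p_{X+y}(z)}{p_{X+Y}(z)} - \log\frac{p_{X+y}(z)}{p_{X+Y}(z)} \right). $$
Since $\log_+ t - \log t = \max(-\log t,0) \ge 0$ we have $\Delta \ge 0$, and the $(y,z)$-summand vanishes unless $0 < p_{X+y}(z) < p_{X+Y}(z)$, in which case it equals $p_{X+y}(z)\log\frac{p_{X+Y}(z)}{p_{X+y}(z)}$. The elementary inequality $\log u \le u - 1$ gives $a\log(b/a) \le b - a$ for all $a,b>0$; applying it with $a = p_{X+y}(z)$, $b = p_{X+Y}(z)$ bounds each such summand by $p_{X+Y}(z) - p_{X+y}(z) \le p_{X+Y}(z)$. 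Summing over $z$ and then over $y$ yields $\Delta \le \sum_y p_Y(y)\sum_z p_{X+Y}(z) = 1$. Hence the left-hand side of the lemma equals $\Ent(X+Y) - \Ent(X) + \Delta$ with $\Delta \in [0,1]$, which proves the formula with an explicit $O(1)$ error.

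There is no genuine obstacle here: the statement is a soft corollary of the chain rule for entropy together with the bound $\log u \le u - 1$. The only points needing a little care are the reduction of the $\log$-version to $\Ent(X+Y) - \Ent(X)$ (which uses independence and translation invariance, and the handling of the $p_{X+y}(z)=0$ terms under the convention $F(0)=0$) and the remark that, because $\log_+$ differs from $\log$ only on the region where $p_{X+y}(z) < p_{X+Y}(z)$, the truncation is precisely what renders the quantity manifestly nonnegative while costing at most $1$.
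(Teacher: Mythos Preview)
Your argument is correct. The identification of the un-truncated sum with the mutual information $\Ent(X+Y)-\Ent(X+Y\mid Y)=\Ent(X+Y)-\Ent(X)$ is valid (this identity is not stated verbatim in Appendix~\ref{shannon-sec}, but follows in one line from the definitions there), and the bound $0\le\Delta\le 1$ via $\log u\le u-1$ is clean and sharp.

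The paper organises the computation differently. It writes
\[
\Ent(X+Y)-\Ent(X)=\sum_y p_Y(y)\sum_z\Big(F(p_{X+Y}(z))+F'(p_{X+Y}(z))(p_{X+y}(z)-p_{X+Y}(z))-F(p_{X+y}(z))\Big),
\]
i.e.\ as an average of Bregman divergences of $F$, and then invokes the appendix estimate \eqref{sub-bound} to identify each summand with $p_{X+y}(z)\log_+\!\frac{p_{X+y}(z)}{p_{X+Y}(z)}+O(p_{X+y}(z))+O(p_{X+Y}(z))$, after which the $O(\cdot)$ terms sum to $O(1)$. Your route bypasses \eqref{sub-bound} entirely: you first get the exact formula with $\log$ by recognising it as a conditional KL divergence, and then control the passage to $\log_+$ by a direct application of $\log u\le u-1$. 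The two arguments are equivalent in spirit (indeed \eqref{sub-bound} is itself proved from $\frac{x}{y}-1-\log\frac{x}{y}\ge 0$), but your version is more self-contained and yields the explicit constant $\Delta\in[0,1]$ rather than an unspecified $O(1)$.
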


\begin{proof}  To simplify the summation notation, it will be understood throughout that $y \in \range(Y)$ and $z \in \range(X+Y)$. We have
\begin{align*}
\Ent(X+Y) - \Ent(X) &= \sum_{y} p_Y(y) ( \Ent(X+Y) - \Ent(X+y) ) \\
&= \sum_y p_Y(y) \sum_{z} (F(p_{X+Y}(z)) - F(p_{X+y}(z))).
\end{align*}
Since
$$ \sum_{y} p_Y(y) ( p_{X+Y}(z) - p_{X+y}(z) ) = 0$$
for all $z$, we thus have
$$ \Ent(X+Y) - \Ent(X) = \sum_{y} p_Y(y) \sum_z F(p_{X+Y}(z)) + F'(p_{X+Y}(z)) ( p_{X+y}(z) - p_{X+Y}(z) ) - F(p_{X+y}(z)).$$
From \eqref{sub-bound}, the summand is equal to
$$ p_{X+y}(z) \log_+ \frac{p_{X+y}(z)}{p_{X+Y}(z)} + O( p_{X+y}(z) ) + O( p_{X+Y}(z) ).$$
Since
$$ \sum_y p_Y(y) \sum_z p_{X+y}(z) = \sum_y p_Y(y) \sum_z p_{X+Y}(z) = 1,$$
the desired claim follows.
\end{proof}

This leads us to our first structural result on random variables of bounded doubling, namely that they are approximately uniformly distributed in a set that captures the bulk of the entropy.

\begin{proposition}[$X$ is approximately uniformly distributed]\label{xa}  If $\doubling[X] \leq K$, then there exists a non-empty set $A$ of cardinality
\begin{equation}\label{acard}
|A| \asymp_K \exp( \Ent(X) )
\end{equation}
such that
\begin{equation}\label{aprob}
p_X(x) \asymp_K \exp( - \Ent(X) )
\end{equation}
for all $x \in A$. 
\end{proposition}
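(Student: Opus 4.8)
The plan is to extract the set $A$ from the measure-concentration properties of $X$ that are forced by $\sigma[X]\le K$. The starting point is Lemma~\ref{jensen} (the precise form of \eqref{jens}), which controls how far $p_X$ can be from uniform on its own range once $\Ent(X)$ is almost $\log|\range(X)|$; but we do not know a priori that $\Ent(X)$ is close to $\log|\range(X)|$, so instead I would work directly with the ``entropy defect'' of $X$ relative to the natural scale $e^{-\Ent(X)}$. Concretely, write $h := \Ent(X)$ and consider the set $A := \{x : c_1 e^{-h} \le p_X(x) \le c_2 e^{-h}\}$ for constants $c_1,c_2$ depending on $K$ to be chosen; \eqref{aprob} then holds by construction, and the whole content is \eqref{acard}: that $A$ carries most of the mass and has the right cardinality.

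The key input that makes this work is Lemma~\ref{xa}'s predecessor, Lemma~\ref{xysim}: applying the Sumset entropy increase formula with $Y$ an independent copy of $X$, the left-hand side is $\Ent(X+Y)-\Ent(X)+O(1) = \log\sigma[X] + O(1) = O_K(1)$. Thus
$$ \sum_{y} p_X(y) \sum_{z} p_{X+y}(z) \log_+ \frac{p_{X+y}(z)}{p_{X+Y}(z)} \ll_K 1. $$
The summand on the left measures how much the fibre distribution $p_{X+y}$ (i.e. the translate $p_X(\cdot - y)$) is ``spikier'' than the averaged distribution $p_{X+Y}$. The smallness of this quantity says that for a typical $y$, the translated copy of $p_X$ is, in a relative-entropy sense, not much more concentrated than $p_{X+Y}$. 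Since $X+Y$ has entropy $h+O_K(1)$ and ranges in a set of size at most $|\range(X)|^2$, one can combine this with \eqref{ent-lower} and the trivial bound $\Ent(X+Y)\ge h$ to pin down that $p_X$ itself cannot have a significant amount of mass sitting at densities much larger than $e^{-h}$ (the high-density part would survive translation and force $\log_+$ to be large on a set of positive $y$-mass) nor much smaller than $e^{-h}$ (that part contributes little mass but, together with $\Ent(X)=h$, would force the remaining mass to be too concentrated, again detected by the formula applied the other way, or more simply by Jensen / \eqref{jens} on the high-density part). Carrying out this two-sided pigeonholing on the dyadic level sets $\{x: 2^{j} e^{-h} \le p_X(x) < 2^{j+1} e^{-h}\}$ and summing the geometric series yields $\P(X\in A)\ge 1 - \eps_K$ for a suitable choice of $c_1,c_2$, and then $|A| e^{-h} \asymp \P(X\in A) \asymp_K 1$ together with $|A| c_1 e^{-h}\le 1$ gives \eqref{acard}.

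The main obstacle I anticipate is the \emph{lower} tail: ruling out that a constant fraction of the entropy $h$ is accumulated on a set of $x$'s with $p_X(x)\ll_K e^{-h}$ (many points, each of tiny probability). Such a ``spread-out tail'' is invisible to the $\log_+$ in Lemma~\ref{xysim} because there $p_{X+y}(z)/p_{X+Y}(z)$ is small, so the formula alone does not obviously forbid it. The fix is to run the argument in the reverse direction as well — or, more cleanly, to note that if the top part $A_{\mathrm{top}}$ of the distribution (densities $\gg_K e^{-h}$) has been shown to carry all but $\eps_K$ of the mass by the upper-tail argument, then $A_{\mathrm{top}}$ has cardinality $\ll_K e^{h}$, hence entropy $\le h - (1-\eps_K)\log\frac{1}{c_2} + O_K(1)$... one has to be a little careful, but the point is that concentrating mass $1-\eps_K$ on $\ll_K e^h$ points while keeping total entropy exactly $h$ forces those points to individually have density $\gg_K e^{-h}$ too, by the equality case of \eqref{jens} quantified via Lemma~\ref{jensen}. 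This closes the argument without needing a separate reverse application of the sumset formula; the bookkeeping of constants ($c_1, c_2, \eps_K$, and the implied constants in \eqref{acard}, \eqref{aprob}) is the only genuinely fiddly part and is exactly the sort of routine dyadic estimate I would not grind through here.
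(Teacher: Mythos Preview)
Your plan has the right starting point --- applying Lemma~\ref{xysim} with $Y$ an independent copy of $X$ to get
\[
\sum_y p_X(y)\sum_z p_{X+y}(z)\,\log_+\frac{p_{X+y}(z)}{p_{Z}(z)} \ll_K 1
\]
--- but there is a genuine gap in how you extract the scale $e^{-\Ent(X)}$ from this.

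The formula only compares $p_X$ (translated) to $p_Z = p_{X_1+X_2}$; it says nothing about either distribution relative to the absolute scale $e^{-h}$.  You assert that ``the high-density part would survive translation and force $\log_+$ to be large,'' but if $p_X$ has a heavy region then $p_Z$ inherits a correspondingly heavy region (since $p_Z(x+y)\ge p_X(x)p_X(y)$), and the ratio inside $\log_+$ need not be large there.  Knowing only $\Ent(Z)=h+O_K(1)$ gives no pointwise upper bound on $p_Z$, so the step ``densities much larger than $e^{-h}$ carry small mass'' is not justified by the formula alone.  Your lower-tail paragraph then compounds the problem: it \emph{assumes} that the set $A_{\mathrm{top}}=\{p_X\gg_K e^{-h}\}$ already carries mass $1-\eps_K$, which is essentially the conclusion; and the entropy bookkeeping you sketch (``concentrating mass $1-\eps_K$ on $\ll_K e^h$ points while keeping total entropy exactly $h$ forces those points to have density $\gg_K e^{-h}$'') is circular, since points of $A_{\mathrm{top}}$ have density $\gg_K e^{-h}$ by definition, and still gives no control on $\Ent(X\mid X\notin A_{\mathrm{top}})$.

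The paper proceeds differently, and the difference is exactly the missing idea.  From the inequality above one first shows (by symmetrising in $y\leftrightarrow z-y$ and discarding small contributions) that
\[
\sum_z\ \sum_{y:\ p_X(y),\,p_X(z-y)\,\asymp\, p_Z(z)} p_X(y)p_X(z-y) > \tfrac12,
\]
and then \emph{pigeonholes in $z$} to find a single $z_0$ for which the inner sum exceeds $p_Z(z_0)/2$.  This produces a level set $A=\{y:p_X(y)\asymp p_Z(z_0)\}$ with $|A|\asymp 1/p_Z(z_0)$ and $\P(X\in A)\asymp_K 1$ --- but at the unknown scale $p_Z(z_0)$, not $e^{-h}$.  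The identification $\log|A|=h+O_K(1)$ comes from a \emph{second}, independent use of the doubling hypothesis: conditioning $\Ent(X_1+X_2)\le h+\log K$ on the boolean variable $1_{X_1\in A}$ yields $\Ent(X\mid X\in A)\le h+O_K(1)$ and $\Ent(X\mid X\notin A)\le h+O(1/\P(X\notin A))$, and combining these with $\Ent(X\mid X\in A)=\log|A|+O_K(1)$ closes the estimate.  This conditioning step is what replaces your attempted direct tail bounds, and your proposal has no substitute for it.
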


\begin{proof}  We allow implied constants to depend on $K$.
Write $Z := X_1 + X_2$ for the sum of two independent copies of $X$.  From Lemma \ref{xysim} we have
\begin{equation}\label{jo}
\sum_y p_X(y) \sum_{z} p_{X+y}(z) \log_+ \frac{p_{X+y}(z)}{p_Z(z)} \ll 1,
\end{equation}
where it is understood that $y \in \range(X)$ and $z \in \range(Z)$.

Now let $0 < \eps < 0.1$ be a small constant (depending on $K$) to be chosen later.  From \eqref{jo} we have
$$ \sum_y p_X(y) \sum_{z: p_{X+y}(z) \geq e^{1/\eps} p_Z(z)} p_{X+y}(z) /\eps \ll 1$$
and thus
$$ \sum_z [\sum_{y: p_{X+y}(z) \geq e^{1/\eps} p_Z(z)} p_X(y) p_{X+y}(z)] \ll \eps.$$
Swapping $y$ and $z-y$ (using the identity $p_{X+y}(z) = p_X(z-y)$) we also have
$$ \sum_z \sum_{y: p_X(y) \geq e^{1/\eps} p_Z(z)} p_X(y) p_{X+y}(z) \ll \eps.$$
Also, observe that
\begin{align*}
\sum_z \sum_{y: p_{X+y}(z) \leq \eps p_Z(z) } p_X(y) p_{X+y}(z)
&\leq \sum_z \eps p_Z(z) \sum_y p_X(y) \\
&= \eps
\end{align*}
and similarly
$$ \sum_z \sum_{y: p_X(y) \leq \eps p_Z(z) } p_X(y) p_{X+y}(z) \leq \eps.$$
Finally, we have
\begin{equation}\label{sumo}
\sum_z \sum_y p_X(y) p_{X+y}(z) = 1.
\end{equation}
Putting all these estimates together, (with $\eps$ sufficiently small) we conclude that
\begin{equation}\label{sumx}
 \sum_z \sum_{y: p_X(y), p_{X+y}(z) \asymp p_Z(z)} p_X(y) p_{X+y}(z) > 1/2.
 \end{equation}
From \eqref{sumx}, and the pigeonhole principle, there exists an $z_0 \in \range(Z)$ such that
$$ \sum_{y: p_X(y), p_{X+y}(z_0) \asymp p_Z(z_0)} p_X(y) p_{X+y}(z_0) > p_Z(z_0)/2.$$
The left-hand side can be bounded crudely by
$$ \ll | \{ y: p_X(y) \asymp p_Z(z_0) \} | p_Z(z_0)^2.$$
Thus if we let $A$ denote the set
\begin{equation}\label{adef}
 A := \{ y: p_X(y) \asymp p_Z(z_0) \}
\end{equation}
then 
$$|A| \gg 1 / p_Z(z_0).  $$
Since
$$ 1 \geq \sum_{y \in A} p_X(y) \asymp |A| p_Z(z_0)$$
we conclude that
$$
|A| \asymp 1/p_Z(z_0)$$
and hence by \eqref{adef}
\begin{equation}\label{acard1}
p_X(y) \asymp 1/|A|
\end{equation}
for all $y \in A$.  In particular we have
\begin{equation}\label{Aprob}
\P(X \in A) \asymp 1.
\end{equation}
To conclude the lemma, we need to show that
$$ \log |A|  = \Ent(X) + O(1).$$
We may assume by a limiting argument that the events $X \in A$ and $X \not \in A$ have non-zero probability.
Let $X_1, X_2$ be independent copies of $X$, and let $Y$ be the indicator random variable $Y = 1_{X_1 \in A}$.  Then by \eqref{ento}, \eqref{exy} one has
\begin{equation}\label{hxxy}
 \Ent(X_1+X_2) \geq \Ent(X_1+X_2|Y) = \P(X_1 \in A) \Ent(X_1+X_2|X_1 \in A) + \P(X_1 \not \in A) \Ent(X_1+X_2|X_1 \not \in A).
\end{equation}
Now from Lemma \ref{triv} we have
$$ \Ent(X_1+X_2|X_1 \not \in A) \geq \Ent(X_1|X_1 \not \in A) = \Ent(X|X \not \in A)$$
and
$$ \Ent(X_1+X_2|X_1 \in A) \geq \Ent(X_2) = \Ent(X).$$
On the other hand, since $\sigma[X] \leq K$ by hypothesis, $\Ent(X_1+X_2) \leq \Ent(X)+O(1)$.  Putting all these estimates together, we obtain
$$ \Ent(X)+O(1) \geq \P(X \in A) \Ent(X) + \P(X \not \in A) \Ent(X|X \not \in A)$$
and hence
\begin{equation}\label{hexa}
 \Ent(X|X \not \in A) \leq \Ent(X) + O( 1 / \P( X \not \in A) ).
\end{equation}
A similar argument (swapping the roles of $A$ and its complement) give
\begin{equation}\label{hexa-2}
 \Ent(X|X \in A) \leq \Ent(X) + O( 1 / \P(X \in A) ).
\end{equation}
But from \eqref{acard1}, \eqref{Aprob} one has
\begin{equation}\label{hex} \Ent( X | X \in A ) = \log |A| + O(1).
\end{equation}
Combining this with \eqref{hexa-2}, \eqref{Aprob} we obtain the upper bound
$\log|A| \leq \Ent(X) + O(1)$.

Now we establish the lower bound.  From \eqref{hex}, \eqref{hexa-2} one has
$$ \Ent(X|Y) \leq \P(X \in A) \log |A| + \P(X \not \in A) \Ent(X) + O(1).$$
Since $Y$ is boolean, we have $\Ent(Y) \leq \log 2$.  In particular
$$\Ent(X|Y) = \Ent(X,Y) - \Ent(Y) \geq \Ent(X) - \log 2.$$
Combining this with the previous bound and \eqref{Aprob} we see that
$$ \log |A| \geq \Ent(X) - O( \frac{1}{\P(X \in A)} ) \geq \Ent(X) - O(1)$$
as desired.
\end{proof}

Now we show that $A$ has large additive energy.

\begin{proposition}[$A$ has large energy]\label{xa2} If $\doubling[X] \leq K$, and let $A$ be the set in Proposition \ref{xa}.  Then $|\{ a_1,a_2,a_3,a_4 \in A: a_1+a_2=a_3+a_4 \}| \gg_K |A|^3$.
\end{proposition}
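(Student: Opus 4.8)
The plan is to reinterpret the additive energy of $A$ as a collision probability and to estimate it through entropy. Write $E(A):=|\{(a_1,a_2,a_3,a_4)\in A^4: a_1+a_2=a_3+a_4\}|$ for the quantity to be bounded, and let $Y$ be the random variable $X$ conditioned to the event $X\in A$, so that $p_Y$ is supported on $A$. From \eqref{acard} and \eqref{aprob} we have $p_X(x)\asymp_K 1/|A|$ for $x\in A$, hence $\P(X\in A)=\sum_{x\in A}p_X(x)\asymp_K 1$, and therefore (since $\sum_{x\in A}p_Y(x)=1$) $p_Y(x)\asymp_K 1/|A|$ on $A$ and $\Ent(Y)=\log|A|+O_K(1)=\Ent(X)+O_K(1)$. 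Taking $Y_1,Y_2,Y_3,Y_4$ to be independent copies of $Y$, the identity $p_{Y_1+Y_2}(z)=\sum_{a\in A,\ z-a\in A}p_Y(a)p_Y(z-a)\asymp_K r(z)/|A|^2$, where $r(z):=|\{(a,a')\in A^2:a+a'=z\}|$, gives
$$ \P(Y_1+Y_2=Y_3+Y_4)=\sum_z p_{Y_1+Y_2}(z)^2\asymp_K \frac{1}{|A|^4}\sum_z r(z)^2=\frac{E(A)}{|A|^4}. $$
On the other hand Jensen's inequality (convexity of $-\log$) gives $\sum_z q(z)^2\ge\exp(-\Ent(q))$ for any distribution $q$, so $\P(Y_1+Y_2=Y_3+Y_4)\ge\exp(-\Ent(Y_1+Y_2))$. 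Hence it suffices to prove $\Ent(Y_1+Y_2)\le\log|A|+O_K(1)$; since $\Ent(Y)=\log|A|+O_K(1)$, this is just the assertion $\doubling[Y]\ll_K 1$.

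To establish $\doubling[Y]\ll_K 1$, I would first prove the one-sided bound $\Ent(Y+W)\le\Ent(X)+O_K(1)$, where $W$ is an independent copy of $X$. Let $X_1,X_2$ be independent copies of $X$ and condition the sum $X_1+X_2$ on the boolean random variable $1_{X_1\in A}$: given $X_1\in A$ the conditional distribution is that of $Y+W$ with $Y,W$ independent, and given $X_1\notin A$ it is that of $Z+W$ with $Z:=(X\mid X\notin A)$ independent of $W$. Since conditioning cannot increase entropy, and since $\Ent(X_1+X_2)=\Ent(X)+\log\doubling[X]\le\Ent(X)+\log K$,
$$ \Ent(X)+\log K\ \ge\ \P(X\in A)\,\Ent(Y+W)+\P(X\notin A)\,\Ent(Z+W). $$
By \eqref{ent-lower} we have $\Ent(Z+W)\ge\Ent(W)=\Ent(X)$, so after rearranging and using $\P(X\in A)\asymp_K 1$ we obtain $\Ent(Y+W)\le\Ent(X)+O_K(1)$. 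Combined with $\Ent(Y)=\Ent(X)+O_K(1)$ this yields $\dist_R(Y,-W)=\Ent(Y+W)-\tfrac12\Ent(Y)-\tfrac12\Ent(W)\ll_K 1$, and the same computation gives $\dist_R(W,-Y)\ll_K 1$.

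Now I would close the loop with the Ruzsa triangle inequality \eqref{ruzsa-triangle}. Since $\dist_R(-W,W)=\dist_R(W,-W)=\log\doubling[X]\le\log K$ by \eqref{dubdub}, chaining the triangle inequality three times gives
$$ \dist_R(Y,-Y)\ \le\ \dist_R(Y,-W)+\dist_R(-W,W)+\dist_R(W,-Y)\ \ll_K\ 1, $$
so $\doubling[Y]=\exp(\dist_R(Y,-Y))\ll_K 1$ by \eqref{dubdub}. Substituting this into the reduction of the first paragraph finishes the proof. (Equivalently, one can pass from $\Ent(Y+W)\le\Ent(X)+O_K(1)$ to $\Ent(Y_1+Y_2)\le\log|A|+O_K(1)$ by a direct use of the signed form of \eqref{ruzsa-triangle} together with the sum-difference inequality \eqref{condit}.)

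The step I expect to require the most care is precisely the reason one must pass from $X$ to the conditioned variable $Y$ at all. It is tempting to bound $E(A)$ directly by the collision probability $\sum_z p_{X_1+X_2}(z)^2$, but $X$ may carry atoms of mass far exceeding $1/|A|$ sitting outside $A$ — one checks from $\doubling[X]\le K$ only that every atom has mass $O_K(1/\Ent(X))$ — and a single such atom can dominate $\sum_z p_{X_1+X_2}(z)^2$ while contributing nothing to $E(A)$. Moreover, even after passing to $Y$, the naive estimate obtained by conditioning $X_1+X_2$ on the pair $(1_{X_1\in A},1_{X_2\in A})$ only gives $\Ent(Y_1+Y_2)\le c_K\log|A|+O_K(1)$ with $c_K>1$ (one gets $c_K=1$ only when $\P(X\in A)=1$), hence $E(A)\gg_K|A|^{4-c_K}$, which falls short of the sharp exponent $3$; routing the argument through the Ruzsa distance of $Y$ against the full variable $X$ and the triangle inequality is exactly what removes this loss.
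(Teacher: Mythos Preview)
Your argument is correct. Both your proof and the paper's reduce to the sub-goal $\doubling[Y]\ll_K 1$ where $Y=(X\mid X\in A)$, but the routes differ in two places.

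First, to obtain $\doubling[Y]\ll_K 1$: the paper conditions $X_1+X_2$ on the \emph{pair} $(1_{X_1\in A},1_{X_2\in A})$, expands both $\Ent(X_1+X_2)$ and $\Ent(X)$ over the same four cells, and subtracts using the pointwise lower bound $\Ent(X_1+X_2\mid X_1\in A_1,X_2\in A_2)\ge\tfrac12\Ent(X_1\mid A_1)+\tfrac12\Ent(X_2\mid A_2)$ from Lemma~\ref{triv}. This directly yields $\Ent(Y_1+Y_2)\le\Ent(Y)+O_K(1)$ with no detour through the Ruzsa distance. (Incidentally, this means your side remark that the double-indicator conditioning ``only gives $c_K>1$'' is not quite right: done with the subtraction trick, it gives $c_K=1$. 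That comment is not load-bearing for your proof, though.) Your route---condition on one indicator to get $\Ent(Y+X)\le\Ent(X)+O_K(1)$, then chain Ruzsa triangle and \eqref{dubdub}---is equally valid and has the pleasant feature that the one-sided conditioning step is essentially already carried out in the proof of Proposition~\ref{xa}.

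Second, to pass from $\doubling[Y]\ll_K 1$ to large energy: the paper re-runs the derivation of \eqref{sumx} (via Lemma~\ref{xysim}) for $Y$ to produce $\gg_K|A|^2$ pairs $(x,y)\in A^2$ with $p_{Y_1+Y_2}(x+y)\asymp_K 1/|A|$, then reads off the energy from $p_{Y_1+Y_2}(z)\asymp_K r(z)/|A|^2$. Your use of the collision-probability bound $\sum_z q(z)^2\ge\exp(-\Ent(q))$ is a cleaner and more direct way to extract the same conclusion; it replaces an analytic computation with a single application of Jensen.
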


\begin{proof}  Again, we allow implied constants to depend on $K$.
Let $X_1, X_2$ be independent copies of $X$, and let $Y_1, Y_2$ be the indicators $Y_i = 1_{X_i \in A}$.  We have
\begin{align*}
\Ent(X) + O(1) &\geq
\Ent(X_1+X_2) \\
&\geq \Ent(X_1+X_2|Y_1,Y_2) \\
&= \P(X_1 \in A) \P(X_2 \in A) \Ent(X_1+X_2|X_1,X_2 \in A) \\
&\quad + \P(X_1 \in A) \P(X_2 \not \in A) \Ent(X_1+X_2|X_1 \in A; X_2 \not \in A) \\
&\quad + \P(X_1 \not \in A) \P(X_2 \in A) \Ent(X_1+X_2|X_1 \not \in A; X_2 \in A) \\
&\quad + \P(X_1 \not \in A) \P(X_2 \not \in A) \Ent(X_1+X_2|X_1,X_2 \not \in A)
\end{align*}
and
\begin{align*}
\Ent(X) &\leq \frac{1}{2} \Ent(X_1,Y_1) + \frac{1}{2} \Ent(X_2,Y_2) \\
&\leq \frac{1}{2} \Ent(X_1|Y_1) + \frac{1}{2} \Ent(X_2|Y_2) + \log 2 \\
&= \P(X_1 \in A) \P(X_2 \in A) (\frac{1}{2} \Ent(X_1|X_1 \in A) + \frac{1}{2} \Ent(X_2|X_2 \in A)) \\
&\quad + \P(X_1 \in A) \P(X_2 \not \in A) (\frac{1}{2} \Ent(X_1|X_1 \in A) + \frac{1}{2} \Ent(X_2|X_2 \not \in A)) \\
&\quad + \P(X_1 \not \in A) \P(X_2 \in A) (\frac{1}{2} \Ent(X_1|X_1 \not \in A) + \frac{1}{2} \Ent(X_2|X_2 \in A)) \\
&\quad + \P(X_1 \not \in A) \P(X_2 \not \in A) (\frac{1}{2} \Ent(X_1|X_1 \not \in A) + \frac{1}{2} \Ent(X_2|X_2 \not \in A)) \\
&\quad + \log 2.
\end{align*}
Now applying Lemma \ref{triv} we have
$$ \Ent(X_1+X_2| X_1 \in A_1, X_2 \in A_2 ) \geq \frac{1}{2} \Ent( X_1 | X_1 \in A_1 ) + \frac{1}{2} \Ent(X_2 | X_2 \in A_2 )$$
for any sets $A_1,A_2$.  Inserting this into the first estimate and then subtracting from the second, we conclude in particular
that
$$ \P(X_1 \in A) \P(X_2 \in A) (\Ent(X_1+X_2|X_1,X_2 \in A) - \frac{1}{2} \Ent(X_1|X_1 \in A) - \frac{1}{2} \Ent(X_2|X_2 \in A))
\leq O(1)$$
and hence (by \eqref{Aprob})
$$ \Ent(X_1+X_2|X_1,X_2 \in A) \leq \Ent(X|X \in A) + O(1).$$
Let $X'$ be the random variable $X$ conditioned to the event $X \in A$.  Then $X'$ now obeys the hypotheses of Proposition \ref{xa} (with $K$ replaced by a larger but still bounded quantity).  Repeating the derivation of \eqref{sumx}, we conclude
$$
 \sum_z \sum_{y: p_{X'}(y), p_{X'+y}(z) \asymp p_{Z'}(z)} p_{X'}(y) p_{X'+y}(z) \geq 1/2$$
where $Z'$ is the sum of two independent copies of $X'$.  Observe that the summand here vanishes unless $y, z-y \in A$, in which case the summand is $O( 1/|A|^2 )$.  Setting $x := z-y$, we conclude that
\begin{equation}\label{un}
|\{ x, y \in A: p_{Z'}(x+y) \asymp 1/|A| \}| \gg |A|^2.
\end{equation}
Since $p_{Z'}(z) \asymp |\{ a,a' \in A: a+a' = z \}|/|A|^2$, the claim follows.
\end{proof}

From Proposition \ref{xa2} (or \eqref{un}) one can find $E \subset A \times A$ with $|E| \gg_K |A|^2$ such that $|A \stackrel{E}{+} A| \ll_K |A|$. Applying the Balog-Szemer\'edi-Gowers theorem (Lemma \ref{bsg}) we conclude there exists a subset $A'$ of $A$ with $|A'+A'| \asymp_K |A'| \asymp_K |A|$.  Applying Freiman's theorem in an arbitrary additive group (Theorem \ref{grf}) we conclude

\begin{corollary}[Concentration on a coset progression]\label{cos} If $\doubling[X] \leq K$, then there exists a coset progression $H+P$ of rank $O_K(1)$ and cardinality
$$ |H+P| \asymp_K \exp(\Ent(X) )$$
such that
\begin{equation}\label{pax}
 p_X( x ) \asymp_K \exp(-\Ent(X) )
\end{equation}
for $\gg_K |H+P|$ elements $x$ of $H+P$.  \qed
\end{corollary}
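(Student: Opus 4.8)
The plan is to feed the set $A$ furnished by Proposition \ref{xa} into the standard combinatorial sumset machinery. The first step is to upgrade the additive-energy bound of Proposition \ref{xa2} into a genuine small-doubling statement for a large subset of $A$; the second step is to invoke Freiman's theorem (Theorem \ref{grf}) to place that subset inside a coset progression of bounded rank and comparable size; the last step is purely bookkeeping, checking that the density estimate \eqref{aprob} survives on a positive proportion of the progression.

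In more detail, recall from Proposition \ref{xa} that $|A| \asymp_K \exp(\Ent(X))$ and $p_X(x) \asymp_K \exp(-\Ent(X))$ for all $x \in A$, and from Proposition \ref{xa2} (equivalently, from \eqref{un}) that the additive energy $\sum_z r(z)^2$ is $\gg_K |A|^3$, where $r(z) := |\{(a,a') \in A \times A: a+a' = z\}|$. Since $\sum_z r(z) = |A|^2$ and $r(z) \leq |A|$, a routine dyadic pigeonhole argument produces a threshold $\gg_K |A|$ such that the set $E := \{(a,a') \in A \times A: r(a+a') \gg_K |A|\}$ satisfies $|E| \gg_K |A|^2$ and $|A \stackrel{E}{+} A| \ll_K |A|$; this is the $E$ alluded to just above the statement. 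Feeding $E$ into the Balog--Szemer\'edi--Gowers lemma (Lemma \ref{bsg}, with $B = A$) and following up with the usual Ruzsa calculus gives a subset $A' \subseteq A$ with $|A'| \gg_K |A|$ and $\doubling[A'] \ll_K 1$.

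Applying Theorem \ref{grf} to $A'$ now yields a coset progression $H+P$ of rank $O(\doubling[A']) = O_K(1)$ and size $|H+P| \leq \exp(O(\doubling[A']^{O(1)})) |A'| \ll_K \exp(\Ent(X))$ containing $A'$; since also $|H+P| \geq |A'| \gg_K \exp(\Ent(X))$, we get $|H+P| \asymp_K \exp(\Ent(X))$. The elements of $A'$ then number $\gg_K |A| \gg_K |H+P|$, lie in $H+P$, and each satisfies $p_X(x) \asymp_K \exp(-\Ent(X))$ because $A' \subseteq A$; this is precisely \eqref{pax}, completing the argument.

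There is no substantial obstacle at this stage: all of the work specific to the entropy setting --- namely extracting from $\doubling[X] \leq K$ a set $A$ with prescribed cardinality and near-uniform density, and then lower-bounding its additive energy --- has already been done in Propositions \ref{xa} and \ref{xa2}. What remains (the popular-sums dyadic pigeonhole, and the two black-box applications of Lemma \ref{bsg} and Theorem \ref{grf}) is entirely standard, so the only point demanding care is tracking how the implied constants depend on $K$ through the successive refinements.
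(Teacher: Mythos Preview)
Your proposal is correct and follows precisely the route the paper takes: extract from the energy bound of Proposition~\ref{xa2} a dense graph $E \subset A \times A$ with small partial sumset, apply the Balog--Szemer\'edi--Gowers lemma (Lemma~\ref{bsg}) to get $A' \subset A$ of small doubling, and then invoke the Green--Ruzsa Freiman theorem (Theorem~\ref{grf}). Your write-up is in fact slightly more detailed than the paper's one-paragraph sketch, spelling out the popular-sums threshold step and the final bookkeeping that transfers the density estimate \eqref{aprob} from $A$ to the coset progression.
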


Now we are ready to prove Proposition \ref{main}. 
Let $X_1, X_2$ be independent copies of $X$, let $Y_2$ be the indicator of the event $X_2 \in H+P$, and let $X'_2$ be the conditioning of $X_2$ to the event $X_2 \in H+P$.  Let $U$ be a uniform distribution on $H+P$, taken to be independent of $X_1, X'_2$. From Corollary \ref{cos} we have 
\begin{equation}\label{sor}
\P( X_2 \in H+P ) \asymp 1
\end{equation}
and 
\begin{equation}\label{did}
\Ent(X'_2) = \Ent(X) + O(1) = \log |H+P| + O(1).
\end{equation}
(The lower bound on $\Ent(X'_2)$ follows from \eqref{pax} and the definition of entropy; the upper bound follows from Jensen's inequality, Lemma \ref{jensen}.)

Next, observe that
\begin{align*}
 \Ent(X_1 + X_2) &\geq \Ent(X_1+X_2|Y_2)\\
&= \P( X_2 \in H+P ) \Ent(X_1+X_2|X_2 \in H+P) + \P( X_2 \not \in H+P) \Ent(X_1+X_2|X_2 \not \in H+P) \\
&\geq \P( X_2 \in H+P ) \Ent(X_1+X'_2) + \P( X_2 \not \in H+P) \Ent(X_1) \\
&= \Ent(X_1) + \P( X_2 \in H+P ) (\Ent(X_1+X'_2) - \Ent(X_1)).
\end{align*}
By the hypothesis $\sigma[X] \leq K$, one has $\Ent(X_1+X_2) \leq \Ent(X_1)+O(1)$.  Applying \eqref{sor}, one concludes that
$$ \Ent(X_1+X'_2) \leq \Ent(X_1) + O(1).$$
From this and \eqref{did} we see that $\dist_R(X_1,-X'_2) = O(1)$.
Meanwhile, from Jensen's inequality one has
\begin{align*}
\Ent(X'_2 - U) &\leq \log |(H+P)-(H+P)| \\
&\leq \log |H+P| + O(1),
\end{align*}
which implies that $\dist_R(X'_2,U) = O(1)$.  Applying the triangle inequality \eqref{ruzsa-triangle} we obtain the claim.

The proof of Proposition \ref{main}, and thus Theorem \ref{iest}, is now complete.

\section{Proof of Theorem \ref{abb}}\label{abb-sec}

We now prove Theorem \ref{abb}.  The basic idea is to get enough control on $X$ that one can find a ``smooth'' direction in which to approximate the discrete random variable by a continuous one.

Fix $\eps$, and assume $X$ to be a $G$-random variable with $\Ent(X)$ sufficiently large depending on $\eps$.  We assume for contradiction that the claim failed, thus (after adjusting $\eps$ slightly)
$$ \Ent( X_1 + X_2 ) < \Ent( X ) + \frac{1}{2} \log 2 - \eps$$
We can then apply Theorem \ref{iest}(ii) and express $X = U+Z$, where $U$ is the uniform distribution in a coset progression $H+P$ of rank $O(1)$ and cardinality $O( \exp( \Ent(X) ) )$, and $\Ent(Z) = O(1)$.  Since $G$ is torsion-free, the $H$ component of the coset progression is trivial, thus $U$ is just the uniform distribution on $P$.

Since $\Ent(Z) = O(1)$, we have
$$ \sum_z p_Z(z) \log \frac{1}{p_Z(z)} = O(1).$$
Let $0 < \delta < 1/2$ be a small number depending on $\eps$ to be chosen later.  Let $A := \{z: p_Z(z) \geq \delta \}$, thus $|A| \leq  1/\delta$.  Also, since
$$ \sum_z p_Z(z) \log \frac{1}{p_Z(z)} \geq (\log \frac{1}{\delta}) \sum_{z \not \in A} p_Z(z) = (\log \frac{1}{\delta})  \P( Z \not \in A )$$
we see that
$$ \P( Z \not \in A ) \ll \frac{1}{\log \frac{1}{\delta}}.$$
This implies that
$$ \Ent(1_{Z \in A}) \ll \frac{\log\log \frac{1}{\delta}}{\log \frac{1}{\delta}}.$$
This implies from \eqref{e-sob} that
$$ \Ent(X|1_{Z \in A}) \geq \Ent(X) - O( \frac{\log\log \frac{1}{\delta}}{\log \frac{1}{\delta}} )$$
and thus
\begin{equation}\label{ea}
\Ent(X|Z \in A) \P(Z \in A) + \Ent(X|Z \not \in A) \P( Z \not \in A) \geq \Ent(X) - O(\frac{\log\log \frac{1}{\delta}}{\log \frac{1}{\delta}}).
\end{equation}
If we let $X_1,Z_1$ and $X_2,Z_2$ be independent copies of $X,Z$, then we have
\begin{align*}
\Ent(X_1+X_2) &\geq \Ent(X_1+X_2|1_{Z_1 \in A},1_{Z_2 \in A}) \\
&\geq \Ent(X_1+X_2|Z_1, Z_2 \in A) \P(Z \in A)^2 \\
&\quad + \Ent(X_1+X_2|Z_1 \in A; Z_2 \not \in A) \P(Z \in A) (1 - \P(Z \in A)) \\
&\quad + \Ent(X_1+X_2|Z_2 \in A; Z_1 \not \in A) \P(Z \in A) (1 - \P(Z \in A)) \\
&\quad + \Ent(X_1+X_2|Z_1,Z_2 \not \in A) (1 - \P(Z \in A))^2.
\end{align*}
From Lemma \ref{triv} one has
\begin{align*}
 \Ent(X_1+X_2|Z_2 \in A; Z_1 \not \in A) &\geq \Ent(X|Z \in A) \\
 \Ent(X_1+X_2|Z_1 \in A; Z_2 \not \in A) &\geq \Ent(X|Z \not \in A) \\
 \Ent(X_1+X_2|Z_1,Z_2 \not \in A) &\geq \Ent(X|Z \not \in A) 
\end{align*}
so from \eqref{ea} we see that
$$
\Ent(X_1+X_2) \geq \Ent(X) - O( \frac{\log \log \frac{1}{\delta}}{\log \frac{1}{\delta}}) + (\Ent(X_1+X_2|Z_1,Z_2 \in A) - \Ent(X|Z \in A)) \P(Z \in A)^2.$$
Thus, by taking $\delta$ small enough, it will suffice to show that
\begin{equation}\label{entropy}
 \Ent(X'_1+X'_2) \geq \Ent(X') + \frac{1}{2} \log 2 - \eps/2
\end{equation}
(say), where $X' := (X|Z \in A)$ and $X'_1, X'_2$ are independent copies of $X'$.  

Observe that $X'$ ranges in the set $A+P$; since $|A| \ll_\delta 1$, we may place $A+P$ inside a progression $Q$ of rank $O_\delta(1)$ and size $O_\delta( \exp(\Ent(X) ) ) = O_\delta(|P|)$; by \cite[Theorem 1.9]{tv-john}, we may assume that $Q$ is $4$-proper, thus
$$ Q = \{ a + n_1 v_1 + \ldots + n_d v_d: n_1 \in [0,N_1), \ldots, n_d \in [0,N_d) \}$$
for some $d = O_\delta(1)$ and integers $N_1,\ldots,N_d$, and the sums $a+n_1 v_1 + \ldots + n_d v_d$ for $n_1 \in [0,4N_1),\ldots, n_d \in [0,4N_d)$ are all distinct.  Using a Freiman isomorphism (see e.g. \cite[Section 5.3]{tao-vu}), we may thus identify $Q$ with the box $B := [0,N_1) \times \ldots \times [0,N_d)$ in $\Z^d$.  Without loss of generality we may assume that $N_1 \geq \dots \geq N_d$, in particular $N_1 \gg_\delta \exp( \Ent(X)/d )$.

Let $X''$ be the counterpart of $X'$ in $B$, thus $X''$ is Freiman isomorphic to $X'$.  
Since $X' = (U+Z|Z \in A)$, with $U$ the uniform distribution on $P$, we see that
\begin{equation}\label{pax2}
 p_{X''}(x) \ll_\delta 1/|P| \asymp_\delta 1/|B|
\end{equation}
for all $x \in B$.

Now we establish some ``smoothness'' in the probability distribution function $p_{X''_1+X''_2}$ in some short direction, as measured using the total variation metric \eqref{tv-def}.

\begin{lemma}[Smoothness of $p_{X''_1+X''_2}$]  Let $0 < \mu < 1$.  Then, if $\Ent(X)$ is sufficiently large depending on $\mu, \delta$, there exists $r \in [1, N_1) \times \{0\} \times \dots \times \{0\}$ with $|r| \ll_\delta \mu^{-O_\delta(\mu^{-2})}$ such that
\begin{equation}\label{b-plus}
\dist_{TV}( X''_1+X''_2 + r, X''_1+X''_2 ) \ll_\delta \mu.
\end{equation}
\end{lemma}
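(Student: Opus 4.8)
The plan is to pass to the Fourier side, where the total variation statement becomes a Diophantine statement about annihilating the large Fourier coefficients of $p_{X''}$, and then to read off a short vector from the fact that $X''$ is (up to a Freiman isomorphism) built from a uniform distribution on a progression of bounded rank.

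First I would set up the Fourier reduction. Write $f := p_{X''}$, regard $B$ as a subset of $\Z^d$, and for $\theta \in \mathbb{T}^d := (\R/\Z)^d$ put $\hat f(\theta) := \sum_x f(x) e^{-2\pi i \theta\cdot x}$. From \eqref{pax2} one has $\|f\|_{\ell^2(\Z^d)}^2 \asymp_\delta 1/|B|$, hence (Young's inequality) $p_{X''_1+X''_2}=f*f$ has $\ell^2$ norm $\ll_\delta |B|^{-1/2}$ and is supported on $B+B$, a set of cardinality $\ll_\delta |B|$. By Plancherel,
$$ \|p_{X''_1+X''_2}(\cdot-r) - p_{X''_1+X''_2}\|_{\ell^2(\Z^d)}^2 = 2\int_{\mathbb{T}^d}|\hat f(\theta)|^4\bigl(1-\cos 2\pi\theta\cdot r\bigr)\,d\theta \qquad (r\in\Z^d), $$
and since the difference is supported on a set of size $\ll_\delta|B|$, Cauchy--Schwarz converts an $\ell^2$ bound of order $\mu^2/|B|$ into the desired $\dist_{TV}(X''_1+X''_2+r,X''_1+X''_2)\ll_\delta\mu$. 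Thresholding at $|\hat f(\theta)|=\rho$ with $\rho:=c_\delta\mu$, the part of the integral over $\{|\hat f|<\rho\}$ is $\leq 2\rho^2\|f\|_{\ell^2}^2\ll_\delta\mu^2/|B|$, so it suffices to find a nonzero $r\in B$ with $|r|\ll_\delta\mu^{-O_\delta(1)}$ such that $\|\theta\cdot r\|_{\R/\Z}\leq c_\delta'\mu$ for every $\theta$ in the large spectrum $\Lambda:=\{\theta:|\hat f(\theta)|\geq\rho\}$ (using $\int_\Lambda|\hat f|^4\leq\|f\|_{\ell^2}^2\ll_\delta1/|B|$ for the remaining part).

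To produce such an $r$ I would use the structural information that has already been extracted: by Theorem \ref{iest}(ii) (and the way $B$ was built, as the box model of a $4$-proper progression $Q$ of rank $O_\delta(1)$ containing $A+P$), $X''$ is, after the Freiman isomorphism, obtained from a uniform distribution $V$ on a progression $P''\subseteq B$ of rank $O_\delta(1)$ with $|P''|\asymp_\delta|B|$ together with a variable $W$ of entropy $O_\delta(1)$ taking $O_\delta(1)$ values. After reducing to the case where $B$ is full-dimensional (absorbing any bounded side lengths into $W$), the progression $P''$ possesses a generating direction $v$ with $|v|\ll_\delta\mu^{-O_\delta(1)}$ whose associated length $M$ satisfies $M\to\infty$ as $\Ent(X)\to\infty$, since $|P''|\asymp_\delta|B|\to\infty$ and the rank is bounded. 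Conditioning $p_{X''_1+X''_2}$ on the $O_\delta(1)$ values of $(W_1,W_2)$ displays it as a bounded convex combination of translates of convolutions of the conditional laws of $V$ on $P''$; by Proposition \ref{xa}-type reasoning these conditional laws are near-uniform on dense subsets of $P''$, so each such convolution has a triangular profile of width $\asymp M$ in the $v$-direction, and shifting it by $v$ changes it by $O(1/M)$ in total variation, uniformly in $(W_1,W_2)$. Hence $\dist_{TV}(X''_1+X''_2+v,\,X''_1+X''_2)\ll_\delta 1/M\ll_\delta\mu$ once $\Ent(X)$ is large, and we take $r:=v$; the nonnegativity of the coordinates of $r$ is arranged from the symmetry of the set of admissible $r$ and the room left by the $4$-properness of $Q$.

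The main obstacle is the passage in the previous paragraph from "$X''$ sits over a bounded-rank progression" to "$p_{X''_1+X''_2}$ is almost $v$-invariant": the progression part $V$ and the noise part $W$ need not be independent, so one cannot simply factor $p_{X''_1+X''_2}$ as a convolution, and a generic density-$\gg_\delta1$ function on a box (which is all the bare bound \eqref{pax2} gives) can have as many as $\asymp_\delta\mu^{-2}$ large Fourier coefficients at unrelated frequencies, for which no polynomially short annihilating vector exists. The resolution is to condition on the boundedly many values of $W$ and show — going back into the proof of the inverse theorem — that each conditional law of $V$ is still essentially uniform on a dense subset of $P''$, so that its large spectrum is trapped near the (bounded-rank) dual of $P''$; one then has to track the lengths in that dual progression and carry out the geometry-of-numbers extraction carefully so that the resulting $r$ is short in terms of $\mu,\delta$ alone and fits inside $B$.
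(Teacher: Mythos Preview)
Your first paragraph is essentially the paper's argument: pass to an ambient group, use Plancherel together with \eqref{pax2} to bound $\sum_\chi |\hat p_{X''}(\chi)|^2 \ll_\delta 1$, threshold at level $\mu$ to obtain a large spectrum $\Lambda$ of size $\ll_\delta \mu^{-2}$, and reduce the total-variation estimate (via Cauchy--Schwarz on a support of size $\ll_\delta |B|$) to finding a short nonzero $r$ with $|\chi(r)-1|$ small for every $\chi \in \Lambda$. The paper carries this out inside the finite group $G' := \prod_j \Z/3N_j\Z$ rather than over $\mathbb{T}^d$, which sidesteps some boundary bookkeeping, but the idea is the same.

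Where you go astray is in producing $r$. The paper uses no further structural information about $X''$ whatsoever: having reduced to a set $\Lambda$ whose cardinality is bounded purely in terms of $(\mu,\delta)$, it simply invokes the Kronecker (simultaneous Dirichlet) approximation theorem. Since $N_1\cdots N_d \to \infty$ with $\Ent(X)$ while $d = O_\delta(1)$, the box $[0,N_1)\times\cdots\times[0,N_d)$ is eventually large enough for pigeonhole to extract a nonzero $r$, of size bounded in terms of $(\mu,\delta)$ alone, with $|\chi(r)-1|\leq\mu^2$ for every $\chi\in\Lambda$. Your second and third paragraphs --- returning to the inverse-theorem decomposition, locating a generating direction $v$ of $P''$, conditioning on the noise $W$, and worrying about the dependence between $V$ and $W$ --- are therefore unnecessary, and (as you yourself diagnose) incomplete. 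Your observation that $\asymp_\delta \mu^{-2}$ generic frequencies need not admit a \emph{polynomially} short common near-annihilator is correct as far as it goes, but it is beside the point: Kronecker still gives $|r|$ bounded by \emph{some} function of $(\mu,\delta)$, and inspection of the downstream argument in Section~\ref{abb-sec} shows that is all that is actually used.
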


\begin{proof} For this lemma it is convenient to embed $B$ and $X''$ inside the finite group
$$G' := \Z/3N_1\Z \times \ldots \times \Z/3N_d\Z,$$
thus $p_{X''}$ is now a function on $G'$.  The left-hand side of \eqref{b-plus} can thus be written as
$$ \sum_{x \in G'} |p_{X''}*p_{X''}(x+r) - p_{X''}*p_{X''}(x)|$$
where $p_{X''}*p_{X''}$ is the convolution
$$ p_{X''}*p_{X''}(x) := \sum_{y \in G'} p_{X''}(y) p_{X''}(x-y).$$
We introduce the Fourier coefficients
$$ \hat p_{X''}(\chi) := \sum_{x \in G'} p_{X''}(x) \overline{\chi(x)}$$
for all characters $\chi: G' \to S^1$ in the Pontraygin dual $\hat G'$ of $G'$. 
From Plancherel's theorem and \eqref{pax2} one has
\begin{equation}\label{mud}
\begin{split}
\sum_{\chi \in \hat G'} |\hat p_{X''}(\chi)|^2 &= |G'| \sum_{x \in G'} |p_{X''}(x)|^2 \\
&\ll_\delta 1.
\end{split}
\end{equation}
Thus, if we set
\begin{equation}\label{lamd}
\Lambda := \{ \chi \in \hat G': |\hat p_{X''}(\chi)| \geq \mu \}
\end{equation}
then
\begin{equation}\label{lama}
 |\Lambda| \ll_{\delta} \mu^{-2}.
\end{equation}
If $\Ent(X)$ is large, then $N_1 \ldots N_d$ is large.  By the Kronecker approximation theorem (see e.g. \cite[Corollary 3.25]{tao-vu}), if $\Ent(X)$ is large enough depending on $\delta,\mu$, we may thus find $r \in [1, N_1) \times \{0\} \times \dots \times \{0\}$ with $|r| \ll_\delta \mu^{-O_\delta(\mu^{-2})}$ such that
\begin{equation}\label{chira}
 |\chi(r) - 1| \leq \mu^2
\end{equation}
for all $\chi \in \Lambda$.

Fix this $r$.  From the Fourier inversion formula one has
$$ p_{X''}*p_{X''}(x) = \frac{1}{|G'|} \sum_{\chi \in \hat G'} \hat p_{X''}(\chi)^2 \chi(x)$$
and thus
$$ p_{X''}*p_{X''}(x+r) - p_{X''}*p_{X''}(x) = \frac{1}{|G'|} \sum_{\chi \in \hat G'} \hat p_{X''}(\chi)^2 (\chi(r)-1) \chi(x).$$
By Plancherel's theorem, we conclude
$$ \sum_{x \in G'} |p_{X''}*p_{X''}(x+r) - p_{X''}*p_{X''}(x)|^2 = \frac{1}{|G'|} \sum_{\chi \in \hat G'} |\hat p_{X''}(\chi)|^4 |\chi(r)-1|^2.$$
From \eqref{lama}, \eqref{chira} the contribution of the terms with $\chi \in \Lambda$ are $O_\delta(\mu^2/|G'|)$; by \eqref{lamd}, \eqref{mud}, the contribution of the terms with $\chi \not \in \Lambda$ are also $O_\delta(\mu^2/|G'|)$.  We thus have
$$ \sum_{x \in G'} |p_{X''}*p_{X''}(x+r) - p_{X''}*p_{X''}(x)|^2 \ll_\delta \mu^2/|G'|$$
and the claim follows from the Cauchy-Schwarz inequality.
\end{proof}

Let $0 < \mu < 1$ be a small number (depending on $\delta,\eps$) to be chosen later, and let $r$ be as in the above lemma.  We can write $r = mr'$, where $m \geq 1$ is an integer with 
\begin{equation}\label{mosey}
m \ll_\delta \mu^{-O_\delta(\mu^{-2})}, 
\end{equation}
and $r'$ is irreducible in $\Z^d$.  Applying yet another Freiman isomorphism, we may then map $B$ to the integers so that $r$ maps to $m$.  If $X'''$ is the image of $X''$ under this isomorphism, then $X'''$ is isomorphic to $X'$, ranges over at most $|B|$ values, and
\begin{equation}\label{paxum}
\dist_{TV}( X'''_1 + X'''_2 + m, X'''_1 + X'''_2 ) \ll_\delta \mu
\end{equation}
Our task is now to show that
\begin{equation}\label{saxon}
\Ent(X'''_1 + X'''_2) \geq \Ent(X''') + \frac{1}{2} \log 2 - \eps/2.
\end{equation}

To motivate the general argument later, let us first consider the simpler \emph{irreducible case} when $m=1$, thus the distribution function $p_{X'''_1 + X'''_2}(x)$ looks ``locally smooth''.  To exploit this, let $U$ be the \emph{continuous} random variable uniformly distributed in $[0,1]$, independent of $X'''$.   Recall that the continuous Shannon entropy $\Ent_\R(V)$ of a random variable on $\R$ with distribution $p_V(x)\ dx$ is given by 
$$ \Ent_\R(V) := \int_\R F(p_V(x))\ dx.$$
A short computation then relates the continuous Shannon entropy to the discrete entropy:
$$ \Ent_\R( X''' + U ) = \Ent(X''').$$
Now let us look at the continuous variable $V := X'''_1 + U_1 + X'''_2 + U_2$.  We write
$$ \Ent_\R(V) = \log |P| + \int_\R F(p_V(x)) -  p_V(x) \log |P|\ dx$$
where $p_V$ is the density function of $V$.
 
Observe that for any $x \in [n,n+1]$, the density function $p_V(x)$ of $V$ at $x$ is equal to some average of $p_{X'''_1+X'''_2}(n)$, $p_{X'''_1+X'''_2}(n-1)$, thus
$$ p_V(x) = p_{X'''_1+X'''_2}(n) + O( g(n) )$$
where
$$ g(n) := |p_{X'''_1+X'''_2}(n) - p_{X'''_1+X'''_2}(n-1)|.$$
In particular $p_V(x) \ll_\delta 1/|P|$, by \eqref{pax2}.  Using the elementary estimate
$$ F(b) - b \log |P| = F(a) - a \log |P| + O_\delta( (\frac{\mu}{|P|} + |b-a|) \log \frac{1}{\mu} )$$
when $a, b \ll_\delta 1/|P|$ (which arises from the fact that $F'(c) = \log |P| + O_\delta( \log \frac{1}{\mu} )$ for $\mu/|P| \leq c \ll_\delta 1/|P|$), we thus have
$$ \Ent_\R(V) = \log |P| + \sum_{n \in \range(X'''_1+X'''_2)+\{0,1\}} F(p_{X'''_1+X'''_2}(n)) -  p_{X'''_1+X'''_2}(n) \log |P| +
O_\delta( (\frac{\mu}{|P|} + g(n)) \log \frac{1}{\mu} ).$$
From \eqref{paxum}, $\sum_{n \in \range(X'''_1+X'''_2)+\{0,1\}} (\frac{\mu}{|P|} + g(n)) \ll_\delta \mu$, and thus
$$ \Ent_\R(V) = \Ent(X'''_1+X'''_2) + O_\delta( \mu \log \frac{1}{\mu} ).$$
On the other hand, from Shannon's inequality \eqref{st}, we have
$$ \Ent_\R(V) \geq \Ent_\R(X'''+U) + \frac{1}{2} \log 2.$$
Putting all this together, one obtains
$$ \Ent(X'''_1+X'''_2) \geq \Ent(X''') + \frac{1}{2} \log 2 - O_\delta( \mu \log \frac{1}{\mu} )$$
and the claim \eqref{saxon} follows by taking $\mu$ small enough.

Now we return to the general case, when $m$ is not necessarily $1$.  We then introduce the random variable $W := X''' \hbox{ mod } m \in \Z/m\Z$, and define $W_1, W_2$ analogously.  Then
$$ 
\Ent(X'''_1 + X'''_2) \geq \Ent(X'''_1 + X'''_2|W_1).$$
Observe that $X'''_1+X'''_2$ and $W_1$ determine $W_2$, and thus
$$ \Ent(X'''_1 + X'''_2|W_1) = \Ent(W_2) + \Ent(X'''_1 + X'''_2|W_1,W_2).$$
We can write
$$ \Ent(X'''_1 + X'''_2|W_1,W_2) = \sum_{w_1, w_2 \in \Z/m\Z} p_{W_1}(w_1) p_{W_2}(w_2) \Ent( X_{1,w_1} + X_{2,w_2} )$$
where for $i=1,2$, $X_{i,w_i}$ is the $\Z$-random variable $(X_i - w_i)/m$ conditioned to the event $W_i = w_i$.  Meanwhile,
$$ \Ent(X''') = \Ent(W) + \sum_{w_1 \in \Z/m\Z} p_{W_1}(w_1) \Ent(X_{1,w_1})$$
and similarly with the $1$ index replaced by $2$, thus
$$ \Ent(X''') = \Ent(W) + \sum_{w_1,w_2 \in \Z/m\Z} p_{W_1}(w_1)p_{W_2}(w_2) \frac{1}{2} (\Ent(X_{1,w_1}) + \Ent(X_{2,w_2}));$$
putting all this together, we see that to show \eqref{saxon}, it will suffice to show that
\begin{equation}\label{saxon-2}
\sum_{w_1, w_2 \in \Z/m\Z} p_{W_1}(w_1) p_{W_2}(w_2) [ \Ent( X_{1,w_1} + X_{2,w_2} ) - \frac{1}{2} (\Ent(X_{1,w_1}) + \Ent(X_{2,w_2}))] \geq \frac{1}{2} \log 2 - \eps/2.
\end{equation}
From Lemma \ref{triv}, the expression in brackets is non-negative.  Thus we may restrict the sum to a smaller range of $w_1, w_2$; more specifically, we will restrict to the range where
\begin{equation}\label{powa}
 p_{W_1}(w_1), p_{W_2}(w_2) \geq \mu / m.
\end{equation}
On this range, we have from \eqref{pax2}, \eqref{mosey} that
$$ p_{X_{i,w_i}}(x) \ll_\delta \mu^{-1} m/|P|$$
for all $i=1,2$ and $x \in \Z$; also observe that $X_{i,w_i}$ takes on at most $O(|P|/m)$ values.  From \eqref{paxum} we have
\begin{equation}\label{powc}
\sum_{w_i \in \Z/m\Z} p_{W_i}(w_i) \dist_{TV}(X_{i,w_i}+1, X_{i,w_i}) \ll_\delta \mu
\end{equation}
for $i=1,2$.  We will now restrict $w_1,w_2$ further, by imposing the additional restriction
\begin{equation}\label{powb}
\dist_{TV}(X_{i,w_i}+1, X_{i,w_i}) \leq \mu^{1/2}
\end{equation}
for $i=1,2$.

Now we repeat the arguments from the $m=1$ case.  Let $U_1, U_2$ be independent copies of the uniform distribution of $[0,1]$, then as before we have
$$ \Ent_\R( X_{i,w_i} + U_i ) = \Ent( X_{i,w_i} )$$
and
$$ \Ent_\R(X_{1,w_1} + U_1 + X_{2,w_2} + U_2 ) = \Ent( X_{1,w_1} + X_{2,w_2} ) + O_\delta( \mu^{1/2} \log \frac{1}{\mu} );$$
applying \eqref{st}, we conclude
$$ \Ent( X_{1,w_1} + X_{2,w_2} ) - \frac{1}{2} (\Ent(X_{1,w_1}) + \Ent(X_{2,w_2})) \geq \frac{1}{2} \log 2 - O_\delta( \mu^{1/2} \log \frac{1}{\mu} ).$$
To conclude the proof of \eqref{saxon-2}, it thus suffices (on taking $\mu$ small enough) to show that
$$ \sum_{w_1,w_2} p_{W_1}(w_1) p_{W_2}(w_2) \geq 1 - \eps/4$$
(say), where $w_1,w_2$ range over all pairs in $\Z/m\Z$ obeying \eqref{powa}, \eqref{powb}.  But the contribution of those $w_1,w_2$ that fail to obey \eqref{powa} is $O(\mu)$, while from \eqref{powc} the contribution of the $w_1,w_2$ that fail to obey \eqref{powb} is $O(\mu^{1/2})$, and the claim follows by taking $\mu$ small enough.

\appendix

\section{Basic properties of entropy}\label{shannon-sec}

The function $F(x) := x \log \frac{1}{x}$ defined in \eqref{F-def} has first derivative
$$ F'(x) = \log \frac{1}{x} - 1$$
and second derivative
$$ F''(x) = -\frac{1}{x}$$
for $x>0$; from this one easily concludes that $F$ is concave on $\R^+$, and increasing for $x<1/e$.  In particular, we have the upper bound
\begin{equation}\label{upper}
F(x) \leq F(1/e) = 1/e,
\end{equation}
the inequality
\begin{equation}\label{sublinear}
F(y) \leq F(x) + F'(x) (y-x)
\end{equation}
for all $y \geq 0$ and $x > 0$, and the subadditivity property
\begin{equation}\label{subadd}
F(x+y) \leq F(x)+F(y)
\end{equation}
for all $x,y \geq 0$. In particular we have the triangle inequality
\begin{equation}\label{triangle}
|F(a)-F(b)| \leq F(|a-b|)
\end{equation}
for $0 \leq a, b \leq 1/e$.  From the identity
$$ F(x) + F'(x) (y-x) - F(y) = y ( \frac{x}{y} - 1 - \log \frac{x}{y} )$$
and \eqref{sublinear} we obtain the bound
\begin{equation}\label{sub-bound}
F(x) + F'(x) (y-x) - F(y) = y \log_+ \frac{y}{x} + O(x) + O(y)
\end{equation}
where $\log_+ x := \max(\log x, 0)$.  Finally, from the identity
$$ F(ax) = F(a) F(x) (\frac{1}{\log \frac{1}{a}} + \frac{1}{\log \frac{1}{x}} )$$
we see that
\begin{equation}\label{fax}
F(ax) \leq 2 F(a) F(x)
\end{equation}
whenever $0 \leq a, x \leq 1/e$.

\begin{lemma}[Jensen bound]\label{jensen}  Let $A$ be a finite set, and let $X$ be an $A$-random variable.  Then $\Ent(X) \leq \log |A|$.  Furthermore, if
$$ \Ent(X) \geq \log |A| - \log K$$
for some $K \geq 1$, then
$$
\sum_{k=1}^\infty 2^k \P( X \in A_k ) \ll 1 + \log K
$$ 
where
$$ A_k := \{ x \in A: 2^{2^{k-1}} \leq p_X(x) |A| \leq 2^{2^k} \}.$$
\end{lemma}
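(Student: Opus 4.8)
The first claim is just Jensen's inequality: writing $R := \range(X)$, concavity of $\log$ gives
$$ \Ent(X) = \sum_{x \in R} p_X(x) \log \frac{1}{p_X(x)} \leq \log \sum_{x \in R} p_X(x) \frac{1}{p_X(x)} = \log |R| \leq \log |A|. $$
For the second claim, the plan is to first recognise the entropy deficit as a relative entropy. Writing $q(x) := |A| p_X(x)$, one has $\sum_{x \in A} q(x) = |A|$, and the hypothesis $\Ent(X) \geq \log|A| - \log K$ rearranges to
$$ \sum_{x \in A} p_X(x) \log q(x) = \log|A| - \Ent(X) \leq \log K. $$
(The left-hand side is $\geq 0$ by the first part, though we will not need this.)

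Next I would establish the pointwise bound $\sum_{k \geq 1} 2^k 1_{x \in A_k} \ll \log_+ q(x)$ for each $x \in A$. Indeed, by definition $x \in A_k$ forces $2^{k-1} \leq \log_2 q(x) \leq 2^k$; the left inequality shows the indicator vanishes unless $q(x) \geq 2$, and in that case it is non-zero only for integers $k$ in the range $\log_2 \log_2 q(x) \leq k \leq \log_2 \log_2 q(x) + 1$, which contains at most two of them. Summing $2^k$ over this range gives a quantity $\ll 2^{\log_2 \log_2 q(x)} = \log_2 q(x) \asymp \log q(x) = \log_+ q(x)$ (the last equality since $q(x) \geq 2$), while the sum is $0$ when $q(x) < 2$. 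Multiplying by $p_X(x)$ and summing over $x \in A$, it remains to bound $\sum_{x \in A} p_X(x) \log_+ q(x) \ll 1 + \log K$.

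Finally I would compare $\log_+ q$ with $\log q$: they coincide where $q(x) \geq 1$, while where $q(x) < 1$ one has $\log_+ q(x) = 0$ and $\log q(x) < 0$, so
$$ \sum_{x \in A} p_X(x) \log_+ q(x) = \sum_{x \in A} p_X(x) \log q(x) + \sum_{x \in A:\, q(x) < 1} p_X(x) \log \frac{1}{q(x)} = \sum_{x \in A} p_X(x) \log q(x) + \frac{1}{|A|} \sum_{x \in A:\, q(x)<1} F(q(x)). $$
By \eqref{upper} each term $F(q(x)) \leq 1/e$, and there are at most $|A|$ of them, so the last sum is $\leq 1/e$; combined with the deficit bound above this yields $\sum_{x \in A} p_X(x) \log_+ q(x) \leq \log K + 1/e \ll 1 + \log K$, as required. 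I do not expect any genuine obstacle here: the only step needing care is the sign of $\log q$ on the low-density region $\{q < 1\}$, which is dispatched by the crude estimate $F \leq 1/e$; the one conceptual point is the observation that the entropy deficit $\log|A| - \Ent(X)$ is exactly the Kullback--Leibler divergence of $p_X$ from the uniform distribution.
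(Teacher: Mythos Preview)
Your proof is correct and follows essentially the same approach as the paper: both recognise the entropy deficit $\log|A| - \Ent(X)$ as (a form of) the relative entropy $\sum_x p_X(x)\log_+(|A|p_X(x))$ up to an $O(1)$ error, and then pass to the dyadic sum over the $A_k$. The only cosmetic difference is that the paper packages the comparison of $\log$ and $\log_+$ (your ``low-density region'' step using $F\le 1/e$) into the general estimate \eqref{sub-bound}, whereas you carry it out explicitly.
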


\begin{proof}  For the first bound, we observe that
\begin{align*}
\Ent(X) &= \sum_{x \in A} F(p_X(x)) \\
&\leq \sum_{x \in A} F( \frac{1}{|A|} ) + F'(\frac{1}{|A|}) (p_X(x) - \frac{1}{|A|} ) \\
&= \log |A|
\end{align*}
as required.  Similarly, if $\Ent(X) \geq \log |A| - \log K$, then the above argument shows that
$$
\sum_{x \in A} F( \frac{1}{|A|} ) + F'(\frac{1}{|A|}) (p_X(x) - \frac{1}{|A|} ) - F(p_X(x)) \leq \log K.
$$
From \eqref{sublinear}, the summand is non-negative; from \eqref{sub-bound}, the summand is $p_X(x) \log(|A| p_X(x)) + O( p_X(x) )$
for $p_X(x) \geq 1/|A|$, and the claim follows by decomposing the $x$ variable into the sets $A_k$.
\end{proof}

Let $X$ be a discrete random variable, and let $E$ be an event which occurs with positive probability.  Then we can define the \emph{conditioned random variable} $(X|E)$ by restricting the underlying probability measure to $E$ (and then dividing out by $\P(E)$ to recover the normalisation), thus
$$ p_{(X|E)}(x) = \P( x \in X \wedge E ) / \P(E).$$
In the special case where $E$ is an event of the form $X \in A$ for some set $A$, we conclude that
$$ p_{(X|X \in A)}(x) = \frac{1_A(x) p_X(x)}{\sum_{y \in A} p_X(y)}.$$
Given two random variables $X, Y$ (not necessarily independent), we define the \emph{conditional entropy} $\Ent(X|Y)$ by the formula
\begin{equation}\label{exy}
 \Ent(X|Y) := \sum_{y \in \range(Y)} p_Y(y) \Ent(X|Y=y).
\end{equation}
A standard computation reveals the identity
\begin{equation}\label{eident}
\Ent(X|Y) = \Ent(X,Y) - \Ent(Y),
\end{equation}
and in particular
\begin{equation}\label{eo}
\Ent(X|Y) = \Ent(X,Y|Y).
\end{equation}
Meanwhile, one has the total probability formula
$$
p_X(x) = \sum_{y \in \range(Y)} p_Y(y) p_{(X|Y=y)}(x);$$
comparing this with \eqref{exy} and Jensen's inequality using the concavity of $F$ we conclude that
\begin{equation}\label{ento}
\Ent(X|Y) \leq \Ent(X)
\end{equation}
with equality if and only if $(X|Y=y) \equiv X$ for all $y \in \range(Y)$, or in other words if $X$ and $Y$ are independent.  From this and \eqref{eident} we conclude that
\begin{equation}\label{ent-sum-}
\Ent(X,Y) \leq \Ent(X) + \Ent(Y)
\end{equation}

We say that a discrete random variable $Y$ is \emph{determined} by another discrete random variable $X$, if one has $Y=f(X)$ for some function $f: \range(X) \to \range(Y)$.  From the subadditivity property \eqref{subadd} we see that
\begin{equation}\label{entyx}
\Ent(Y) \leq \Ent(X)
\end{equation}
whenever $X$ determines $Y$.  For instance, since $(X,Y)$ determines both $X$ and $Y$, 
\begin{equation}\label{ent-0}
\Ent(X), \Ent(Y) \leq \Ent(X,Y),
\end{equation}
and hence by \eqref{eident}, \eqref{ento}
\begin{equation}\label{e-sob}
\Ent(X) - \Ent(Y) \leq \Ent(X|Y) \leq \Ent(X).
\end{equation}
If $X$ determines $Y$, then $X$ and $(X,Y)$ determine each other, and so $\Ent(X,Y) = \Ent(X)$; in particular,
\begin{equation}\label{yush}
 \Ent(X|Y) = \Ent(X) - \Ent(Y)
\end{equation}
and $\Ent(Y|X)=0$.

We have the following useful inequality:

\begin{lemma}[Submodularity inequality]\label{submodularity}  If $X_0,X_1,X_2,X_{12}$ are random variables such that $X_1$ and $X_2$ each determine $X_0$, and $(X_1,X_2)$ determine $X_{12}$, then 
$$ \Ent(X_{12}) + \Ent(X_0) \leq \Ent(X_1) + \Ent(X_2).$$
\end{lemma}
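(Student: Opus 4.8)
The plan is to reduce everything to the chain rule \eqref{eident}, the monotonicity \eqref{entyx} of entropy under determination, and the fact \eqref{ento} that conditioning does not increase entropy. First I would dispose of $X_{12}$: since $(X_1,X_2)$ determines $X_{12}$, the bound \eqref{entyx} gives $\Ent(X_{12}) \leq \Ent(X_1,X_2)$, so it suffices to prove the sharper statement
$$ \Ent(X_1,X_2) + \Ent(X_0) \leq \Ent(X_1) + \Ent(X_2).$$

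Next I would expand the joint entropy on the left via the chain rule \eqref{eident}, writing $\Ent(X_1,X_2) = \Ent(X_1) + \Ent(X_2|X_1)$; after cancelling $\Ent(X_1)$ the desired inequality becomes $\Ent(X_2|X_1) \leq \Ent(X_2) - \Ent(X_0)$. Since $X_2$ determines $X_0$, identity \eqref{yush} gives $\Ent(X_2|X_0) = \Ent(X_2) - \Ent(X_0)$, so the target simplifies to $\Ent(X_2|X_1) \leq \Ent(X_2|X_0)$.

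To establish this I would use that $X_1$ determines $X_0$: then $X_1$ and the pair $(X_1,X_0)$ determine each other, so $\Ent(X_1,X_0) = \Ent(X_1)$ and, similarly, $(X_1,X_2)$ and $(X_1,X_0,X_2)$ determine each other, so $\Ent(X_1,X_0,X_2) = \Ent(X_1,X_2)$; applying \eqref{eident} twice then yields $\Ent(X_2|X_1) = \Ent(X_1,X_2)-\Ent(X_1) = \Ent(X_1,X_0,X_2)-\Ent(X_1,X_0) = \Ent(X_2|X_1,X_0)$. Finally, conditioning on the pair $(X_1,X_0)$ rather than on $X_0$ alone cannot increase entropy: applying \eqref{ento} inside each fibre $\{X_0 = x_0\}$ (to the restrictions of $X_2$ and $X_1$ to that event) and averaging over $x_0$ with weights $p_{X_0}(x_0)$ gives $\Ent(X_2|X_1,X_0) \leq \Ent(X_2|X_0)$. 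Chaining these (in)equalities produces $\Ent(X_2|X_1) \leq \Ent(X_2|X_0) = \Ent(X_2) - \Ent(X_0)$, which is exactly what was needed.

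I do not expect a genuine obstacle here; the only step requiring a line of care is the last one, the conditional form of ``conditioning reduces entropy,'' which is not stated verbatim in the appendix but follows at once by applying \eqref{ento} to each fibre of $X_0$. It is also worth remarking that the hypotheses are used asymmetrically but completely: that $X_1$ determines $X_0$ enters in the identity $\Ent(X_2|X_1)=\Ent(X_2|X_1,X_0)$, while that $X_2$ determines $X_0$ enters in the identity $\Ent(X_2|X_0)=\Ent(X_2)-\Ent(X_0)$, so neither assumption can be dropped.
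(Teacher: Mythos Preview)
Your proof is correct and is essentially the same as the paper's, just organised a bit differently: the paper first reduces to $\Ent(X_{12}|X_0) \leq \Ent(X_1|X_0) + \Ent(X_2|X_0)$ and then conditions on each fibre of $X_0$ to invoke subadditivity \eqref{ent-sum-}, whereas you dispose of $X_{12}$ at the outset and reduce instead to $\Ent(X_2|X_1) \leq \Ent(X_2|X_0)$ before conditioning on the same fibres; in both cases the heart of the argument is applying \eqref{ento} inside each event $\{X_0=x_0\}$.
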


\begin{proof} By \eqref{yush}, \eqref{e-sob} it suffices to show that
$$ \Ent(X_{12}|X_0) \leq \Ent(X_1|X_0) + \Ent(X_2|X_0).$$
By \eqref{exy} it suffices to show that
$$ \Ent(X_{12}|X_0=x_0) \leq \Ent(X_1|X_0=x_0) + \Ent(X_2|X_0=x_0)$$
for all $x_0 \in \range(X_0)$.  But by hypothesis, $(X_1|X_0=x_0)$ and $(X_2|X_0=x_0)$ determine $(X_{12}|X_0=x_0)$, and the claim then follows from \eqref{ent-sum-} and \eqref{entyx}.
\end{proof}

As a special case of Lemma \ref{submodularity} (and \eqref{eo}) we see that
\begin{equation}\label{fsqueeze}
\Ent(Y|Z) \leq \Ent(X|Z)
\end{equation}
whenever $(X,Z)$ determines $Y$.  Similarly, we have
\begin{equation}\label{ent-subadd}
\Ent(X,Y|Z) \leq \Ent(X|Z) + \Ent(Y|Z)
\end{equation}
for any $X,Y,Z$, with equality if and only if $(X|Z=z)$ and $(Y|Z=z)$ are independent for all $z \in \range(Z)$, i.e. if $X$ and $Y$ are \emph{conditionally independent} relative to $Z$.

\end{document}